%% file: sls_spacetime_decay.tex
\documentclass[11pt,letterpaper]{article}
\usepackage[margin=1in]{geometry}
\usepackage[T1]{fontenc}
\usepackage{amsmath,amssymb,amsfonts,mathtools,bm}
\usepackage{amsthm}
\usepackage{mathrsfs}
\usepackage{microtype}
\usepackage{enumitem}
\usepackage{xcolor}
\usepackage{booktabs}
\usepackage{graphicx}
\usepackage{float}
\usepackage{placeins}
\usepackage{tikz}
\usepackage{cite}
\usepackage{textcomp}
\usepackage{hyperref}
\usetikzlibrary{arrows.meta,fit,positioning}
\hypersetup{
  pdftitle={Spatiotemporal Response Decay for Near-Optimal Distributed LQR via System Level Synthesis},
  pdfauthor={Chenchen Zhou and Jose Matias},
  colorlinks=true,
  linkcolor=blue!55!black,
  citecolor=blue!55!black,
  urlcolor=blue!55!black
}


\newtheorem{theorem}{Theorem}
\newtheorem{lemma}{Lemma}
\newtheorem{corollary}{Corollary}
\newtheorem{proposition}{Proposition}
\newtheorem{assumption}{Assumption}
\newtheorem{definition}{Definition}

\newcommand{\R}{\mathbb{R}}
\newcommand{\Z}{\mathbb{Z}}

\newcommand{\calF}{\mathcal{F}}
\newcommand{\calG}{\mathcal{G}}
\newcommand{\calI}{\mathcal{I}}
\newcommand{\calL}{\mathcal{L}}
\newcommand{\calM}{\mathcal{M}}
\newcommand{\calN}{\mathcal{N}}
\newcommand{\calS}{\mathcal{S}}
\newcommand{\calT}{\mathcal{T}}
\newcommand{\calV}{\mathcal{V}}
\newcommand{\calZ}{\mathcal{Z}}
\newcommand{\st}{\mathrm{st}}
\newcommand{\row}{\mathrm{row}}
\newcommand{\col}{\mathrm{col}}
\newcommand{\diag}{\operatorname{diag}}
\newcommand{\smin}{\sigma_{\min}}
\newcommand{\RH}{\mathcal{RH}}

\newcommand{\barHtwo}{\overline{\mathcal H}_2}
\newcommand{\norm}[1]{\left\lVert #1 \right\rVert}
\newcommand{\trans}{^{\mathsf T}}
\newcommand{\xbar}{\bar{\bm{x}}}
\newcommand{\bx}{\bm{x}}
\newcommand{\bu}{\bm{u}}
\newcommand{\bw}{\bm{w}}
\newcommand{\bA}{\bm{A}}
\newcommand{\bB}{\bm{B}}
\newcommand{\bC}{\bm{C}}
\newcommand{\bF}{\bm{F}}
\newcommand{\bK}{\bm{K}}
\newcommand{\bL}{\bm{L}}
\newcommand{\bP}{\bm{P}}
\newcommand{\bQ}{\bm{Q}}
\newcommand{\bR}{\bm{R}}
\newcommand{\bW}{\bm{W}}
\newcommand{\Phix}{\Phi_x}
\newcommand{\Phiu}{\Phi_u}

\begin{document}
\title{Spatiotemporal Response Decay for Near-Optimal Distributed LQR via System Level Synthesis}
\author{Chenchen Zhou and Jos\'{e} Matias\thanks{This work was supported by KU Leuven.}\\
\small Chemical and Biochemical Reactor Engineering and Safety (CREaS) group\\
\small Department of Chemical Engineering, KU Leuven\\
\small Jan Pieter de Nayerlaan 5, 2860 Sint-Katelijne-Waver, Belgium\\
\small \texttt{chenchen.zhou@kuleuven.be}; \texttt{jose.matias@kuleuven.be}}
\date{}
\maketitle

\begin{abstract}
Distributed control requires choosing how far information travels and how long it is retained. For networked linear quadratic regulation (LQR), we use System Level Synthesis to bound these resources for a specified performance loss per node relative to centralized control. For locally coupled systems under uniform regularity assumptions, including stabilizability and detectability, we prove that the centralized optimal state and input responses to disturbance impulses satisfy exponential decay bounds in both time and spatial distance. On networks with polynomial neighborhood growth, sufficient communication ranges and memory horizons grow logarithmically with the inverse tolerance, independently of network size. Truncating these responses gives local finite-memory filters with stability and performance guarantees under a computable small-gain condition. Under an additional uniform local feedback condition, we also obtain controllers that confine nominal disturbance effects in space and eliminate them in finite time. Numerical experiments illustrate how performance and disturbance containment requirements guide architecture selection.
\end{abstract}

\noindent\textbf{Keywords:}
Control architecture design, distributed control, linear quadratic regulator, localized control, response decay, System Level Synthesis.

\section{Introduction}
Large networked systems arise in power grids, transportation networks, building climate systems, and spatially distributed industrial processes.  Their controllers must coordinate physically coupled subsystems without requiring every local unit to communicate with, store, and process information from the entire network.  Centralized LQR provides a natural performance benchmark, but its optimal feedback is generally global.  A scalable design must therefore quantify how much local information and memory are sufficient to approach centralized performance, rather than only require that a controller be distributed \cite{andreasson2014distributed,patel2019economic}.

Distributed controller synthesis is difficult under arbitrary information constraints \cite{blondel2000survey}.  Tractable theories exploit spatial invariance or interconnections described by graphs \cite{bamieh2002distributed,d2003distributed,langbort2004distributed}, characterize convex information structures through quadratic or sparsity invariance \cite{rotkowitz2006characterization,lessard2016convexity,furieri2020sparsity}, or impose locality and delay directly on disturbance responses through System Level Synthesis (SLS) \cite{doyle2017tutorial,wang2019system,wang2018separable,wang2019localized,anderson2019sls}.  Regularization and joint controller and control architecture design can also select among candidate links, devices, or service options within a prescribed family \cite{matni2016regularization,zardini2021codesign,zhou2026codeca}.  These theories make structured synthesis and control architecture search tractable.  A remaining question is how to choose which signals each local controller receives and how long it retains them, while guaranteeing a prescribed performance loss relative to centralized control.  An overly restrictive choice can substantially degrade the best achievable performance \cite{kariwala2007fundamental}.

Implementing a local controller requires specifying which signals it receives, when they become available, and how long they are retained \cite{lamperski2015optimal,wang2018separable,anderson2019sls}.  The \emph{immediate communication radius} specifies how far a signal can be transmitted for use within one control update.  The \emph{maximum spatial footprint} limits where that signal may be delivered and used over subsequent updates \cite{wang2019localized,anderson2019sls}.  The \emph{memory horizon} specifies how long past signals are retained for filtering, measured in control updates.  These three quantities impose different communication and storage requirements and can be selected separately.

The exponential decay of sensitivity in optimization problems defined on graphs offers a possible route from global optimality to such local resource choices.  In optimal control, spatially distributed LQR can inherit spatial decay from the plant data \cite{motee2008optimal}.  For finite graph local LQR, the centralized optimal gain decays with graph distance, and truncating that gain can give a distributed controller with near optimal performance \cite{shin2023near}.  More recent work studies spatially decaying network LQR and predictive SLS performance as communication is enlarged \cite{zhang2025optimal,wu2025predsls}.  Gain decay quantifies how a control action weights the current global state.  Implementation resources, however, depend on how a localized disturbance propagates through state and input trajectories over both space and time.  The cited results do not give a guarantee uniform over network size that converts a prescribed LQR performance loss into separate requirements on immediate communication radius, maximum spatial footprint, and memory horizon.

This motivates the question studied here: \emph{what immediate communication radius, maximum spatial footprint, and memory horizon are sufficient to attain a prescribed performance loss per node relative to centralized LQR?}  We study the disturbance responses of the centralized optimum.  SLS represents a state feedback controller by the maps from a disturbance impulse at one node to the resulting state and input trajectories across the network.  Each impulse response coefficient represents how a disturbance at one node affects a state or input at a given node after a given number of steps.  These indices let us quantify how much of the centralized optimal response lies outside a prescribed local space and time support.

\begin{figure}[!t]
\centering
\resizebox{0.70\linewidth}{!}{\input{Figures/spacetime_decay_tikz}}
\caption{Centralized optimal state and input responses to localized disturbance impulses.  Each layer is the plant graph at one time instant.  Red and blue identify two disturbance locations, and fading intensity indicates smaller response coefficients at later times and more distant nodes.}
\label{fig:spacetime-decay-schematic}
\end{figure}

Although the centralized optimal state and input responses are generally dense, their spatial and temporal decay allows small coefficients to be discarded; Fig.~\ref{fig:spacetime-decay-schematic} illustrates this geometry.  When graph neighborhoods grow slowly enough, summing bounds on the discarded coefficients controls the total approximation error.  This connects response decay to the choice of which signals a local controller receives and how long it retains them.

Discarding coefficients generally violates the SLS equality linking the state and input responses to the plant dynamics.  We therefore develop two controller constructions.  Direct truncation implements the retained coefficients as local filters that use the specified signals and retain them for the chosen memory horizon.  The achieved closed-loop responses may extend beyond the spatial and temporal support of these filters.  Exact localized SLS reoptimizes the coefficients permitted by the chosen implementation mask subject to the SLS equality.  When feasible, the nominal response to each disturbance impulse remains within a prescribed neighborhood and vanishes after a specified horizon.  Both constructions use decay to guide resource selection; the choice also depends on whether the application requires disturbance containment.

\emph{Contributions.}
First, we establish exponential spatiotemporal decay of the centralized optimal LQR responses.  Every state and input response block decays jointly with elapsed time and with distance outside an effective coupling cone, with constants uniform over the network family (Theorem~\ref{thm:physical-light-cone}).

Second, under subexponential graph growth, we bound the accumulated magnitude and LQR energy of the discarded response coefficients (Theorem~\ref{thm:tail}).  On polynomial growth networks, these approximation bounds, together with the controller constructions below, yield a logarithmic dependence of the sufficient immediate communication radius, maximum spatial footprint, and memory horizon on the inverse tolerance for LQR loss per node.  The corresponding communication, storage, and filtering counts at each node grow polylogarithmically, with constants independent of the total network size (Corollary~\ref{cor:accuracy-resources}).  The omitted LQR energy also provides an inexpensive metric for reducing a candidate set of control architectures before localized synthesis.

Third, we establish performance guarantees uniform in network size for both constructions.  For direct truncation, a computable small gain condition certifies internal stability and yields an exponentially decaying bound on the achieved LQR cost gap per node (Theorem~\ref{thm:direct-truncation-small-gain}).  The exact guarantee uses an additional uniform local deadbeat assumption: local feedback can eliminate the effects of disturbance impulses within a uniformly bounded number of steps.  Under this assumption, a local correction restores the SLS equality within fixed spatial and temporal margins and yields an exponentially decaying bound on the exactly localized LQR cost gap per node (Theorem~\ref{thm:conic-exact-local}).  Thermal mesh experiments test these guarantees and illustrate how performance and disturbance containment requirements lead to different control architecture and implementation route choices.

\emph{Related work.}
Localized LQR optimizes performance subject to the SLS achievability equality and prescribed spatial and finite impulse response (FIR) constraints \cite{wang2019localized}.  Separable SLS decomposes such fixed-mask problems into local computations \cite{wang2018separable}.  These results provide synthesis tools once the permitted response support is specified; they do not determine its spatial and temporal scales from a prescribed performance loss.

Exact FIR localizability combines finite time elimination of disturbance effects with spatial containment and can be restrictive.  Virtually localizable and robust SLS instead allow a nonzero SLS residual and use a small gain condition to certify the resulting closed loop \cite{matni2017virtual,anderson2019sls}.  Under approximate achievability, the response achieved by the controller is generally dense.  The SLS approximation result further shows exponential convergence in the horizon when temporal decay of the centralized response is known, and outlines a spatial extension if spatial decay is assumed \cite{anderson2019sls}.  System level bases beyond FIR responses reduce the conservatism of a fixed horizon \cite{fisher2025simplepoles}.

We establish the spatial and temporal response decay needed by the SLS residual mechanism and bound the achieved LQR loss of direct truncation uniformly in network size.  Our exact result establishes when truncation residuals admit local repair within fixed margins while preserving the decay rate.  The exact guarantee uses an additional uniform local deadbeat certificate; response decay and stabilizability alone do not guarantee exact localizability.

Our decay analysis builds on exponential off-diagonal decay for inverses and functions of sparse matrices \cite{demko1984decay,benzi2007decay,benzi2013decay} and its extension to sensitivity in graph structured optimization \cite{shin2022exponential}.  Temporal decay has also been studied in long horizon optimal control \cite{xu2018exponentially}.  The closest LQR works differ in the object and scaling they characterize.  Motee and Jadbabaie develop a continuous time operator framework for infinite spatially distributed systems; the inverse closedness argument underlying the original result was subsequently qualified \cite{motee2008optimal,curtain2009comments}.  Shin et al. give a nonasymptotic decay bound for the static optimal feedback gain on finite graph local systems and analyze a gain truncation with one spatial radius \cite{shin2023near}.  Zhang et al. broaden the plant class to spatially exponentially decaying models and characterize quasi-spatial decay of both the gain and a disturbance response controller, with rates that depend on network size; Zhang et al.'s result is spatial rather than a joint law over space and time for SLS system responses \cite{zhang2025optimal}.  Predictive SLS integrates disturbance forecasts and bounds regret as a function of communication range and prediction error, rather than deriving the spatial footprint and memory of the centralized infinite horizon response \cite{wu2025predsls}.  We instead index both optimal SLS response maps by disturbance location, responding state or input location, and elapsed time, sum their discarded tails over space and time, and translate the result into separate bounds on immediate communication radius, maximum spatial footprint, and memory horizon.

Decay of distant influence also supports local computation in graph decision problems, statistical physics, and multiagent reinforcement learning \cite{gamarnik2014correlation,bandyopadhyay2008counting,qu2022scalable}; these results concern different decision variables from the SLS responses studied here.

\emph{Notation.}
Bold symbols denote stacked signals and matrices; unbolded symbols denote quantities and local blocks at one node.  For an ordered finite index set \(\calI=(i_1,\ldots,i_m)\), write \(\operatorname{col}_{i\in\calI}M_i=[M_{i_1};\cdots;M_{i_m}]\).  For any graph \(\calG=(\calV,\mathsf E_{\calG})\), \(d_{\calG}(i,j)\) is the shortest path distance, and \([a]_+:=\max\{a,0\}\) denotes the positive part of a scalar \(a\).  Unless stated otherwise, \(\norm{\cdot}\) is the induced Euclidean norm and \(\norm{\cdot}_F\) is the Frobenius norm.  Pointwise block bounds use the induced norm; with uniformly bounded local dimensions, Frobenius bounds differ only by constants.  A block matrix \(M=\{M_{ij}\}_{i,j\in\calV}\) has locality radius \(r\) on \(\calG\) if \(M_{ij}=0\) whenever \(d_{\calG}(i,j)>r\).

For a stable transfer matrix \(F(z)=\sum_{\tau\ge0}F[\tau]z^{-\tau}\), set \(\norm{F}_{\barHtwo,N}^2=N^{-1}\sum_{\tau\ge0}\norm{F[\tau]}_F^2\).  We use \(t\) for physical time and SLS response indices, reserving \(\tau\) for implementation filter taps when both appear.

\section{Networked LQR and optimal system responses}\label{sec:setup}
\subsection{Networked LQR over a graph}

We study a linear time invariant networked system in discrete time over a finite undirected graph \(\calG=(\calV,\mathsf E_{\calG})\).  Write \(N=|\calV|\), and let \(\calN_{\calG}[i]\) denote the closed neighborhood of node \(i\).  The node dynamics are
\begin{equation}\label{eq:local-plant}
  x_i(t+1)=\sum_{j\in\calN_{\calG}[i]}\left(A_{ij}x_j(t)+B_{ij}u_j(t)\right),
  \qquad i\in\calV .
\end{equation}
Here \(x_i(t)\in\R^{n_i}\) and \(u_i(t)\in\R^{m_i}\) are the state and input at node \(i\).  The matrices \(A_{ij}\in\R^{n_i\times n_j}\) and \(B_{ij}\in\R^{n_i\times m_j}\) describe the plant couplings from node \(j\) to node \(i\).

The stage performance contribution associated with node \(i\) is
\begin{equation}\label{eq:local-stage-cost}
  \begin{aligned}
  \ell_i(t):={}&
  \frac12 x_i(t)\trans
  \sum_{j\in\calN_{\calG}[i]}Q_{ij}x_j(t)\\
  &+\frac12 u_i(t)\trans
  \sum_{j\in\calN_{\calG}[i]}R_{ij}u_j(t).
  \end{aligned}
\end{equation}
Here \(Q_{ij}\in\R^{n_i\times n_j}\), \(R_{ij}\in\R^{m_i\times m_j}\), and the block symmetries \(Q_{ij}=Q_{ji}\trans\), \(R_{ij}=R_{ji}\trans\) hold.  Thus the terms outside the block diagonal in \eqref{eq:local-stage-cost} represent cost couplings between nodes.

\emph{Centralized LQR.}
The centralized LQR problem for the networked system is
\begin{equation}\label{eq:centralized-lqr}
  \begin{aligned}
  \min_{\bx,\bu}\quad &
  \frac12\sum_{t=0}^{\infty}\bx(t)\trans \bQ\bx(t)
  +\frac12\sum_{t=0}^{\infty}\bu(t)\trans \bR\bu(t)\\
  \mathrm{s.t.}\quad &
  \bx(t+1)=\bA\bx(t)+\bB\bu(t),\quad t\ge0,\\
  &\bx(0)=\bx_0 .
  \end{aligned}
\end{equation}
Here \(\bx(t):=\operatorname{col}_{i\in\calV}x_i(t)\) and \(\bu(t):=\operatorname{col}_{i\in\calV}u_i(t)\).  Also write \(\bA:=\{A_{ij}\}_{i,j\in\calV}\), \(\bB:=\{B_{ij}\}_{i,j\in\calV}\), \(\bQ:=\{Q_{ij}\}_{i,j\in\calV}\), and \(\bR:=\{R_{ij}\}_{i,j\in\calV}\), with zero blocks outside the prescribed graph couplings.  We assume \(\bQ\succeq0\) and \(\bR\succ0\).  The cost below first sums the responses to impulses in every disturbance coordinate and then normalizes by the number of nodes.

\subsection{System Level Synthesis formulation}\label{sec:sls-formulation}

To express the LQR problem in closed-loop coordinates, add the exogenous state disturbance
\begin{equation}\label{eq:plant}
  \bx(t+1)=\bA\bx(t)+\bB\bu(t)+\bw(t),
  \
  \bw(t):=\operatorname{col}_{i\in\calV}w_i(t).
\end{equation}
Let \(\Phi_x\) and \(\Phi_u\) be the closed-loop maps from \(\bw\) to \(\bx\) and \(\bu\):
\[
  \bx=\Phi_x\bw,
  \qquad
  \bu=\Phi_u\bw.
\]
Both maps are stable and strictly proper, with impulse responses
\[
  \Phix(z)=\sum_{t=1}^{\infty}\Phix[t]z^{-t},
  \qquad
  \Phiu(z)=\sum_{t=1}^{\infty}\Phiu[t]z^{-t}.
\]
The block \(\Phi_x[t]_{ij}\) maps an impulse \(w_j\) at source node \(j\) to the state \(x_i\) after \(t\) steps; \(\Phi_u[t]_{ij}\) maps the same impulse to the input \(u_i\).  Together, \((\Phi_x,\Phi_u)\) form the SLS system-response pair.  We optimize this response pair.

Stack \(\Phi:=[\Phi_x^\top\ \Phi_u^\top]^\top\) and define
\(\calZ:=\begin{bmatrix}zI-\bA&-\bB\end{bmatrix}\).  The centralized problem becomes
\begin{equation}\label{eq:sls-lqr}
\begin{aligned}
  \min_{\Phi}\quad J_N(\Phi)\
  \mathrm{s.t.}\quad \calZ\Phi=I,\
  \Phi\in z^{-1}\RH_\infty .
\end{aligned}
\end{equation}
The equality \(\calZ\Phi=I\) is the standard state-feedback SLS achievability condition: it selects exactly the state and input maps generated by internally stabilizing state feedback \cite{wang2019system,anderson2019sls}.  The objective is
\begin{equation}\label{eq:JN-phi-def}
  J_N(\Phi):=
  \frac12
  \left\|
  \begin{bmatrix}\bQ^{1/2}&0\\0&\bR^{1/2}\end{bmatrix}
  \Phi
  \right\|_{\barHtwo,N}^2 .
\end{equation}
The objective \(J_N(\Phi)\) is the weighted state and input energy generated by unit impulses in all coordinates of \(\bw\), normalized per network node.  The factor \(1/N\) permits comparison across network sizes and does not change the optimizer for fixed \(N\).  Let
$
  \Phi^\star
  :=\operatorname{col}(\Phi_x^\star,\Phi_u^\star)
$
denote an optimal solution of \eqref{eq:sls-lqr}, and write \(J_N^\star:=J_N(\Phi^\star)\).  Thus, for \(t\ge1\), \(\Phi_x^\star[t]_{ij}\) and \(\Phi_u^\star[t]_{ij}\) are the centralized optimal state and input response blocks from an impulse at node \(j\) to node \(i\) at response index \(t\).

Locality is imposed on the system-response pair \(\Phi\).  For a prescribed linear subspace \(\calS\) of admissible response blocks indexed by disturbance location, response location, and time, localized SLS uses the exact problem \cite{wang2019localized,wang2018separable}
\begin{equation}\label{eq:localized-sls-lqr-template}
\begin{aligned}
  \min_{\Phi}\quad J_N(\Phi)\
  \mathrm{s.t.}\quad \calZ\Phi=I,\
  \Phi\in z^{-1}\RH_\infty\cap\calS .
\end{aligned}
\end{equation}
For example, requiring \(\Phi_x[t]_{ij}=0\) prevents an impulse \(w_j\) from affecting \(x_i\) after \(t\) steps.  Because \eqref{eq:localized-sls-lqr-template} retains the achievability equality, every feasible point is an exact closed-loop response pair with the prescribed locality, delay, or FIR constraints.

A feasible response pair is implemented by reconstructing an internal disturbance signal and generating the control input as \cite{wang2019system,anderson2019sls}
\[
  \widehat\bw=\bx-\widehat\bx,
  \qquad
  \widehat\bx=F_x\widehat\bw,
  \qquad
  \bu=F_u\widehat\bw,
\]
where
\begin{equation}\label{eq:sls-implementation-filters}
  F_x:=z\Phi_x-I,
  \qquad
  F_u:=z\Phi_u .
\end{equation}
Here \(\widehat\bx\) and \(\widehat\bw\) are internal controller signals.  The signal \(\widehat\bx\) is the component of the current state predicted from past reconstructed disturbances, and \(\widehat\bw=\bx-\widehat\bx\) reconstructs the new disturbance entering the plant.  In this realization, the hats on \(\bx\) and \(\bw\) therefore identify internal realization signals; they do not denote truncation or an approximation of \(\Phi\).  The realization filters \(F_x\) and \(F_u\) are determined by \(\Phi\), not optimized independently.  Achievability gives \(\Phi_x[1]=I\), so \(F_x\) is strictly causal and \(F_u\) is causal.  Their tap indices satisfy \(F_x[0]=0\), \(F_x[\tau]=\Phi_x[\tau+1]\) for \(\tau\ge1\), and \(F_u[\tau]=\Phi_u[\tau+1]\).

\section{Spatiotemporal decay of centralized optimal responses}\label{sec:response-decay}

Consider the centralized optimal response to a disturbance localized at node \(j\).  With \(t\) denoting time along the corresponding LQR trajectory, the state and input responses at node \(i\) are \(\Phi_x^\star[t+1]_{ij}\) and \(\Phi_u^\star[t+1]_{ij}\).  The shift by one follows from the strictly proper response convention in Section~\ref{sec:setup}.  We show that these blocks decay exponentially with elapsed time and with distance beyond an effective coupling cone determined by the local dynamics and cost couplings.  The decay constants remain independent of the network size under the following assumptions.

The first assumption fixes the spatial range of the plant and cost couplings and bounds the local block dimensions and matrix norms independently of the network size.

\begin{assumption}[Graph locality and uniform bounds]\label{ass:lqr-local}
There exist constants \(L>1\), \(\gamma\in(0,1)\), and integers \(r_0,d_{\rm blk}\ge1\), not depending on the network size \(N\), such that:
\begin{enumerate}[label=(\alph*)]
  \item The local state and input dimensions satisfy \(n_i,m_i\le d_{\rm blk}\) for all \(i\in\calV\).
  \item The matrices \(\bA,\bB,\bQ,\bR\) are local within radius \(r_0\) on \(\calG\); that is, their \((i,j)\) blocks vanish whenever \(d_{\calG}(i,j)>r_0\).  Moreover, \(\norm{\bA},\norm{\bB},\norm{\bQ},\norm{\bR}\le L\).
  \item \(\bQ\succeq0\) and \(\bR\succeq\gamma I\).
\end{enumerate}
\end{assumption}

Graph locality alone does not prevent stabilizability and detectability from deteriorating as the network grows.  We therefore require both properties uniformly over the network family.
\begin{assumption}[Uniform stabilizability and detectability]\label{ass:lqr-regularity}
With \(L\) as in Assumption~\ref{ass:lqr-local}, there exists \(\alpha\in(0,1)\), not depending on the network size \(N\), such that:
\begin{enumerate}[label=(\alph*)]
  \item There is a static stabilizing state feedback gain \(\bK_s\) with \(\norm{\bK_s}\le L\) and
  \begin{equation}\label{eq:uniform-stab-gain}
    \norm{(\bA-\bB\bK_s)^t}\le L\alpha^t,
    \qquad t\ge0 .
  \end{equation}
  \item With \(\bC_Q:=\bQ^{1/2}\), there is a static detectability gain \(\bL_s\) with \(\norm{\bL_s}\le L\) and
  \begin{equation}\label{eq:uniform-det-gain}
    \norm{(\bA-\bL_s\bC_Q)^t}\le L\alpha^t,
    \qquad t\ge0 .
  \end{equation}
\end{enumerate}
\end{assumption}

The conditions \(\bQ\succeq0\), \(\bR\succ0\), stabilizability, and detectability are standard in LQR \cite{bertsekas2012dynamic}.  Their uniform versions over a family of systems structured by graphs are used to obtain LQR decay and performance bounds that do not deteriorate with the network size \cite{shin2023near}.  Assumption~\ref{ass:lqr-local} additionally fixes the coupling range and local block dimensions, whereas Assumption~\ref{ass:lqr-regularity} makes the stabilizability and detectability bounds uniform.  Together, they provide the uniform locality and regularity required for the decay result below.  The gains \(\bK_s\) and \(\bL_s\) are auxiliary certificates for the system family; they are not subject to the localized architecture constraints introduced later.

To state the centralized optimal response decay in physical graph coordinates, let \(v_{\rm cone}\ge1\) bound the graph distance spanned by any nonzero block of \(\bA,\bB,\bQ\), or \(\bR\).  We call \(d_{\calG}(i,j)\le v_{\rm cone}t\) the effective coupling cone.  Under Assumption~\ref{ass:lqr-local}, one may take \(v_{\rm cone}=r_0\).

The following theorem is the main structural result.  It bounds every centralized optimal response block directly in physical distance and time.
\begin{theorem}[Physical light cone response decay]\label{thm:physical-light-cone}
Under Assumptions~\ref{ass:lqr-local} and~\ref{ass:lqr-regularity}, there exist constants \(C_{\rm lc}<\infty\), \(\rho_{\rm lc}\in(0,1)\), and \(c_{\rm lc}>0\), independent of \(N\), such that
\begin{equation}\label{eq:physical-light-cone-decay}
  \norm{\Phix^\star[t+1]_{ij}}+\norm{\Phiu^\star[t+1]_{ij}}
  \le
  C_{\rm lc}\,
  \rho_{\rm lc}^{\,t+c_{\rm lc}[d_{\calG}(i,j)-v_{\rm cone}t]_+}
\end{equation}
for all \(i,j\in\calV\) and all \(t\ge0\).
\end{theorem}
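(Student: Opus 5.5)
The plan is to represent the centralized optimum explicitly and prove the spatial and temporal factors separately, then combine them by a geometric mean. Under Assumptions~\ref{ass:lqr-local} and~\ref{ass:lqr-regularity}, the stabilizing solution \(\bP\) of the discrete algebraic Riccati equation exists. The optimal gain is \(\bK^\star=(\bR+\bB\trans\bP\bB)^{-1}\bB\trans\bP\bA\), and the optimal responses are
\[
\Phix^\star[t+1]=\bA_{\rm cl}^t,\qquad \Phiu^\star[t+1]=-\bK^\star\bA_{\rm cl}^t,\qquad \bA_{\rm cl}:=\bA-\bB\bK^\star .
\]
This holds because the SLS optimum of \eqref{eq:sls-lqr} with the \(\barHtwo\) objective coincides with the LQR state feedback.

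\textbf{Temporal factor.} First I will show that there are constants \(C_1\) and \(\rho_1<1\), independent of \(N\), with
\[
\norm{\bA_{\rm cl}^t}+\norm{\bK^\star\bA_{\rm cl}^t}\le C_1\rho_1^t .
\]
The stabilizing certificate \(\bK_s\) bounds \(\bP\preceq \bar p I\) uniformly, with \(\bar p\) depending only on \(L,\alpha,\gamma\). This follows by comparing against the cost of \(\bK_s\). The detectability certificate \(\bL_s\), via the standard observability-from-detectability argument, gives a uniform lower bound along closed-loop trajectories. Concretely, \(\bx\trans\bP\bx\) is a Lyapunov function that decreases by \(\bx\trans(\bQ+\bK^{\star\trans}\bR\bK^\star)\bx\), and uniform detectability forces this decrease to accumulate to a fixed fraction of \(\norm{\bx}^2\) over a fixed window. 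This yields \(\norm{\bA_{\rm cl}^t}\le C\rho^t\), with \(\bar p\) bounding \(\norm{\bK^\star}\). This is the analysis of \cite{shin2023near}, which I would invoke rather than redo.

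\textbf{Spatial factor.} Next I will show a bound of the form \(\norm{[\bA_{\rm cl}^t]_{ij}}\le C_2\rho_2^{d_{\calG}(i,j)}\theta^t\) for some \(\theta\ge1\). The first step is exponential off-diagonal decay of \(\bP\) and \(\bK^\star\), namely \(\norm{\bK^\star_{ij}}\le C_K\rho_K^{d_{\calG}(i,j)}\). I would obtain this by viewing \(\bP\) as the limit of the Riccati recursion, or via the block-tridiagonal KKT system of a long-horizon LQR. The KKT matrix has locality radius \(r_0\) in space and radius \(1\) in time, and its inverse is uniformly bounded by the regularity above. The Demko-type decay results for inverses of banded matrices with bounded condition number \cite{demko1984decay,shin2022exponential} then give decay in the product graph metric. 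Taking the horizon to infinity yields decay of \(\bK^\star\).

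Actually, the same KKT argument on the product graph of space and time directly gives joint decay of the trajectory response \((x_i(t),u_i(t))\) to an impulse at \((j,0)\). The bound is \(C\rho^{\,\max\{t,\,d_{\calG}(i,j)/r_0\}}\)-type decay, namely \(C\rho^{d_{\rm st}}\) with \(d_{\rm st}\) the product distance, since the impulse response is a column of the KKT inverse. I would pursue this route as primary.

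\textbf{Combining.} Given temporal decay \(\rho_1^t\) and spatial decay \(\rho_K^{d}\), I interpolate. Because the couplings of \(\bA\), \(\bB\), and \(\bK^\star\) are essentially local, the response is dominated outside the cone \(d\le v_{\rm cone}t\) by the tails of \(\bK^\star\). Splitting \(\bK^\star\) into its \(r_0\)-banded truncation plus a tail, and expanding \(\bA_{\rm cl}^t\) binomially, gives decay in \([d-v_{\rm cone}t]_+\). An alternative is to take the geometric mean of the temporal bound and the product-distance bound, which produces \(\rho^{t+c[d-v_{\rm cone}t]_+}\) after adjusting constants.

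\textbf{Main obstacle.} The step I expect to be hardest is obtaining a spatial rate independent of \(N\) with the exact cone speed \(v_{\rm cone}=r_0\), since \(\bK^\star\) is dense. I will accept a constant \(c_{\rm lc}\) small enough to absorb this. The key point is that the exponent \(c_{\rm lc}[d-r_0t]_+\) only needs some positive slope, which product-metric Demko decay supplies.
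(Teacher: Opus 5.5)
Your primary route (uniform KKT conditioning, Demko-type inverse decay on the space--time KKT graph, the impulse response as a column of the KKT inverse, and the path-length bound $d_{\rm st}\ge\max\{t,d_{\calG}(i,j)/r_0\}=t+[d_{\calG}(i,j)-r_0t]_+/r_0$) is essentially the paper's proof and already gives \eqref{eq:physical-light-cone-decay} with $c_{\rm lc}=1/v_{\rm cone}$, so the separate temporal factor and the combining step are unnecessary (the $\bK^\star$-truncation/binomial variant would also have to sum entrywise bounds over intermediate nodes, and this theorem assumes no graph-growth bound to control such sums). The only real difference is how you handle the horizon: the paper keeps the Riccati terminal cost so the finite-horizon optimizer is exact, and uses a terminal separation lemma so the dense $\bP^\star$ block cannot shorten KKT distances; your limit $T_{\rm h}\to\infty$ with a purely local KKT matrix also works, because the paper's conditioning argument (right inverse from $\bK_s$, nullspace coercivity from $\bL_s$) needs only a nonnegative terminal weight, and for each fixed $N$ the finite-horizon optimizers converge to the infinite-horizon response.
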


A proof is given in Appendix~\ref{app:kkt-decay-proof}.  The exponent in \eqref{eq:physical-light-cone-decay} separates two effects.  The term \(t\) gives temporal decay, while \([d_{\calG}(i,j)-v_{\rm cone}t]_+\) measures how far node \(i\) lies beyond the effective coupling cone at time \(t\).  Thus, although the centralized optimal response may be dense, its coefficients become exponentially small with elapsed time and with distance outside that cone.  For nearest neighbor couplings, \(v_{\rm cone}=1\), so the cone expands by one graph hop per time step.

\section{From optimal response decay to control architecture selection}\label{sec:architecture-selection}

\subsection{Spatiotemporal control architecture}

To connect the spatiotemporal decay in Theorem~\ref{thm:physical-light-cone} with controller implementation, we first specify which realization filter blocks \(F_{x,ij}[\tau]\) and \(F_{u,ij}[\tau]\) may be nonzero as a function of graph distance and tap index.  Operationally, node \(j\) forms its local disturbance estimate \(\widehat w_j\) and attaches a time stamp.  A permitted nonzero block \(F_{x,ij}[\tau]\) or \(F_{u,ij}[\tau]\) requires that estimate to reach node \(i\) no later than tap \(\tau\).  The permitted blocks indexed by disturbance node \(j\), implementation node \(i\), and tap \(\tau\) therefore specify the message schedule and local filtering work of the realization.

\begin{figure}[!t]
\centering
\resizebox{0.68\linewidth}{!}{%
\input{Figures/three_parameter_architecture_tikz.tex}
}
\caption{Implementation tap mask \(\calM_\theta\) induced by \(\theta=(\kappa,\bar\kappa,T)\) on a path.  At tap \(\tau=0\), a disturbance estimate may reach nodes within \(\kappa\) hops.  The permitted reach expands by \(v_{\rm cone}\) hops per sample until it reaches \(\bar\kappa\), and all coefficients vanish after tap \(T\).}
\label{fig:three-parameter-architecture}
\end{figure}

We parameterize the control architecture for localized SLS by \(\theta=(\kappa,\bar\kappa,T)\): the immediate communication radius, maximum spatial footprint, and memory horizon.  These parameters constrain individual filter taps, not the cumulative influence of an external disturbance through successive feedback updates.  Let
\[
  \Theta:=\{(\kappa,\bar\kappa,T)\in\Z_{\ge0}^3:0\le\kappa\le\bar\kappa\}
\]
and define the permitted distance at tap \(\tau\) by
\begin{equation}\label{eq:dmax-theta-def}
  d_{\max}(\tau):=\min\{\bar\kappa,\kappa+v_{\rm cone}\tau\}.
\end{equation}
The allowed tap set is
\begin{equation}\label{eq:M-kappa-kbar-T-def}
  \calM_\theta:=
  \{(\tau,i,j):0\le\tau\le T,\ d_{\calG}(i,j)\le d_{\max}(\tau)\}.
\end{equation}
For example, on a nearest neighbor path with \(v_{\rm cone}=1\), \(\theta=(1,2,2)\) permits distance one at tap zero, distance two at taps one and two, and no later coefficients; Fig.~\ref{fig:three-parameter-architecture} shows the resulting allowed triples in \(\calM_\theta\).

For a stable causal transfer matrix \(G(z)=\sum_{\tau\ge0}G[\tau]z^{-\tau}\), write \(G\in\calS_\theta^{\rm filt}\) if
\begin{equation}\label{eq:impl-support-def}
  G_{ij}[\tau]=0
  \qquad\text{whenever}\qquad
  (\tau,i,j)\notin\calM_\theta .
\end{equation}
The implementation tap mask induces the response subspace
\begin{equation}\label{eq:stacked-impl-support-def}
  \Phi\in\calS_\theta^{\rm impl}
  \quad\Longleftrightarrow\quad
  z\Phi_x-I\in\calS_\theta^{\rm filt},
  \quad z\Phi_u\in\calS_\theta^{\rm filt}.
\end{equation}
Choosing \(\calS=\calS_\theta^{\rm impl}\) in \eqref{eq:localized-sls-lqr-template} restricts exact SLS synthesis to \(\Phi\in\calS_\theta^{\rm impl}\).  The implementation tap mask \(\calM_\theta\) specifies which realization filter blocks may be nonzero; when the constrained problem is feasible, synthesis chooses their values.

The three components of \(\theta\) determine local resources.  Define the average immediate halo, maximum footprint, and stored tap count by
\begin{align}
  C_{\rm halo}(\kappa)
  &:=\frac1N\sum_{i\in\calV}|\{j:d_{\calG}(i,j)\le\kappa\}|,\label{eq:C-halo-def}\\
  C_{\rm foot}(\bar\kappa)
  &:=\frac1N\sum_{i\in\calV}|\{j:d_{\calG}(i,j)\le\bar\kappa\}|,\label{eq:C-foot-def}\\
  C_{\rm tap}(\theta)
  &:=\frac1N\sum_{i\in\calV}\sum_{\tau=0}^{T}
  |\{j:d_{\calG}(i,j)\le d_{\max}(\tau)\}|.\label{eq:C-tap-def}
\end{align}
The counts measure permitted filter-block locations; memory and arithmetic also depend on local block dimensions.  Before synthesis, the timing parameters must satisfy measured or guaranteed communication and computation deadlines.  Fixed multi-sample delays are represented by the implementation tap mask; jitter or packet loss requires conservative deadlines or an additional network uncertainty model.

\subsection{Accumulated response tails and control architecture selection}

Theorem~\ref{thm:physical-light-cone} bounds one response block at one node and time.  To measure all centralized response coefficients excluded by \(\calM_\theta\), we first define the corresponding truncation.  The mask constrains the realization filter taps \(z\Phi_x-I\) and \(z\Phi_u\), rather than \(\Phi\) directly, so the truncation must preserve the leading state response coefficient \(\Phi_x[1]=I\).  For \(\theta=(\kappa,\bar\kappa,T)\in\Theta\) and any stable transfer matrix \(G(z)=\sum_{\tau\ge0}G[\tau]z^{-\tau}\), define the projection onto \(\calM_\theta\) by
\begin{equation}\label{eq:tap-projection-def}
  [\Pi_{\calM_\theta}G]_{ij}[\tau]
  :=
  \begin{cases}
  G_{ij}[\tau],&(\tau,i,j)\in\calM_\theta,\\
  0,&\text{otherwise}.
  \end{cases}
\end{equation}
The truncation of an SLS response pair \(\Phi\) induced by \(\calM_\theta\) is
\begin{equation}\label{eq:impl-truncation-def}
  \widehat\Phi_\theta
  :=\calT_\theta^{\rm impl}\Phi
\end{equation}
as the unique stacked transfer matrix satisfying
\begin{equation}\label{eq:impl-truncation-filters}
\begin{aligned}
  (\widehat\Phi_\theta)_x[1]=I,\qquad
  z(\widehat\Phi_\theta)_x-I
  &=
  \Pi_{\calM_\theta}(z\Phi_x-I),\\
  z(\widehat\Phi_\theta)_u
  &=
  \Pi_{\calM_\theta}(z\Phi_u).
\end{aligned}
\end{equation}
Then \(\widehat\Phi_\theta\in\calS_\theta^{\rm impl}\), and \(\Phi-\widehat\Phi_\theta\) contains the response coefficients whose corresponding realization filter taps lie outside \(\calM_\theta\).  We measure the omitted centralized optimal response using an accumulated row norm and an energy norm weighted by the LQR objective.

For a block transfer matrix \(F\), define
\[
\norm{F}_{\calL_1,\row}
:=\max_a\sum_{\tau\ge0}\sum_b\norm{F[\tau]_{ab}}_F,
\]
and define \(\norm{F}_{\calL_1,\col}\) by interchanging rows and columns.  For a static matrix, omit the time sum.  Thus the row norm is the largest total magnitude received by one block row, while the column norm is the largest total magnitude generated by one source block.
\begin{equation}\label{eq:e-tail-def}
\begin{aligned}
  \widehat\Phi_\theta^{\star}
  &:=
  \calT_\theta^{\rm impl}\Phi^\star,
  \\
  E_{\rm row}(\theta)
  &:=
  \norm{\Phi^\star-\widehat\Phi_\theta^{\star}}_{\calL_1,\row},
  \\
  E_{\rm tail}(\theta)
  &:=
  \norm{\bW(\Phi^\star-\widehat\Phi_\theta^{\star})}_{\barHtwo,N},
\end{aligned}
\end{equation}
where $\bW:=\diag(\bQ^{1/2},\bR^{1/2})$.
The row tail \(E_{\rm row}(\theta)\) is the largest accumulated magnitude in any block row of the omitted response and controls the affine residual used by the direct route.  The LQR weighted response tail \(E_{\rm tail}(\theta)\) measures the omitted response energy before a controller realization is constructed; it is not the cost gap of the achieved closed loop.

Both metrics accumulate coefficients over response locations and filter taps.  The pointwise decay in Theorem~\ref{thm:physical-light-cone} therefore yields bounds independent of \(N\) only if the number of response locations at each distance is controlled uniformly over the graph family.
\begin{assumption}[Subexponential graph growth]\label{ass:growth}
There exists a subexponential function \(p:\Z_{\ge0}\to\R_{>0}\), independent of \(N\), such that
\begin{equation}\label{eq:growth}
  \left|\{j\in\calV:d_{\calG}(i,j)=d\}\right|\le p(d),
  \qquad i\in\calV,\ d\ge0 .
\end{equation}
Equivalently, for every \(\nu\in(0,1)\), \(\sup_{d\ge0}p(d)\nu^d<\infty\).
\end{assumption}

Assumption~\ref{ass:growth} is a standard sufficient condition for converting pointwise spatial decay into uniform row and column tails \cite{shin2023near}.  Fixed-dimensional grids satisfy the assumption because the number of nodes at distance \(d\) grows polynomially.  A family of regular trees does not satisfy it because the shell size grows exponentially, although a rate-dependent tail bound may still hold when the response decay dominates the branching rate.

The following theorem converts the pointwise response decay into uniform bounds for the two accumulated tail metrics.

\begin{theorem}[Response tail bound for the three parameter implementation tap mask]\label{thm:tail}
Under Assumptions~\ref{ass:lqr-local}, \ref{ass:lqr-regularity}, and~\ref{ass:growth}, let the masks \(\{\calM_\theta\}_{\theta\in\Theta}\) use the cone speed \(v_{\rm cone}\) from Theorem~\ref{thm:physical-light-cone}.  Then there exist finite constants \(C_{\rm row}>0\) and \(C_{\rm tail}>0\), and \(\delta_\kappa,\delta_{\bar\kappa},\delta_T\in(0,1)\), independent of \(N\) and \(\theta\), such that every \(\theta=(\kappa,\bar\kappa,T)\in\Theta\) satisfies
\begin{equation}\label{eq:row-tail-impl-kbarT-main}
  E_{\rm row}(\theta)
  \le
  C_{\rm row}
  \left(
  \delta_\kappa^\kappa
  +\delta_{\bar\kappa}^{\bar\kappa}
  +\delta_T^T
  \right),
\end{equation}
\begin{equation}\label{eq:weighted-tail-impl-kbarT}
  E_{\rm tail}(\theta)
  \le
  C_{\rm tail}
  \left(
  \delta_\kappa^\kappa
  +\delta_{\bar\kappa}^{\bar\kappa}
  +\delta_T^T
  \right).
\end{equation}
\end{theorem}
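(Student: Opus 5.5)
The plan is to reduce both tail metrics to sums of the pointwise bound of Theorem~\ref{thm:physical-light-cone} over the complement of \(\calM_\theta\), and then to split that complement into three geometric regions, each producing one of the three exponential terms.

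\emph{Identifying the omitted coefficients.} By \eqref{eq:impl-truncation-filters}, the omitted response \(\Phi^\star-\widehat\Phi^\star_\theta\) has nonzero block at response index \(\tau+1\), position \((i,j)\), only if \((\tau,i,j)\notin\calM_\theta\). There the block equals \(\Phi^\star_x[\tau+1]_{ij}\) or \(\Phi^\star_u[\tau+1]_{ij}\). The leading coefficient \(\Phi_x[1]=I\) is preserved and never enters the tail. Writing \(d=d_{\calG}(i,j)\), Theorem~\ref{thm:physical-light-cone} bounds each omitted block by \(C_{\rm lc}\rho^{\tau+c[d-v\tau]_+}\), with \(\rho=\rho_{\rm lc}\), \(c=c_{\rm lc}\), \(v=v_{\rm cone}\). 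Frobenius norms cost only a factor \(\sqrt{d_{\rm blk}}\) by Assumption~\ref{ass:lqr-local}(a).

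\emph{Splitting the complement of \(\calM_\theta\).} The triple \((\tau,i,j)\) is excluded iff \(\tau>T\) or \(d>d_{\max}(\tau)\). I would bound its contribution to any fixed row \(a=i\) by a sum of three pieces.

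\textbf{(A) The region \(\tau>T\).} Using \(\rho^{c[\cdot]_+}\le1\), shells give \(\sum_{\tau>T}\rho^\tau\sum_d p(d)\rho^{c[d-v\tau]_+}\). The inner sum is at most \(\sum_{d\le v\tau}p(d)+\sum_{s\ge0}p(v\tau+s)\rho^{cs}\). Subexponential growth gives \(p(d)\le C_\epsilon(1+\epsilon)^d\) for any \(\epsilon>0\). Choosing \((1+\epsilon)^{v}\rho<1\) yields a bound \(C\delta_T^T\) with \(\delta_T=(1+\epsilon)^v\rho\).

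\textbf{(B) The region \(\tau\le T\) with \(d>\bar\kappa\ge \kappa+v\tau\), i.e.\ \(d>\kappa+v\tau\).} Here \(d-v\tau>\kappa\). Summing \(\rho^\tau\sum_{d>\kappa+v\tau}p(d)\rho^{c(d-v\tau)}\) over all \(\tau\), I substitute \(s=d-v\tau>\kappa\) and use \(p(v\tau+s)\le C_\epsilon(1+\epsilon)^{v\tau+s}\). This gives \(C\delta_\kappa^\kappa\) with \(\delta_\kappa=(1+\epsilon)\rho^c\), provided \(\epsilon\) is small enough that both \((1+\epsilon)^v\rho<1\) and \((1+\epsilon)\rho^c<1\).

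\textbf{(C) The region \(\tau\le T\) with \(d>\bar\kappa\) and \(\bar\kappa<\kappa+v\tau\).} This is the main obstacle, because within the cone the spatial factor gives no decay and the decay must come from time. The approach is to split on \(\tau\). If \(v\tau\le\bar\kappa/2\), then \(d-v\tau\ge\bar\kappa/2\), and the spatial factor gives \(\rho^{c(\bar\kappa/2)}\) times a summable remainder, as in (B). If \(v\tau>\bar\kappa/2\), the temporal factor \(\rho^\tau\le\rho^{\bar\kappa/(2v)}\) takes over, and the spatial sum is handled as in (A). Together these give \(C\delta_{\bar\kappa}^{\bar\kappa}\) with \(\delta_{\bar\kappa}=\max\{\rho^{c/2},\rho^{1/(2v)}\}\cdot(1+\epsilon)^{O(1)}<1\). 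Regions (B) and (C) cover every excluded triple with \(\tau\le T\), since \(d>d_{\max}(\tau)\) means \(d\) exceeds the smaller of \(\bar\kappa\) and \(\kappa+v\tau\). All constants depend only on \(C_{\rm lc},\rho,c,v,p,d_{\rm blk}\), hence not on \(N\) or \(\theta\). This proves \eqref{eq:row-tail-impl-kbarT-main}.

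\emph{Energy tail.} For \eqref{eq:weighted-tail-impl-kbarT} I would bound \(\norm{\bW}\le\max\{L^{1/2},L^{1/2}\}\) and use \(\norm{\bW\Delta}_{\barHtwo,N}^2\le L\,N^{-1}\sum_\tau\norm{\Delta[\tau]}_F^2\), where \(\Delta:=\Phi^\star-\widehat\Phi^\star_\theta\). Then I sum over source columns \(j\): \(\sum_\tau\norm{\Delta[\tau]}_F^2=\sum_j\sum_{\tau,i}\norm{\Delta[\tau]_{ij}}_F^2\). For each \(j\), the inner sum is at most \((\max\text{ block})\cdot(\text{column }L_1\text{ sum})\). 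The column sum obeys the same bound as the row sum, because the light-cone bound and the growth assumption are symmetric in \(i,j\). Hence the total is at most \(N\cdot C\cdot(\delta_\kappa^\kappa+\delta_{\bar\kappa}^{\bar\kappa}+\delta_T^T)\), where the maximum block is bounded by \(C_{\rm lc}\). The \(1/N\) normalization cancels the factor \(N\), and taking a square root gives the three terms raised to power \(1/2\). Redefining each \(\delta\) as its square root and using \(\sqrt{a+b+c}\le\sqrt a+\sqrt b+\sqrt c\) gives the stated form with a common set of \(\delta\)'s, taking the maximum of the row and energy rates.
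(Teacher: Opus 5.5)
Your proof is correct and follows the same overall strategy as the paper. You bound each discarded block by the light-cone estimate of Theorem~\ref{thm:physical-light-cone}, cover the complement of $\calM_\theta$ by the sets $\tau>T$, $d>\bar\kappa$, and $d>\kappa+v_{\rm cone}\tau$, and sum using subexponential growth. The differences are in bookkeeping and in the energy step.

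\textbf{Row tail.} The paper uses $c_{\rm lc}=1/v_{\rm cone}$, so the exponent is the single index $s(\tau,d)=\max\{\tau,d/v_{\rm cone}\}$. Each exclusion condition then gives $s>T$, $s>\bar\kappa/v_{\rm cone}$, or $s>\kappa/v_{\rm cone}$ at once. One shell-summability estimate (Lemma~\ref{lem:row-slice-tail-from-growth}) finishes all three cases together. Your half-split in region (C) is therefore unnecessary with that constant. It is, however, a valid way to handle a generic $c_{\rm lc}$.

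\textbf{Energy tail.} This is where your route genuinely differs. The paper bounds each block row's sum of squared Frobenius norms by the square of its $\calL_1$ sum. With two row types per node, this gives $E_{\rm tail}(\theta)\le\sqrt{2}\,\norm{\bW}\,E_{\rm row}(\theta)$, with the same rates as the row tail. Your max-block-times-column-sum estimate controls only $E_{\rm tail}^2$ linearly, so it yields square-root rates. That still proves the theorem as stated, because the rates are existential and you re-choose a common set. The row-based bound is shorter and sharper, and it avoids the detour through the column tail.

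\textbf{Wording in region (A).} You cannot literally use $\rho^{c_{\rm lc}[d-v_{\rm cone}\tau]_+}\le 1$ for all $d$. The resulting $\sum_d p(d)$ is not bounded uniformly in $N$. Your displayed split, which keeps the spatial factor for $d>v_{\rm cone}\tau$, is the correct step.
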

The proof is in Appendix~\ref{app:mask-tail-bounds}.  A coefficient is removed for one of three reasons: it lies beyond the cone set by the immediate communication radius \(\kappa\), outside the maximum spatial footprint \(\bar\kappa\), or after the memory horizon \(T\).  The physical light cone response bound \eqref{eq:physical-light-cone-decay} makes each discarded set exponentially small, and Assumption~\ref{ass:growth} prevents the number of discarded blocks from cancelling this decay.  Thus \(\kappa\), \(\bar\kappa\), and \(T\) control the tails beyond the cone, beyond the maximum footprint, and after the memory horizon, respectively.  The bounds \eqref{eq:row-tail-impl-kbarT-main} and \eqref{eq:weighted-tail-impl-kbarT} quantify approximation error; the truncated response need not satisfy the SLS achievability equality.

For a prescribed centralized response tail tolerance, choosing \(\kappa\), \(\bar\kappa\), and \(T\) proportional to \(\log(1/\varepsilon)\) makes \(E_{\rm tail}(\theta)\le\varepsilon\), with constants independent of \(N\).  For the explicit resource count, suppose that there exist constants \(C_{\rm gr}\ge1\) and \(D\ge0\), independent of \(N\), such that every graph ball of radius \(r\) contains at most \(C_{\rm gr}(1+r)^D\) nodes.  The corresponding resource growth is
\[
C_{\rm halo},C_{\rm foot}=O(\log^D(1/\varepsilon)),
\qquad
C_{\rm tap}=O(\log^{D+1}(1/\varepsilon)),
\]
with constants independent of \(N\).

\section{Localized realizations and near-optimality}\label{sec:exact-local}

For a chosen implementation tap mask, we establish stability and performance guarantees for direct truncation and exact localized SLS.

\subsection{Direct local FIR implementation}

Let \(\widehat\Phi_\theta^{\star}:=\calT_\theta^{\rm impl}\Phi^\star\) be the truncated centralized optimal response defined in \eqref{eq:impl-truncation-def}--\eqref{eq:impl-truncation-filters}.  Its realization filters satisfy the implementation tap mask \(\calM_\theta\), but \(\widehat\Phi_\theta^{\star}\) generally violates the SLS achievability equality by
\begin{equation}\label{eq:exact-section-conic-residual}
  \Delta_\theta
  :=
  \calZ\widehat\Phi_\theta^{\star}-I
  =
  -\calZ(\Phi^\star-\widehat\Phi_\theta^{\star}) .
\end{equation}
For numerical certification, define the computable residual bound
\begin{equation}\label{eq:gamma-theta-def}
  \gamma_\theta
  :=
  \left(
  \norm{\Delta_\theta}_{\calL_1,\row}
  \norm{\Delta_\theta}_{\calL_1,\col}
  \right)^{1/2}
  \ge \norm{\Delta_\theta}_{\mathcal H_\infty}.
\end{equation}
The inequality is the standard induced norm bound obtained from the maximum block row and column sums.  Because \(\Delta_\theta\) is stable and FIR, \(\gamma_\theta<1\) implies
\((I+\Delta_\theta)^{-1}=\sum_{k=0}^{\infty}(-\Delta_\theta)^k\in\RH_\infty\).
It therefore gives a sufficient, directly computable test for internal stability of the direct SLS realization.  The stronger uniform condition \(\gamma_\theta\le\alpha_{\rm sg}<1\) implies the hypothesis of Theorem~\ref{thm:direct-truncation-small-gain}.
The direct controller uses the disturbance-based realization in Section~\ref{sec:sls-formulation} with the truncated filters in \eqref{eq:impl-truncation-filters}.
Whenever \(I+\Delta_\theta\) is stably invertible, the direct realization achieves the closed-loop response pair
\begin{equation}\label{eq:direct-truncation-cl-response}
  \widehat\Phi_\theta^{\rm cl}
  :=
  \widehat\Phi_\theta^{\star}
  (I+\Delta_\theta)^{-1}.
\end{equation}
The implemented filters satisfy \(\calM_\theta\), but \eqref{eq:direct-truncation-cl-response} generally has spatial and temporal tails.  The inverse \((I+\Delta_\theta)^{-1}\) describes the closed loop; it is not an additional filter requiring global communication.  Physical coupling and repeated local feedback propagate disturbance effects through the existing connections.  In particular, the internal signal \(\widehat\bw\) can contain effects of earlier disturbances, so using only taps up to \(T\) does not imply that an external disturbance disappears after that horizon.

The following theorem converts a computable truncation-residual bound into internal stability and an LQR cost guarantee for the actual achieved response.
\begin{theorem}[Achieved response gap under direct truncation]\label{thm:direct-truncation-small-gain}
Under Assumptions~\ref{ass:lqr-local}, \ref{ass:lqr-regularity}, and~\ref{ass:growth}, let the implementation tap mask \(\calM_\theta\) use the cone speed \(v_{\rm cone}\) from Theorem~\ref{thm:physical-light-cone}.  Fix a small gain margin \(\alpha_{\rm sg}\in(0,1)\).  Then there exist \(C_{\rm cl}<\infty\) and \(\delta_\kappa,\delta_{\bar\kappa},\delta_T\in(0,1)\), independent of \(N\) and \(\theta\), such that every \(\theta=(\kappa,\bar\kappa,T)\in\Theta\) with
\(\norm{\Delta_\theta}_{\mathcal H_\infty}\le\alpha_{\rm sg}\) has an internally stable direct truncated realization.  Its achieved response \(\widehat\Phi_\theta^{\rm cl}\) satisfies
\begin{equation}\label{eq:direct-truncation-exp-gap}
  0
  \le
  J_N(\widehat\Phi_\theta^{\rm cl})-J_N^\star
  \le
  C_{\rm cl}
  \left(
  \delta_\kappa^{2\kappa}
  +\delta_{\bar\kappa}^{2\bar\kappa}
  +\delta_T^{2T}
  \right).
\end{equation}
\end{theorem}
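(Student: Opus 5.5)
Internal stability comes from the small-gain hypothesis. The cost gap then follows from the first-order optimality of \(\Phi^\star\): the gap is quadratic in the perturbation \(\widehat\Phi_\theta^{\rm cl}-\Phi^\star\), whose weighted norm we bound by the tails of Theorem~\ref{thm:tail}.

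\emph{Stability.} Since \(\Delta_\theta\) is FIR and \(\norm{\Delta_\theta}_{\mathcal H_\infty}\le\alpha_{\rm sg}<1\), the Neumann series gives \((I+\Delta_\theta)^{-1}\in\RH_\infty\) with \(\norm{(I+\Delta_\theta)^{-1}}_{\mathcal H_\infty}\le(1-\alpha_{\rm sg})^{-1}\). The standard robust SLS result \cite{matni2017virtual,anderson2019sls} then applies. For a stable, strictly proper \(\widehat\Phi\) with \(\calZ\widehat\Phi=I+\Delta\) and \((I+\Delta)^{-1}\) stable, the realization \eqref{eq:sls-implementation-filters} is internally stabilizing and achieves \(\widehat\Phi(I+\Delta)^{-1}\). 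This gives \eqref{eq:direct-truncation-cl-response}. The achieved pair satisfies \(\calZ\widehat\Phi_\theta^{\rm cl}=I\), so it is feasible for \eqref{eq:sls-lqr}, and the lower bound \(0\le J_N(\widehat\Phi_\theta^{\rm cl})-J_N^\star\) is immediate.

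\emph{Quadratic gap.} Write \(E:=\widehat\Phi_\theta^{\rm cl}-\Phi^\star\). Then \(\calZ E=0\), so \(E\) is a feasible direction. Because \(J_N\) is quadratic and \(\Phi^\star\) is optimal over the affine feasible set, the cross term \(\langle\bW\Phi^\star,\bW E\rangle\) vanishes. Hence
\[
J_N(\widehat\Phi_\theta^{\rm cl})-J_N^\star=\tfrac12\norm{\bW E}_{\barHtwo,N}^2 .
\]
This identity is what produces the squared rates \(\delta^{2\kappa}\) and the others. To control \(E\), set \(\Psi:=\Phi^\star-\widehat\Phi_\theta^\star\), so that \(\Delta_\theta=-\calZ\Psi\). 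Using \(\widehat\Phi_\theta^\star=\Phi^\star-\Psi\),
\[
E=(\Phi^\star-\Psi)(I+\Delta_\theta)^{-1}-\Phi^\star=-\Psi(I+\Delta_\theta)^{-1}-\Phi^\star\Delta_\theta(I+\Delta_\theta)^{-1}.
\]
Therefore
\[
\norm{\bW E}_{\barHtwo,N}\le(1-\alpha_{\rm sg})^{-1}\bigl(\norm{\bW\Psi}_{\barHtwo,N}+\norm{\bW\Phi^\star\calZ\Psi}_{\barHtwo,N}\bigr).
\]
Here we use \(\norm{XY}_{\barHtwo,N}\le\norm{X}_{\barHtwo,N}\norm{Y}_{\mathcal H_\infty}\) with the \(\mathcal H_\infty\) factor on the right. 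The first term is exactly \(E_{\rm tail}(\theta)\).

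\emph{Main obstacle.} The hard step is the second term, \(\norm{\bW\Phi^\star\calZ\Psi}_{\barHtwo,N}\), which must be bounded uniformly in \(N\). The factor \(\calZ\Psi\) sits on the right, so a plain \(\mathcal H_\infty\) bound on \(\bW\Phi^\star\) does not directly give the per-node \(\barHtwo\) normalization. I would instead use a column-wise computation. The per-node \(\barHtwo\) norm squared is \(N^{-1}\) times a sum over source columns \(j\) of \(\sum_t\norm{(\cdot)_{:,j}[t]}_F^2\). Each column satisfies
\[
\norm{(\bW\Phi^\star\calZ\Psi)_{:,j}}_{\mathcal H_2}\le\norm{\bW\Phi^\star}_{\mathcal H_\infty}\norm{(\calZ\Psi)_{:,j}}_{\mathcal H_2}.
\]
The first factor is bounded uniformly in \(N\): it is at most \(\norm{\bW}\) times the \(\calL_1\) row/column norms of \(\Phi^\star\), which are finite by Theorem~\ref{thm:physical-light-cone} and Assumption~\ref{ass:growth}. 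Since \(\calZ\) is local and bounded by Assumption~\ref{ass:lqr-local}, the second factor is at most a constant times the \(\mathcal H_2\) norm of nearby columns of \(\Psi\). Summing over \(j\) gives a bound by \(N\cdot C\,(E_{\rm row}\text{ or column analogue})^2\).

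Theorem~\ref{thm:tail}, or the same pointwise light-cone bound summed over columns by the identical argument, gives these tails as \(C(\delta_\kappa^\kappa+\delta_{\bar\kappa}^{\bar\kappa}+\delta_T^T)\). Squaring and using \((a+b+c)^2\le3(a^2+b^2+c^2)\) yields \eqref{eq:direct-truncation-exp-gap} with \(C_{\rm cl}\) depending only on \(\alpha_{\rm sg}\) and the uniform constants.

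If the column tails of \(\Psi\) are not covered by Theorem~\ref{thm:tail}, I would rederive them directly from \eqref{eq:physical-light-cone-decay}. That bound is symmetric in \(i\) and \(j\) through \(d_{\calG}(i,j)\), so the same shell-counting argument applies.
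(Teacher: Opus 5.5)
Your proof is correct and has the same structure as the paper's: the Neumann series for \(R_\theta:=(I+\Delta_\theta)^{-1}\), the achieved response \(\widehat\Phi_\theta^\star R_\theta\), the LQR orthogonality identity (Lemma~\ref{lem:lqr-orthogonality}), and the same two-term error decomposition. The only difference is how you bound \(\bW\Phi^\star\Delta_\theta R_\theta\).

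The paper places the \(\mathcal H_\infty\) norm on the residual. It writes \(\norm{\bW\Phi^\star\Delta_\theta R_\theta}_{\barHtwo,N}\le\norm{\bW\Phi^\star}_{\barHtwo,N}\,\varepsilon/(1-\varepsilon)\) with \(\varepsilon=\norm{\Delta_\theta}_{\mathcal H_\infty}\). It then uses the Riccati bound \(\norm{\bW\Phi^\star}_{\barHtwo,N}^2=2J_N^\star\le d_{\rm blk}L_P\) and the Schur estimate \eqref{eq:residual-impl-kbarT-schur} for \(\varepsilon\). That route needs no decay of \(\Phi^\star\) in this term, reuses the quantity appearing in the small-gain hypothesis, and needs both row and column tails of \(\Psi\).

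You instead place the \(\mathcal H_\infty\) norm on \(\bW\Phi^\star\). This requires a uniform bound on \(\norm{\Phi^\star}_{\mathcal H_\infty}\), which your Schur-test argument from Theorem~\ref{thm:physical-light-cone} and Assumption~\ref{ass:growth} supplies. In exchange, only a one-sided tail of \(\Psi\) is needed, since Lemma~\ref{lem:l1-to-h2} gives \(\norm{\calZ\Psi}_{\barHtwo,N}\le(1+\norm{\bA}+\norm{\bB})\norm{\Psi}_{\barHtwo,N}\le\sqrt2(1+2L)E_{\rm row}(\theta)\).

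The step you flag as the main obstacle is not actually one. The inequality \(\norm{XY}_{\barHtwo,N}\le\norm{X}_{\mathcal H_\infty}\norm{Y}_{\barHtwo,N}\) holds directly from the pointwise bound \(\norm{X(e^{\mathrm i\omega})Y(e^{\mathrm i\omega})}_F\le\norm{X(e^{\mathrm i\omega})}_2\norm{Y(e^{\mathrm i\omega})}_F\), so your column-wise computation just reproves it. Also, because \(\calZ\) multiplies from the left, \((\calZ\Psi)_{:,j}=\calZ\Psi_{:,j}\) involves the same column \(j\), not nearby columns. That bound needs neither locality nor graph growth.
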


The proof is in Appendix~\ref{app:direct-truncation-proof}.  The response tails bound the SLS residual (Lemma~\ref{lem:aux-tail-residual-estimates}), and the small gain condition controls its effect on the achieved response.  The LQR optimality identity then makes the cost gap quadratic in the achieved response error.  Thus, the response approximation bounds yield the squared decay rates in the achieved LQR cost gap.

The family also includes static gain truncation.  If the centralized feedback is \(\bu=-\bK^\star\bx\), then \(\Phi_u^\star[1]=-\bK^\star\).  At \(T=0\), the direct filters reduce to \(F_x=0\) and \(F_u\) equal to the negative of the \(\kappa\)-truncated gain, recovering the static construction in \cite{shin2023near}.  This candidate uses current measurements without storing past signals.

\subsection{Exact localized SLS reoptimization}\label{sec:exact-sls-reoptimization}
\subsubsection{Reoptimization on a prescribed mask}

Exact localized SLS requires both \(\Phi\in\calS_\theta^{\rm impl}\) and the zero-residual equality \(\calZ\Phi=I\).  Their conjunction is the standard SLS localizability requirement for a prescribed spatial and FIR response constraint \cite{wang2019localized,anderson2019sls}.  For \(\theta\in\Theta\), define
\begin{equation}\label{eq:F-kbarT-def}
  \calF_\theta^{\rm impl}:=\left\{
  \Phi\in z^{-1}\RH_\infty:
  \calZ\Phi=I,\ \Phi\in\calS_\theta^{\rm impl}
  \right\}.
\end{equation}
The minimum LQR cost over \(\calF_\theta^{\rm impl}\) is
\begin{equation}\label{eq:J-theta-loc-def}
  J_\theta^{\rm loc}
  :=
  \inf_{\Phi\in\calF_\theta^{\rm impl}} J_N(\Phi),
\end{equation}
  where \(J_\theta^{\rm loc}=+\infty\) if \(\calF_\theta^{\rm impl}\) is empty.  The SLS achievability equality and implementation tap constraints are affine, and the LQR objective is quadratic.  Because \(\calS_\theta^{\rm impl}\) permits only finitely many taps, \eqref{eq:J-theta-loc-def} is a convex quadratic program (QP).  For the state feedback objective and implementation tap mask used here, the QP separates by disturbance source.  It reoptimizes all permitted response coefficients and, when feasible, confines the actual nominal responses: \(\Phi_x[t]_{ij}\) and \(\Phi_u[t]_{ij}\) vanish for \(t>T+1\), and for \(1\le t\le T+1\) whenever \(d_{\calG}(i,j)>d_{\max}(t-1)\).  Thus an impulse has no effect outside the prescribed region or after the specified response horizon.  The shift by one follows from \eqref{eq:sls-implementation-filters}.

\subsubsection{Uniform local deadbeat repair certificate}\label{sec:deadbeat-repair}

For exact localized SLS, we obtain the resource law by repairing truncation residuals without losing the response tail rate.  A small residual alone does not ensure such repair.  The following uniform local deadbeat certificate supplies a bounded correction with fixed spatial and temporal margins.

\begin{assumption}[Uniform local deadbeat certificate]\label{ass:local-deadbeat}
There exist integers \(q\ge1\), \(r_d,r_K\ge0\), and \(L_d<\infty\), common to the network family and independent of \(N\), such that every network instance admits a feedback gain \(K_d\) satisfying
\[
  A_d:=\bA-\bB K_d,
  \qquad A_d^q=0.
\]
The matrices \(A_d\) and \(K_d\) have locality radii \(r_d\) and \(r_K\), respectively.  Their block row and block column \(\calL_1\) norms, together with those of \(A_d^s\), \(0\le s<q\), are bounded by \(L_d\).
\end{assumption}

Assumption~\ref{ass:local-deadbeat} is a sufficient, not necessary, certificate for exact localizability.  It is stronger than stabilizability and more restrictive than feasibility of one prescribed localized FIR SLS problem because it requires a static, uniformly local deadbeat gain with cancellation time and norm bounds independent of network size.  For example, an unactuated stable mode that is not nilpotent may decay asymptotically but cannot be eliminated in finite time.  The gain \(K_d\) is an auxiliary repair certificate, not the controller implemented by the repaired optimal response.  A candidate \(K_d\) certifies one network instance; validity over the network family additionally requires the common bounds in Assumption~\ref{ass:local-deadbeat}.

The following proposition states precisely what the deadbeat certificate supplies: a uniformly bounded local right inverse for strictly proper FIR residuals.
\begin{proposition}[Uniform residual repair with fixed margins]\label{prop:uniform-fir-repair}
Under Assumption~\ref{ass:local-deadbeat}, define
\begin{equation}\label{eq:constructive-repair-operator}
\begin{aligned}
  \mathcal R_x(D)
  &:=(zI-A_d)^{-1}D
    =\sum_{s=0}^{q-1}A_d^s z^{-(s+1)}D,\\
  \mathcal R_u(D)&:=-K_d\mathcal R_x(D),
  \qquad
  \mathcal R(D):=\operatorname{col}(\mathcal R_x(D),\mathcal R_u(D)).
\end{aligned}
\end{equation}
For every strictly proper FIR residual \(D\), the correction \(\mathcal R(D)\) is strictly proper and FIR and satisfies
\begin{equation}\label{eq:constructive-repair-bound}
\begin{aligned}
  \calZ\mathcal R(D)&=D,\\
  \norm{\mathcal R(D)}_{\calL_1,\row}
  &\le C_{\rm rep}\norm{D}_{\calL_1,\row},\\
  \norm{\mathcal R(D)}_{\calL_1,\col}
  &\le C_{\rm rep}\norm{D}_{\calL_1,\col},
\end{aligned}
\end{equation}
where \(C_{\rm rep}<\infty\) is independent of \(N\).  Relative to the support of \(D\), the support of \(\mathcal R(D)\) expands by at most \((q-1)r_d+r_K\) graph hops and \(q\) taps.
\end{proposition}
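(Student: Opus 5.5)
The argument is a direct verification built on the nilpotency \(A_d^q=0\) from Assumption~\ref{ass:local-deadbeat}; no earlier decay result is needed.

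\emph{Series identity.} Since \(A_d^q=0\), the Neumann series for \((zI-A_d)^{-1}=z^{-1}(I-z^{-1}A_d)^{-1}\) terminates. Hence
\[
(zI-A_d)^{-1}=\sum_{s=0}^{q-1}A_d^s z^{-(s+1)},
\]
which justifies the second expression in \eqref{eq:constructive-repair-operator} and shows that \((zI-A_d)^{-1}\) is a strictly proper FIR transfer matrix with \(q\) taps.

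\emph{Equality and FIR structure.} Using \(\calZ=[zI-\bA\ \ -\bB]\), compute
\[
\calZ\mathcal R(D)=(zI-\bA)\mathcal R_x(D)+\bB K_d\mathcal R_x(D)=(zI-A_d)(zI-A_d)^{-1}D=D.
\]
For the structural claims, write \(D=\sum_{t\ge1}D[t]z^{-t}\) with finitely many nonzero taps, say \(t\le t_D\). Then \(\mathcal R_x(D)=\sum_{s=0}^{q-1}\sum_t A_d^sD[t]z^{-(t+s+1)}\). This is strictly proper, so \(\mathcal R(D)\in z^{-1}\RH_\infty\), and it is FIR with last tap at most \(t_D+q\). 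The input part \(-K_d\mathcal R_x(D)\) is a static left multiplication, so it inherits both properties.

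\emph{Norm bounds.} The key fact is that the block \(\calL_1\) row and column norms are submultiplicative under products of block operators, including convolution in time. For row norms,
\[
\sum_{\tau}\sum_b\norm{(FG)[\tau]_{ab}}_F\le\sum_c\Bigl(\sum\norm{F_{ac}}_F\Bigr)\Bigl(\max_c\sum\norm{G_{cb}}_F\Bigr),
\]
using \(\norm{XY}_F\le\norm{X}_F\norm{Y}_F\); column norms follow symmetrically. Each shift \(z^{-(s+1)}\) leaves these norms unchanged. Therefore
\[
\norm{\mathcal R_x(D)}_{\calL_1,\row}\le\Bigl(\sum_{s=0}^{q-1}\norm{A_d^s}_{\calL_1,\row}\Bigr)\norm{D}_{\calL_1,\row}\le qL_d\norm{D}_{\calL_1,\row},
\]
and likewise \(\norm{\mathcal R_u(D)}\le L_d\,qL_d\norm{D}\). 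For the stacked row norm, where the maximum is over rows of either block, it suffices to take \(C_{\rm rep}=qL_d\max\{1,L_d\}\). The column norm of the stacked operator is a sum over its two blocks, so there I would take \(C_{\rm rep}=qL_d(1+L_d)\); this is independent of \(N\). Slight care is needed here because the row norms may mix induced and Frobenius norms. The paper's convention uses Frobenius norms, and with \(d_{\rm blk}\) bounded any remaining constants are harmless.

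\emph{Support expansion.} Locality radii add under block products, so \(A_d^s\) has radius at most \(s\,r_d\le(q-1)r_d\), and \(K_dA_d^s\) has radius at most \((q-1)r_d+r_K\). Hence if \(D[t]_{cj}\neq0\) only when \(d_{\calG}(c,j)\le\rho(t)\), the block \((A_d^sD[t])_{ij}\) can be nonzero only when \(d_{\calG}(i,j)\le\rho(t)+(q-1)r_d\), and the input blocks only when \(d_{\calG}(i,j)\le\rho(t)+(q-1)r_d+r_K\). In time the taps shift by \(s+1\le q\).

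\emph{Main obstacle.} The computations are routine, so the main point to handle carefully is the bookkeeping. The column-norm bound must be stated for the stacked \(\mathcal R\), and the locality of \(A_d^s\) for \(s<q\) must be taken from radius additivity, with the assumed bounds on \(A_d^s\) used only for the norms. The algebraic identity itself is a direct consequence of nilpotency.
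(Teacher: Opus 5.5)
Your proposal is correct and follows the paper's proof. Both arguments use the same four steps: the terminating Neumann series from $A_d^q=0$, the direct computation $\calZ\mathcal R(D)=(zI-\bA+\bB K_d)(zI-A_d)^{-1}D=D$, submultiplicativity of the block $\calL_1$ row and column norms together with the assumed bounds on $K_d$ and $A_d^s$, and additivity of locality radii for the support expansion. The only difference is cosmetic: you give separate row and column constants for the stacked operator, which is slightly sharper than the paper's single constant $(1+L_d)qL_d$.
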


The proof is in Appendix~\ref{app:exact-implementation-cost}.  Reserving the repair expansion and the plant coupling margin gives the following exact feasibility and performance guarantee.

\begin{theorem}[Exact localized SLS gap under a uniform local deadbeat certificate]\label{thm:conic-exact-local}
Under Assumptions~\ref{ass:lqr-local}--\ref{ass:growth} and~\ref{ass:local-deadbeat}, let the implementation tap mask \(\calM_\theta\) use the cone speed \(v_{\rm cone}\) from Theorem~\ref{thm:physical-light-cone}.  Then there exist fixed integers \(c_{\rm sp},c_T\ge0\), constants \(C_J<\infty\), and \(\delta_\kappa,\delta_{\bar\kappa},\delta_T\in(0,1)\), all independent of \(N\) and \(\theta\), such that every \(\theta=(\kappa,\bar\kappa,T)\in\Theta\) with \(\kappa,\bar\kappa\ge c_{\rm sp}\) and \(T\ge c_T\) is exactly feasible and satisfies
\begin{equation}\label{eq:conic-exact-local-cost}
  0\le J_\theta^{\rm loc}-J_N^\star
  \le
  C_J
  \left(
  \delta_\kappa^{2\kappa}
  +\delta_{\bar\kappa}^{2\bar\kappa}
  +\delta_T^{2T}
  \right).
\end{equation}
\end{theorem}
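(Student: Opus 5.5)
The plan is to exhibit an explicit feasible point of \(\calF_\theta^{\rm impl}\) whose response differs from \(\Phi^\star\) by a small, localized amount, and then to use the LQR optimality identity to turn that response error into a quadratic cost gap. We start from a \emph{shrunk} architecture \(\theta'=(\kappa',\bar\kappa',T')\) with \(\kappa'=\kappa-c_1\), \(\bar\kappa'=\bar\kappa-c_1\), \(T'=T-c_2\). The margins are chosen so that support expansions caused by the plant coupling and by the repair step stay inside \(\calM_\theta\). Concretely, \(c_1\) accounts for \(r_0\) from the plant coupling plus \((q-1)r_d+r_K\) from Proposition~\ref{prop:uniform-fir-repair}, and \(c_2\) accounts for \(q+1\) taps. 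Set \(c_{\rm sp}:=c_1\) and \(c_T:=c_2\).

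We then truncate, \(\widehat\Phi^\star_{\theta'}=\calT^{\rm impl}_{\theta'}\Phi^\star\), and form the residual \(\Delta_{\theta'}=-\calZ(\Phi^\star-\widehat\Phi^\star_{\theta'})\) as in \eqref{eq:exact-section-conic-residual}. Because \(\widehat\Phi^\star_{\theta'}\) is FIR and \(\calZ\) is a polynomial in \(z\) with \(\calZ\widehat\Phi^\star_{\theta'}=I+\Delta_{\theta'}\), the residual is FIR. It is also strictly proper: \((\widehat\Phi^\star_{\theta'})_x[1]=I\) fixes the \(z^0\) coefficient, so \(\Delta_{\theta'}\) has no \(z^0\) term.

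For the support of \(\Delta_{\theta'}\), I would verify tap by tap that \(\Delta_{\theta'}[t]_{ij}\) can be nonzero only where \(d_{\calG}(i,j)\le d_{\max}'(t-1)+r_0\) and \(t\le T'+2\). This holds because the residual equals \(\calZ\) applied to the truncated part, and \(\calZ\widehat\Phi\) only moves supports by one shift and one \(r_0\)-neighborhood. Here the cone speed \(v_{\rm cone}\ge r_0\) is what keeps the shifted mask nested inside the one-step-later mask. The repaired point is
\[
  \Phi^{\rm rep}:=\widehat\Phi^\star_{\theta'}-\mathcal R(\Delta_{\theta'}).
\]
Proposition~\ref{prop:uniform-fir-repair} gives \(\calZ\Phi^{\rm rep}=I\). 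Since \(\mathcal R(\Delta_{\theta'})\) carries a leading \(z^{-1}\) factor, it does not disturb \(\Phi^{\rm rep}_x[1]=I\). The support bounds of the proposition, together with the chosen margins, give \(\Phi^{\rm rep}\in\calS_\theta^{\rm impl}\). This establishes exact feasibility. The main obstacle is the careful cone bookkeeping needed when \(\kappa+v_{\rm cone}\tau\) hits \(\bar\kappa\). I would first treat the two regimes of \(d_{\max}\) separately, and shrink both \(\kappa\) and \(\bar\kappa\) by the same \(c_1\) so the nesting survives the \(\min\).

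For the size estimates, Lemma~\ref{lem:aux-tail-residual-estimates} (equivalently, Theorem~\ref{thm:tail} combined with \(\|\calZ\|\)-type bounds in \(\calL_1\) row and column norms) bounds \(\norm{\Delta_{\theta'}}_{\calL_1,\row}\) and \(\norm{\Delta_{\theta'}}_{\calL_1,\col}\) by a constant times \(\delta_\kappa^{\kappa'}+\delta_{\bar\kappa}^{\bar\kappa'}+\delta_T^{T'}\). The column bound uses the column-sum analogue of Theorem~\ref{thm:tail}, which follows identically from the symmetric decay \eqref{eq:physical-light-cone-decay}. Then Proposition~\ref{prop:uniform-fir-repair} bounds \(\mathcal R(\Delta_{\theta'})\) in both norms by the same quantity times \(C_{\rm rep}\). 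To pass to the per-node \(\barHtwo\) energy, I use
\[
  \norm{G}_{\barHtwo,N}^2\le d_{\rm blk}\max(\norm{G}_{\calL_1,\row},\norm{G}_{\calL_1,\col})^2 ,
\]
which follows from summing Frobenius norms column by column and dividing by \(N\). Combining this with \(E_{\rm tail}(\theta')\) gives
\[
  \norm{\bW(\Phi^{\rm rep}-\Phi^\star)}_{\barHtwo,N}\le C\left(\delta_\kappa^{\kappa}+\delta_{\bar\kappa}^{\bar\kappa}+\delta_T^{T}\right),
\]
where the constant shifts \(c_1,c_2\) are absorbed into \(C\).

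Finally, the cost gap. Write \(E:=\Phi^{\rm rep}-\Phi^\star\). This difference satisfies \(\calZ E=0\), and \(\Phi^\star\) is the minimizer of the quadratic \(J_N\) on the affine set \(\{\calZ\Phi=I\}\), so first-order optimality kills the cross term: \(\langle \bW\Phi^\star,\bW E\rangle=0\). Hence
\[
  J_N(\Phi^{\rm rep})-J_N^\star=\tfrac12\norm{\bW E}_{\barHtwo,N}^2 .
\]
This is the same identity used in Theorem~\ref{thm:direct-truncation-small-gain}. Squaring the previous bound and using \((a+b+c)^2\le3(a^2+b^2+c^2)\) yields \eqref{eq:conic-exact-local-cost}, since \(J_N^\star\le J_\theta^{\rm loc}\le J_N(\Phi^{\rm rep})\).
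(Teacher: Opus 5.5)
Your proposal is correct and follows essentially the paper's own proof. It uses the same eroded triple with \(c_{\rm sp}=r_0+(q-1)r_d+r_K\) and \(c_T=q+1\), the same repaired point \(\widehat\Phi^\star_{\theta'}-\mathcal R(\Delta_{\theta'})\) from Proposition~\ref{prop:uniform-fir-repair}, and the LQR orthogonality identity applied to the \(\calL_1\)-to-\(\barHtwo\) tail bound.

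One bookkeeping correction. \(\Delta_{\theta'}[t]\) contains filter tap \(t\) of \(z(\widehat\Phi^\star_{\theta'})_x-I\), so in general its support is only within \(d_{\calG}(i,j)\le d_{\max}'(t)+r_0\), not \(d_{\max}'(t-1)+r_0\). Your margins still give containment in \(\calM_\theta\), because every repaired tap satisfies \(\tau'\ge t\) and \(d_{\max}'\) is nondecreasing. No comparison between \(v_{\rm cone}\) and \(r_0\) is needed.
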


The proof in Appendix~\ref{app:exact-implementation-cost} repairs a truncation formed inside the reserved spatial and temporal margins, then applies the response tail bound and LQR optimality identity.  The deadbeat certificate supplies eventual exact feasibility, while response decay gives the squared performance rates.  Feasibility below the fixed margins is not asserted.

\subsection{Resources independent of network size for prescribed performance}\label{sec:resource-law}

For any prescribed LQR loss per node relative to centralized control, sufficient local resources scale with the loss tolerance rather than with the total network size.
\begin{corollary}[Resources independent of network size for prescribed performance]\label{cor:accuracy-resources}
Suppose Assumptions~\ref{ass:lqr-local}--\ref{ass:growth} hold, and let \(v_{\rm cone}\) be as in Theorem~\ref{thm:physical-light-cone}.  Assume in addition that every graph ball satisfies
\[
  |\{j:d_{\calG}(i,j)\le r\}|\le C_{\rm gr}(1+r)^D
\]
  with constants \(C_{\rm gr}\ge1\) and \(D\ge0\) independent of \(N\).  Then there are constants \(c_0,c_1<\infty\), independent of \(N\), such that for every \(\varepsilon\in(0,1)\) one can choose \(\theta_\varepsilon=(\kappa_\varepsilon,\bar\kappa_\varepsilon,T_\varepsilon)\) with
\begin{equation}\label{eq:accuracy-parameter-scaling}
  \max\{\kappa_\varepsilon,\bar\kappa_\varepsilon,T_\varepsilon\}
  \le c_0+c_1\log(1/\varepsilon)
\end{equation}
such that the direct FIR realization is internally stable and
\begin{equation}\label{eq:accuracy-direct-gap}
  J_N(\widehat\Phi_{\theta_\varepsilon}^{\rm cl})-J_N^\star
  \le\varepsilon.
\end{equation}
If Assumption~\ref{ass:local-deadbeat} also holds, \(\theta_\varepsilon\) can be enlarged by fixed margins so that the exact localized problem is feasible and
\begin{equation}\label{eq:accuracy-exact-gap}
  J_{\theta_\varepsilon}^{\rm loc}-J_N^\star\le\varepsilon.
\end{equation}
For the direct route, and for the exact route when Assumption~\ref{ass:local-deadbeat} holds, the local resources satisfy
\begin{equation}\label{eq:accuracy-resource-scaling}
  C_{\rm halo},C_{\rm foot}=O(\log^D(1/\varepsilon)),
  \qquad
  C_{\rm tap}=O(\log^{D+1}(1/\varepsilon)),
\end{equation}
with constants independent of \(N\).
\end{corollary}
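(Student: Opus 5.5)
The plan is to derive the corollary from Theorems~\ref{thm:direct-truncation-small-gain}, \ref{thm:tail}, and~\ref{thm:conic-exact-local} by choosing all three parameters on a single logarithmic scale. Let \(\delta:=\max\{\delta_\kappa,\delta_{\bar\kappa},\delta_T\}<1\), taken over the rates appearing in those theorems. For a target \(\varepsilon\), set
\[
  \kappa_\varepsilon=\bar\kappa_\varepsilon=T_\varepsilon=m_\varepsilon,
  \qquad
  m_\varepsilon:=\max\bigl\{m_0,\lceil \log(3C/\varepsilon)/(2\log(1/\delta))\rceil\bigr\},
\]
where \(C\) is the larger of \(C_{\rm cl}\) and \(C_J\). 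With this choice \(C(\delta^{2\kappa}+\delta^{2\bar\kappa}+\delta^{2T})\le\varepsilon\). Since \(\kappa=\bar\kappa\), the tuple lies in \(\Theta\). The scaling \eqref{eq:accuracy-parameter-scaling} then follows with \(c_1=1/(2\log(1/\delta))\) and \(c_0\) absorbing \(m_0\), \(\log(3C)\), and the ceiling.

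\textbf{Direct route.} The hypothesis of Theorem~\ref{thm:direct-truncation-small-gain} must be verified, namely \(\norm{\Delta_\theta}_{\mathcal H_\infty}\le\alpha_{\rm sg}\) for a fixed margin, say \(\alpha_{\rm sg}=1/2\). I expect this to be the main step.

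1. Bound the residual in the block \(\calL_1\) norms. Using \(\Delta_\theta=-\calZ(\Phi^\star-\widehat\Phi^\star_\theta)\), we have \(\calZ=[zI-\bA,\ -\bB]\). Both \(\bA\) and \(\bB\) are local with bounded block norms (Assumption~\ref{ass:lqr-local}). Combined with Assumption~\ref{ass:growth}, this gives row and column \(\calL_1\) norms of \(\bA,\bB\) bounded independently of \(N\). The shift \(z\) only reindexes taps and does not change \(\calL_1\) norms. Hence \(\norm{\Delta_\theta}_{\calL_1,\row}\lesssim E_{\rm row}(\theta)\), and the analogous column bound holds with a column tail.

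2. Control the column tail. The column tail is bounded exactly as in Theorem~\ref{thm:tail}, because the pointwise bound \eqref{eq:physical-light-cone-decay} is symmetric in \(i,j\) and the growth bound holds around every node. This is presumably also the content of Lemma~\ref{lem:aux-tail-residual-estimates}.

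3. Conclude the small-gain condition. Then \(\gamma_\theta\le C'(\delta_\kappa^\kappa+\delta_{\bar\kappa}^{\bar\kappa}+\delta_T^T)\). Choosing \(m_0\) so that this is at most \(1/2\) gives \(\norm{\Delta_\theta}_{\mathcal H_\infty}\le\gamma_\theta\le\alpha_{\rm sg}\). Theorem~\ref{thm:direct-truncation-small-gain} then yields internal stability and \eqref{eq:accuracy-direct-gap}.

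\textbf{Exact route.} Under Assumption~\ref{ass:local-deadbeat}, enlarge \(m_0\) further to at least \(\max\{c_{\rm sp},c_T\}\). Theorem~\ref{thm:conic-exact-local} then gives feasibility and \eqref{eq:accuracy-exact-gap} directly. This only changes \(c_0\), so the logarithmic scaling is preserved.

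\textbf{Resource counts.} By the ball-growth hypothesis, every summand in \eqref{eq:C-halo-def} and \eqref{eq:C-foot-def} is at most \(C_{\rm gr}(1+m_\varepsilon)^D=O(\log^D(1/\varepsilon))\). The same bound holds after averaging over nodes. For \eqref{eq:C-tap-def}, \(d_{\max}(\tau)\le\bar\kappa\), so
\[
  C_{\rm tap}\le(T+1)\,C_{\rm gr}(1+\bar\kappa)^D=O(\log^{D+1}(1/\varepsilon)).
\]
All constants involved, namely \(C_{\rm gr}\), \(D\), \(\delta\), \(C\), and \(m_0\), are independent of \(N\), which completes the argument.
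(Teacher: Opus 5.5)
Your proposal is correct and follows the paper's proof essentially step for step. All three parameters are set to a common $m_\varepsilon$ of order $\log(1/\varepsilon)$, and a fixed threshold (your $m_0$, the paper's term $\lambda^{-1}\log(3C_{\rm sg}/\alpha_{\rm sg})$) enforces the small-gain hypothesis via the residual estimates of Lemma~\ref{lem:aux-tail-residual-estimates}. The margins $c_{\rm sp},c_T$ are then added for Theorem~\ref{thm:conic-exact-local}, and the ball-growth bound with $d_{\max}(\tau)\le\bar\kappa$ gives the resource counts. The only cosmetic difference is that you fold $C_J$ and the exact-route rates into a single $C$ and $\delta$ from the start, which is legitimate only when Assumption~\ref{ass:local-deadbeat} holds; the paper instead takes a separate maximum $m_\varepsilon^{\rm ex}$.
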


The proof is in Appendix~\ref{app:resource-law-proof}.  Exponential response tail bounds give logarithmic bounds on the immediate communication radius, maximum spatial footprint, and memory horizon, and polynomial graph growth converts these parameters into the resource counts in \eqref{eq:accuracy-resource-scaling}.  The direct conclusion does not require Assumption~\ref{ass:local-deadbeat}; the exact conclusion does.

\subsection{Choosing a control architecture and an implementation route}\label{sec:design-selection}

Figure~\ref{fig:mask-route-workflow} summarizes the design procedure for prescribed resources, performance, and nominal disturbance containment.
\begin{figure}[!t]
\centering
\includegraphics[width=0.64\linewidth]{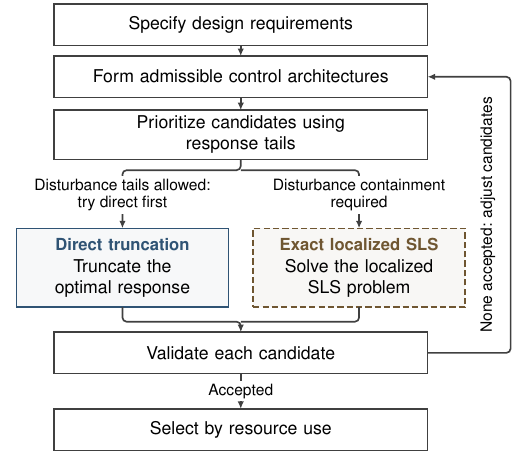}
\caption{Control architecture and implementation route selection.  Each return to candidate adjustment respects the specified resource and disturbance containment limits.}
\label{fig:mask-route-workflow}
\end{figure}

Admissible architectures satisfy communication deadlines, communication and storage budgets at each node, and any disturbance containment limits.  At comparable resource use, a smaller \(E_{\rm tail}\), computed from \(\Phi^\star\), gives higher evaluation priority.  Inexpensive candidates such as \(T=0\) remain worth testing even when their tails are large.

For direct truncation, verify stability using \(\gamma_\theta<1\) or a separate test and evaluate the achieved loss using \eqref{eq:direct-truncation-cl-response}.  For exact SLS, verify feasibility and loss of the complete localized problem.  Select the accepted design with the smallest specified resource measure, such as \(C_{\rm tap}\).  If none meets the requirements, increase \(\kappa\), \(\bar\kappa\), or \(T\) to the next admissible value within the deadlines, budgets, and containment limits.  Rank new candidates by tail reduction per added resource and repeat the checks.  If no admissible enlargement remains, report that no tested design has been validated.

\section{Numerical Experiments}\label{sec:mesh-validation}
\begin{figure}[!t]
\centering
\includegraphics[width=\linewidth]{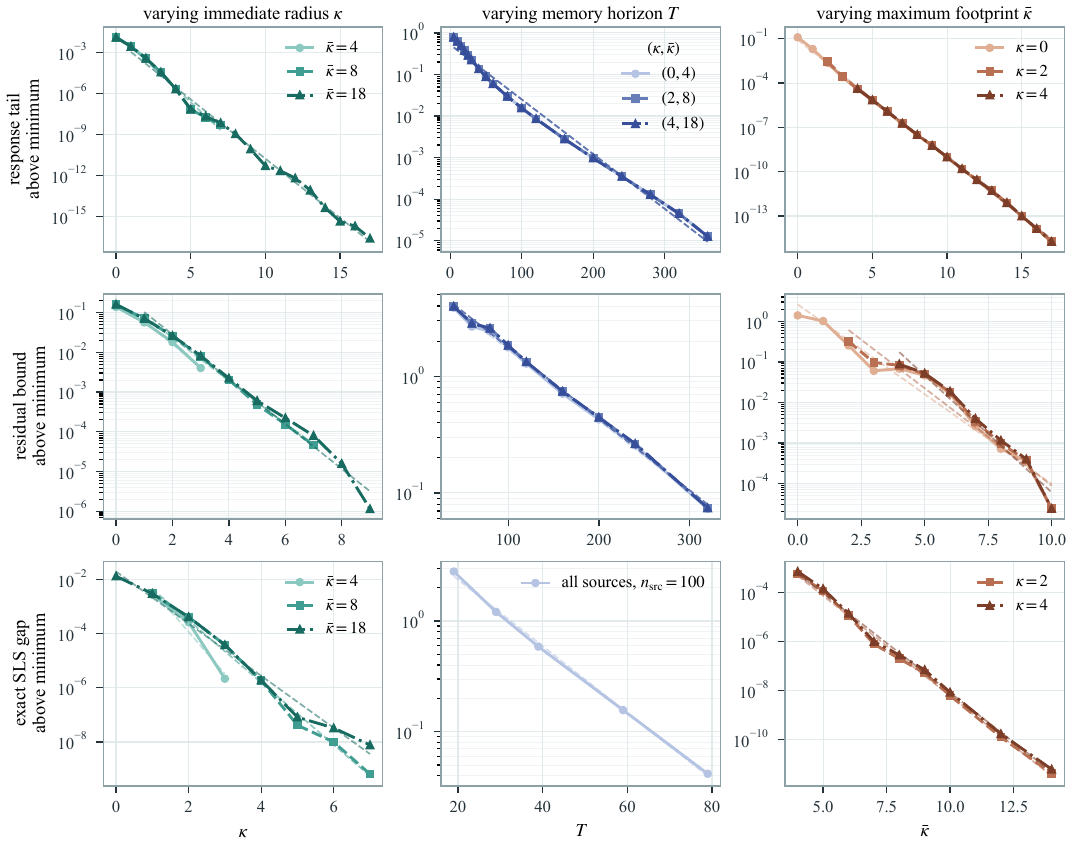}
\caption{Variation above each curve's sampled minimum in the normalized response tail (top), direct residual bound (middle), and normalized exact localized SLS cost gap (bottom), as \(\kappa\), \(T\), or \(\bar\kappa\) varies.  Dashed curves are log linear fits.  The spatial sweeps use \(T=80,160,79\) in the top, middle, and bottom rows, respectively; the bottom spatial panels use six disturbance-source nodes, whereas the bottom horizon panel uses all \(100\) nodes.  Structurally infeasible exact QPs are excluded before subtracting each sampled minimum; for \(\kappa=0\), only \(\bar\kappa=18\) is feasible in the bottom-right sweep, leaving no positive excess to plot.}
\label{fig:mesh-parameter-sweeps}
\end{figure}

We consider temperature control in a multizone building, where adjacent zones exchange heat through shared boundaries \cite{patel2019economic,shin2023near}.\footnote{Code and data for reproducing the numerical results are available at \url{https://github.com/Daakuang/spacetime-sls}, archived as version~1.0.0 at \url{https://doi.org/10.5281/zenodo.23236618}.}  The objective is to reject disturbances and restore the zone temperatures to their setpoints.  We use deviation variables about this steady state.  For zone \(i\), let \(T_i\) denote its temperature deviation, \(U_i\) the integral of \(T_i\), and \(u_i\) the local heating or cooling input.  With weighted graph Laplacian \(L_{\calG}\), the sampled dynamics and nodal stage cost are
\begin{equation}\label{eq:mesh-model}
\begin{aligned}
U_i(t+1)&=U_i(t)+\Delta t\,T_i(t),\\
T_i(t+1)&=T_i(t)-\Delta t\sum_j[L_{\calG}]_{ij}T_j(t)
  +\Delta t\,\eta u_i(t),\\
\ell_i(t)&=\eta^2U_i(t)^2+u_i(t)^2.
\end{aligned}
\end{equation}
Here, \(\Delta t\) is the sampling interval, \(k_{ij}>0\) is the edge weight in \(L_{\calG}\), and \(\eta>0\) is the actuation coefficient.  Unless stated otherwise, we use a \(10\times10\) mesh with \(N=100\), \(180\) edges, a graph diameter of \(18\) hops, \(k_{ij}=0.05\), \(\Delta t=1\), and \(\eta=2^{-4}\).  Nodes are numbered from zero, row by row, with \(x_i=\col(U_i,T_i)\).

All experiments use the centralized LQR response as the reference.  For control architecture parameters \(\theta=(\kappa,\bar\kappa,T)\), define the normalized response tail
\(p_\theta:=E_{\rm tail}(\theta)^2/(2J_N^\star)\) for \(J_N^\star>0\).
For candidate screening, we evaluate \(p_\theta\) for \(0\le\kappa\le\bar\kappa\le18\) and \(19\) horizons from \(4\) to \(399\), using \(560\) response taps and the Riccati value matrix for the remaining temporal tail.
For direct truncation, stability is first tested using \(\gamma_\theta<1\) in \eqref{eq:gamma-theta-def}.
Static direct costs use a discrete Lyapunov equation; dynamic costs use finite sums of achieved closed-loop response energy, with summation length independent of controller memory.  These finite sums are numerical estimates unless a remainder bound is supplied.
The localized QPs are solved independently for each disturbance source using OSQP \cite{stellato2020osqp}, with response indices \(1,\ldots,T+1\).  We check structural feasibility separately from numerical status and SLS equality residuals.
Solving the block columns associated with all \(N\) disturbance-source nodes gives a complete controller and cost \(J_\theta^{\rm loc}\); selected disturbance-source nodes are used only for parameter sensitivity or screening diagnostics.
The experiments follow the theoretical chain from response decay to implementation and resource scaling, and then apply the selection procedure to a performance target.

\begin{figure}[!h]
\centering
\includegraphics[width=0.72\linewidth]{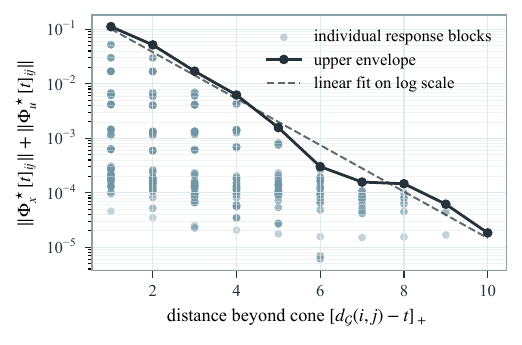}
\caption{Centralized response blocks versus distance beyond the effective coupling cone.  The solid curve is the largest norm at each distance.}
\label{fig:mesh-offcone-response}
\end{figure}

Figure~\ref{fig:mesh-offcone-response} first tests the pointwise decay statement behind the design rules.
For an impulse at the center node \(j=55\), it plots the blocks of \(\Phi_x^\star\) and \(\Phi_u^\star\) against \([d_{\calG}(i,j)-t]_+\).
The semilog envelope is consistent with the effective coupling cone in Theorem~\ref{thm:physical-light-cone}.
The accumulated response tail is the first link from pointwise decay to control architecture selection.

Figure~\ref{fig:mesh-parameter-sweeps} compares this tail with the two implementation routes under separate sweeps of \(\kappa\), \(T\), and \(\bar\kappa\).
The figure isolates sensitivity to each parameter by subtracting the minimum attained within each sampled curve.  This displays variation above the remaining baseline when the other parameters are fixed; it does not show the total loss approaching zero.  All stability, feasibility, and cost statements below use the original quantities before this subtraction.

The top row shows the normalized centralized response tail.
For \((\kappa,\bar\kappa)=(4,18)\), \(p_\theta\) falls from \(7.82\times10^{-1}\) at \(T=5\) to \(1.08\times10^{-5}\) at \(T=400\).
The observed spatial and temporal trends are consistent with the three tail terms in Theorem~\ref{thm:tail}.

The middle row tests the direct route without solving a localized QP.
For \((\kappa,\bar\kappa)=(4,18)\), \(\gamma_\theta\) falls from \(4.04\) at \(T=40\) to \(6.20\times10^{-2}\) at \(T=400\), crossing the sufficient stability threshold in Theorem~\ref{thm:direct-truncation-small-gain}.
The achieved normalized cost gap \(J_N(\widehat\Phi_\theta^{\rm cl})/J_N^\star-1\) at \(T=400\) is \(2.284\times10^{-6}\), so the induced norm certificate is conservative as a loss predictor.

The bottom row evaluates exact localized reoptimization.
For this mesh family, the deadbeat condition in Theorem~\ref{thm:conic-exact-local} can be checked in closed form.
Writing \(d:=\Delta t\), choose
\begin{equation}\label{eq:mesh-deadbeat-gain}
K_d=\begin{bmatrix}(d^2\eta)^{-1}I&(d\eta)^{-1}(2I-dL_{\calG})\end{bmatrix}.
\end{equation}
Stacking \eqref{eq:mesh-model} by state component, with \(\bm U:=\col_i U_i\) and \(\bm T:=\col_i T_i\), gives
\[
\bA-\bB K_d=\begin{bmatrix}I&dI\\-d^{-1}I&-I\end{bmatrix},
\qquad (\bA-\bB K_d)^2=0.
\]
Thus Assumption~\ref{ass:local-deadbeat} holds with \(q=2\).
Here, \(\bA-\bB K_d\) is block diagonal across graph nodes, so \(r_d=0\), whereas \(K_d\) has radius \(r_K=1\).
Since the plant radius is \(r_0=1\), the appendix construction gives \(c_{\rm sp}=r_0+(q-1)r_d+r_K=2\).
The uniformly bounded weighted degree, together with fixed \(d\) and \(\eta\), makes the repair bounds independent of mesh size.

For this mesh with \(\kappa=0\), an exact response to a disturbance at node \(j\) is feasible only if \(\bar\kappa\ge e_j\) and \(T\ge e_j+1\), where \(e_j:=\max_i d(i,j)\): the temperature propagation front precedes the permitted input, and its integral response persists one step longer.  Waiting until the source cone covers the mesh and then applying the two-step deadbeat feedback establishes sufficiency.  Thus complete exact responses require \(\bar\kappa\ge18\) and \(T\ge19\); small numerical residuals alone do not establish feasibility.
With \(\kappa=0\) and \(\bar\kappa=\kappa+v_{\rm cone}T\), the normalized exact cost gap \(J_\theta^{\rm loc}/J_N^\star-1\) over all \(100\) sources falls from \(2.86\) at \(T=19\) to \(5.53\times10^{-2}\) at \(T=99\), with block-row \(\ell_1\) SLS equality residuals below \(1.5\times10^{-11}\).
Because \(\kappa=0<c_{\rm sp}=2\), this horizon sweep violates the theorem's sufficient spatial inequalities.
The spatial panels use the six diagonal sources \(\{0,22,44,66,88,99\}\) for parameter sensitivity and include masks that both violate and satisfy these inequalities.
These finite network QPs evaluate feasibility and cost; they do not estimate the uniform repair bound.

Corollary~\ref{cor:accuracy-resources} predicts that a prescribed performance level can be maintained using local resources independent of network size.
To test this consequence without retuning, we fix \(\theta=(4,4,160)\) on \(5\times5\), \(7\times7\), \(10\times10\), and \(12\times12\) meshes.
Figure~\ref{fig:mesh-size-scaling} reports the normalized cost gap \(J_N(\widehat\Phi_\theta^{\rm cl})/J_N^\star-1\) achieved by the direct truncation realization, its residual certificate \(\gamma_\theta\), the normalized centralized response tail, and the local resource counts.
For \(25\le N\le144\), the achieved gap remains between \(6.81\times10^{-6}\) and \(2.29\times10^{-5}\), and \(0.773\le\gamma_\theta\le0.892<1\).
Meanwhile, \(C_{\rm halo}=C_{\rm foot}\le41\) and \(C_{\rm tap}\le6601\), while the average footprint falls from \(77.6\%\) to \(21.8\%\) of the network.
This finite sweep is consistent with the corollary; it does not establish uniformity or identify a minimum-resource control architecture.

\begin{figure}[t]
\centering
\includegraphics[width=0.54\linewidth]{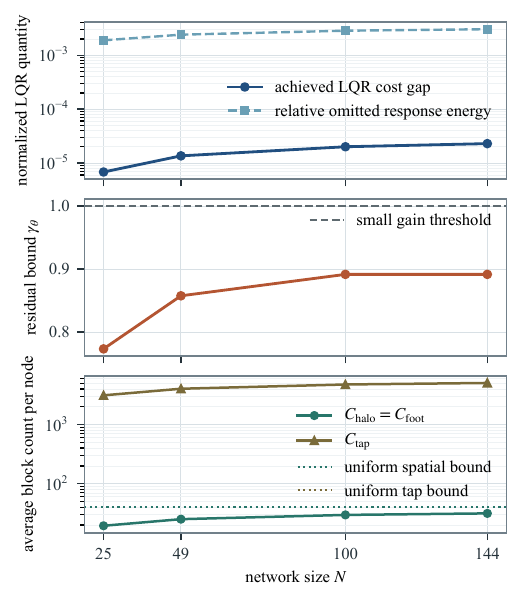}
\caption{Normalized cost gap achieved by the direct truncation realization, its residual certificate, the normalized response tail, and resource counts per node for \(\theta=(4,4,160)\) across four mesh sizes.}
\label{fig:mesh-size-scaling}
\end{figure}

Figure~\ref{fig:mesh-candidate-reduction} supports using the response tail to prioritize exact QP evaluations.
Of the \(3610\) control architectures in the sweep, \(103\) are nondominated in \((C_{\rm tap},p_\theta)\).
We evaluate control architectures both on and off this frontier, including 168 complete QPs: 140 are feasible and 28 are structurally infeasible.
Among the feasible complete QPs, the Spearman correlation between \(\log p_\theta\) and \(\log(J_\theta^{\rm loc}/J_N^\star-1)\) is \(0.990\).
The response tail thus ranks the tested control architectures well.

\begin{figure}[t]
\centering
\includegraphics[width=0.60\linewidth]{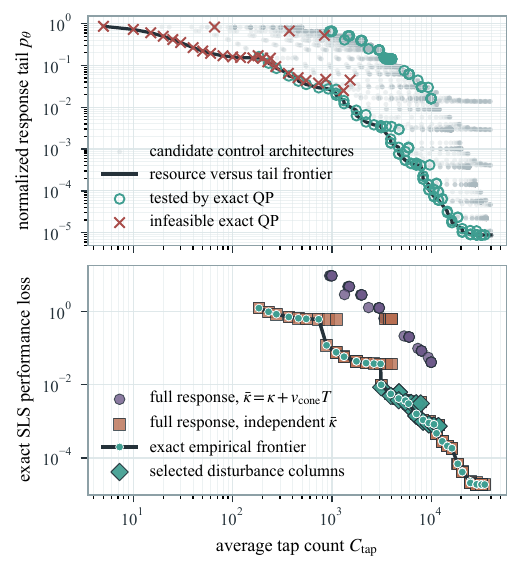}
\caption{Normalized response tail and exact localized SLS loss versus average tap count.  The lower panel distinguishes complete evaluations from evaluations using selected disturbance-source nodes.}
\label{fig:mesh-candidate-reduction}
\end{figure}

Table~\ref{tab:mesh-design-selection} compares three designs for a \(1\%\) relative LQR loss target on the same \(100\)-node mesh, with two-hop measurements available at the current update and three-hop internal signals one update later.
All costs include every disturbance coordinate.  The dynamic direct estimates sum the first \(880\) achieved closed-loop response coefficients.

For performance alone, \(\theta=(2,2,0)\) meets the target with \(0.1765\%\) loss and the smallest \(C_{\rm tap}\) among these candidates, without retaining past signals.
Its closed-loop spectral radius is \(0.9902<1\).  The dynamic candidate \((2,3,159)\) satisfies \(\gamma_\theta<1\); for \((2,2,159)\), small gain instead follows from \(\sum_t\|S\Delta_\theta[t]S^{-1}\|_2<0.683\), where \(S=I_N\otimes\diag(1/4,1)\).
Exact SLS at \(T=0\) is infeasible because the input acts only on temperature and cannot eliminate the integral response in one step.

For nominal disturbance containment within three hops and no response after index~\(160\), the exact design \(\theta=(2,3,159)\) achieves \(0.9918\%\) loss; reducing the maximum spatial footprint to two hops raises the loss to \(4.3227\%\).
Both complete QPs have block-row \(\ell_1\) SLS equality residuals below \(9.0\times10^{-12}\).

Thus, among these candidates, adding disturbance containment changes the choice from static direct control to exact SLS with a three-hop footprint and additional memory.

\begin{table}[!t]
\caption{Implementation resources and LQR loss for a \(1\%\) target.}
\label{tab:mesh-design-selection}
\centering
\setlength{\tabcolsep}{5pt}
\renewcommand{\arraystretch}{1.12}
\begin{tabular}{@{}cccc@{}}
\toprule
\(\theta\) & \(C_{\rm tap}\) & Direct loss & Exact loss\\
\midrule
\((2,2,0)\) & \(11.04\) & \(0.1765\%\) & infeasible\\
\((2,2,159)\) & \(1766.40\) & \(0.4361\%^{\dagger}\) & \(4.3227\%\)\\
\((2,3,159)\) & \(3127.44\) & \(0.0804\%^{\dagger}\) & \(0.9918\%\)\\
\bottomrule
\end{tabular}

\smallskip
\parbox{\columnwidth}{\footnotesize Loss is \(100(J/J_N^\star-1)\%\).  \(C_{\rm tap}\) is the average number of permitted filter-block locations per controller.  \(\dagger\) marks a finite-sum estimate.}
\end{table}

\FloatBarrier
\section{Conclusion}

Spatiotemporal response decay provides a quantitative basis for control architecture selection.  Under the stated assumptions, a prescribed LQR loss per node relative to centralized control can be attained with communication, storage, and filtering counts independent of network size.  The two controller constructions distinguish limits on implementation resources from limits on disturbance propagation.  Direct truncation uses local filters while allowing disturbance tails; exact localized SLS additionally confines the nominal disturbance response.  The thermal mesh illustrates how this distinction changes the control architecture and implementation route selected for a prescribed performance target.

Further work can address three limitations of the present analysis.  One direction is to replace the uniform local deadbeat assumption with weaker, verifiable conditions for local residual repair that preserve the performance and resource bounds.  Another is to quantify disturbance propagation outside the prescribed support under model uncertainty and incorporate this sensitivity into control architecture selection.  A further direction is to incorporate communication service choices, delays, and installation costs into control architecture models and develop scalable algorithms for controller and control architecture co-design.  Response decay could guide the selection of candidate control architectures within these richer models.

\appendix

\section{Auxiliary Facts and Proofs}\label{app:auxiliary-proofs}
\subsection{Finite horizon representation and optimality response graph}\label{app:kkt-graph-definition}

Each optimal response coefficient is the sensitivity of an optimal state or input trajectory to a localized initial condition.  We represent this sensitivity by a problem with a finite horizon whose terminal cost preserves the centralized infinite horizon trajectory.  Let \(\bP^\star\succeq0\) be the stabilizing Riccati value matrix for \eqref{eq:centralized-lqr}.  For horizon \(T_{\rm h}\) and initial condition \(\xbar\), consider
\begin{equation}\label{eq:finite-lqr-main}
\begin{aligned}
  \min_{\substack{\bx(0:T_{\rm h})\\ \bu(0:T_{\rm h}-1)}}\quad &
  \sum_{\tau=0}^{T_{\rm h}-1}\ell(\bx(\tau),\bu(\tau))
  +V^\star(\bx(T_{\rm h}))\\
  \mathrm{s.t.}\quad &
  \bx(0)=\xbar,\\
  &\bx(\tau+1)=\bA\bx(\tau)+\bB\bu(\tau),\\
  &\hfill \tau=0,\ldots,T_{\rm h}-1,
\end{aligned}
\end{equation}
where \(\ell(x,u):=\frac12x^\top\bQ x+\frac12u^\top\bR u\) and \(V^\star(x):=\frac12x^\top\bP^\star x\).  The Riccati terminal cost is the infinite horizon value function, so the optimizer of \eqref{eq:finite-lqr-main} agrees with the centralized infinite horizon trajectory over its first \(T_{\rm h}\) steps.  Perturbing the initial state at node \(j\) therefore produces the SLS response to a disturbance impulse at node \(j\).

The first order optimality conditions of \eqref{eq:finite-lqr-main} form a linear Karush--Kuhn--Tucker (KKT) system.  Group its state, input, and multiplier coordinates by node and time, and denote them by \(X_i(t)\), \(U_i(t)\), and \(\Lambda_i(t)\).  The multiplier \(\Lambda_j(-1)\) enforces the initial condition at source node \(j\).  The following graph records which KKT blocks can be nonzero.

\begin{definition}[KKT response graph and distance]\label{def:infinite-graph}
Let \(\mathcal K_\infty=(\mathcal V_\infty,\mathcal E_\infty)\) be the graph of the repeating, nonterminal KKT blocks associated with \eqref{eq:finite-lqr-main}.  Its vertices are
\[
  \begin{aligned}
  \mathcal V_\infty
  ={}&\{X_i(t):t\ge0\}\cup\{U_i(t):t\ge0\}\\
  &{}\cup\{\Lambda_i(-1)\}\cup\{\Lambda_i(t):t\ge0\}.
  \end{aligned}
\]
For all indices for which the vertices exist, its edges are
\[
\begin{array}{rcl}
Q_{ij}\ne0 &:& X_i(t)\sim X_j(t),\\
R_{ij}\ne0 &:& U_i(t)\sim U_j(t),\\
\text{initial constraint} &:& \Lambda_i(-1)\sim X_i(0),\\
\text{state update} &:& \Lambda_i(t)\sim X_i(t+1),\\
A_{ij}\ne0 &:& \Lambda_i(t)\sim X_j(t),\\
B_{ij}\ne0 &:& \Lambda_i(t)\sim U_j(t).
\end{array}
\]
The dense Riccati terminal Hessian is excluded so that the graph distance is independent of the finite horizon; the effect of the dense terminal Hessian \(\bP^\star\) is treated in Lemma~\ref{lem:terminal-separation}.  Assign a physical node and a time layer to each KKT coordinate by
\[
\begin{aligned}
  \operatorname{node}(X_i(t))&=\operatorname{node}(U_i(t))=i,\\
  \operatorname{node}(\Lambda_i(s))&=i,\qquad s\ge-1,\\
  \tau(X_i(t))&=\tau(U_i(t))=t,\\
  \tau(\Lambda_i(-1))&=0,\qquad
  \tau(\Lambda_i(s))=s+1,\quad s\ge0.
\end{aligned}
\]
With \(d_\infty\) denoting ordinary graph distance on \(\mathcal K_\infty\), define the state and input response distances by
\begin{align}
  d_{\st}^X((t,i),(0,j))&:=d_\infty(X_i(t),\Lambda_j(-1)),
  \notag\\
  d_{\st}^U((t,i),(0,j))&:=d_\infty(U_i(t),\Lambda_j(-1)),
  \label{eq:physical-dst-def}
\end{align}
and let
\[
  d_{\st}((t,i),(0,j))
  :=\min\{d_{\st}^X((t,i),(0,j)),d_{\st}^U((t,i),(0,j))\}.
\]
\end{definition}

Figure~\ref{fig:kkt-response-graph} illustrates this construction.
\begin{figure}[!t]
\centering
\resizebox{0.82\linewidth}{!}{\input{Figures/kkt_response_graph_definition_tikz}}
\caption{Construction of the KKT response graph in Definition~\ref{def:infinite-graph}.  (a) Nonzero cost and dynamics blocks form a cell that repeats over time.  (b) Shortest paths from \(\Lambda_j(-1)\) to \(X_i(t)\) and \(U_i(t)\) define the state and input response distances.  The arrows indicate response blocks.}
\label{fig:kkt-response-graph}
\end{figure}

Although \(\bQ^{1/2}\) appears in the detectability condition, the KKT Hessian contains \(\bQ\).  The KKT graph is therefore determined by the sparsity of \(\bQ\), not \(\bQ^{1/2}\).

\subsection{Auxiliary facts on inverse decay}\label{app:inverse-decay}
We use the standard inverse decay principle of Demko--Moss--Smith and its graph extensions \cite{demko1984decay,benzi2007decay,benzi2013decay,shin2022exponential}.  If a self-adjoint block matrix \(H\) has nonzero blocks only along edges of a graph \(\mathcal K\) and
\[
  \sigma(H)\subset[-L,-\gamma]\cup[\gamma,L],
  \qquad 0<\gamma\le L,
\]
then there are constants \(C_H<\infty\) and \(\rho_H\in(0,1)\), depending only on \(L\), \(\gamma\), and fixed local block dimensions, such that
\begin{equation}\label{eq:inverse-decay}
  \norm{(H^{-1})_{ab}}\le C_H\rho_H^{d_{\mathcal K}(a,b)} .
\end{equation}
Inequality~\eqref{eq:inverse-decay} follows from the usual polynomial approximation proof of inverse decay.  The graph determines which blocks of low degree polynomials in \(H\) vanish, while the approximation constants are fixed by the spectral interval.  Hence graph cardinality and the degree of a particular vertex do not enter \(C_H\) or \(\rho_H\).  When the inverse decay estimate \eqref{eq:inverse-decay} is applied to the KKT graph with the Riccati terminal cost, the dense terminal Hessian is included by adding a clique at the terminal layer.  The clique may shorten graph distances to the terminal layer, but it does not change the inverse decay constants.  The separation argument below prevents the terminal clique from shortening the distance between the response coordinates of interest.

\subsection{KKT proof of spatiotemporal response decay}\label{app:kkt-decay-proof}
The proof follows the finite horizon sensitivity mechanism used for distributed LQR gain decay in \cite{shin2023near}, with two changes.  The KKT graph in Definition~\ref{def:infinite-graph} distinguishes state, input, and multiplier blocks, and the sensitivity blocks are interpreted as SLS response coefficients.

\begin{lemma}[Riccati value and uniform KKT regularity]\label{lem:riccati-existence}\label{lem:kkt-conditioning}
Under Assumptions~\ref{ass:lqr-local} and~\ref{ass:lqr-regularity}, the discrete algebraic Riccati equation for \eqref{eq:centralized-lqr} has a stabilizing solution \(\bP^\star\succeq0\) with
\[
  \norm{\bP^\star}\le L_P:=\frac{L^2(L+L^3)}{1-\alpha^2}.
\]
The centralized SLS optimum satisfies
\begin{equation}\label{eq:traceP-cost-equivalence}
  J_N^\star=J_N(\Phi^\star)=\frac{1}{2N}\operatorname{trace}(\bP^\star).
\end{equation}
Moreover, the KKT matrix \(H_{T_{\rm h}}\) associated with \eqref{eq:finite-lqr-main} satisfies
\begin{equation}\label{eq:kkt-conditioning}
  \norm{H_{T_{\rm h}}}\le L_H,\qquad
  \smin(H_{T_{\rm h}})\ge\gamma_H ,
\end{equation}
with constants independent of \(T_{\rm h}\) and \(N\).
\end{lemma}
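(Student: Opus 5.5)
The plan is to prove the three claims in order: the Riccati bound, the trace identity, and then uniform conditioning of \(H_{T_{\rm h}}\). The last is the main step.

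\emph{Riccati bound and trace identity.} Assumption~\ref{ass:lqr-regularity}(a) gives stabilizability and (b) gives detectability of \((\bA,\bQ^{1/2})\). With \(\bR\succeq\gamma I\), the standard LQR theory \cite{bertsekas2012dynamic} then yields a stabilizing solution \(\bP^\star\succeq0\) of the DARE. Moreover, \(\tfrac12x\trans\bP^\star x\) is the optimal cost from \(x\).

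Comparing with the suboptimal policy \(\bu=-\bK_s\bx\) gives \(\bP^\star\preceq\sum_{t\ge0}((\bA-\bB\bK_s)^t)\trans(\bQ+\bK_s\trans\bR\bK_s)(\bA-\bB\bK_s)^t\). Using \(\norm{\bQ+\bK_s\trans\bR\bK_s}\le L+L^3\) and \(\sum_t\norm{(\bA-\bB\bK_s)^t}^2\le L^2/(1-\alpha^2)\), this gives \(\norm{\bP^\star}\le L_P\).

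For \eqref{eq:traceP-cost-equivalence}, an impulse \(\bw(0)=e_k\) produces \(\bx(1)=e_k\), and the optimal SLS response from that time onward is the LQR trajectory from \(e_k\). Its cost is \(\tfrac12e_k\trans\bP^\star e_k\). The SLS objective separates over columns, so the column-wise optimum is exactly this. Summing over \(k\) and dividing by \(N\) gives \(\frac1{2N}\operatorname{trace}\bP^\star\).

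\emph{Upper bound on \(\norm{H_{T_{\rm h}}}\).} Write \(H=\begin{bmatrix}G&E\trans\\E&0\end{bmatrix}\). Here \(G=\diag(\bQ,\bR,\ldots,\bP^\star)\), and \(E\) is the constraint Jacobian, with rows \([I]\) for the initial condition and \([-\bA\ -\bB\ I]\) for the updates. \(E\) is block bidiagonal in time, so \(\norm{E}\le 1+2L\) by bounding each time-shift term separately. Together with \(\norm{G}\le\max\{L,L_P\}\), this gives \(L_H:=\max\{L,L_P\}+2(1+2L)\), independent of \(T_{\rm h}\) and \(N\).

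\emph{Lower bound on \(\smin(H)\) (main obstacle).} I will use the standard saddle-point estimate. If \(\norm{G}\le g\), \(\smin(E)\ge\beta>0\), and \(G\succeq\mu I\) on \(\ker E\), then \(\smin(H)\) is bounded below by an explicit positive function of \((g,\beta,\mu)\), as in \cite{shin2023near}. Two uniform bounds are therefore needed.

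For \(\beta\), the naive inverse of the state columns of \(E\) involves powers \(\bA^k\), which are not uniformly bounded. Instead, exhibit a bounded right inverse: given residuals \((r_{-1},r_0,\ldots)\), solve \(E z=r\) using \(\bu(t)=-\bK_s\bx(t)\). The resulting states are \(\bx(t)=\sum_s(\bA-\bB\bK_s)^{t-1-s}r_s\), and Young's inequality bounds their \(\ell_2\) norm by \(\tfrac{L}{1-\alpha}\norm{r}\). Hence \(\norm{E^\dagger}\le(1+L)L/(1-\alpha)\), giving \(\beta\ge(1-\alpha)/((1+L)L)\).

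For \(\mu\), take \((\bx,\bu)\in\ker E\), so that \(\bx(0)=0\) and the dynamics hold. Rewrite the dynamics as \(\bx(t+1)=(\bA-\bL_s\bC_Q)\bx(t)+\bL_s\bC_Q\bx(t)+\bB\bu(t)\). The same convolution bound gives \(\norm{\bx}_{\ell_2}\le\tfrac{L^2}{1-\alpha}(\norm{\bC_Q\bx}_{\ell_2}+\norm{\bu}_{\ell_2})\); this includes the terminal state \(\bx(T_{\rm h})\), which is driven by earlier steps. Since \(\norm{\bu}^2\le\gamma^{-1}\sum\bu\trans\bR\bu\), \(\norm{\bC_Q\bx}^2=\sum\bx\trans\bQ\bx\), and \(\bP^\star\succeq0\), it follows that \(z\trans Gz\ge\mu\norm{z}^2\) on \(\ker E\), with \(\mu\) depending only on \(L,\alpha,\gamma\).

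The delicate point is handling the terminal state and the initial multiplier row without horizon-dependent constants. I will check this by treating \(\bx(T_{\rm h})\) via the convolution bound from earlier inputs rather than through \(\bP^\star\). Combining \(g\), \(\beta\), and \(\mu\) in the saddle-point estimate then yields \(\gamma_H\) independent of \(T_{\rm h}\) and \(N\).
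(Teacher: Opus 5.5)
Your proposal is correct and follows essentially the same route as the paper: the suboptimal comparison with \(\bK_s\) for \(L_P\), column-wise summation of \(\tfrac12 e_k\trans\bP^\star e_k\) for the trace identity, a \(\bK_s\)-based bounded right inverse of the constraint Jacobian, nullspace coercivity from \(\bL_s\) and Young's inequality, and the saddle-point argument. The differences are cosmetic: you give explicit constants, and you cite the standard saddle-point bound where the paper derives it explicitly.
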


\begin{proof}
Uniform stabilizability and detectability, together with \(\bR\succ0\), give the stabilizing Riccati solution.  Summing its value \(V^\star(e_k)=\frac12e_k^\top\bP^\star e_k\) over unit disturbance coordinates and dividing by \(N\) gives \eqref{eq:traceP-cost-equivalence}.  The value bound follows by evaluating the cost under the stabilizing certificate in Assumption~\ref{ass:lqr-regularity}:
\[
  \bP^\star\preceq\sum_{t\ge0}(\bF_s^t)^\top(\bQ+\bK_s^\top\bR\bK_s)\bF_s^t,
  \qquad \bF_s=\bA-\bB\bK_s .
\]
Uniform stabilizability gives the displayed \(L_P\).

We next establish KKT constants independent of network size and horizon.  Write the finite horizon equality constraints as \(F_{T_{\rm h}}y=b\), with \(y\) collecting state and input blocks in time order, and let \(G_{T_{\rm h}}=\diag(\bQ,\bR,\ldots,\bQ,\bR,\bP^\star)\) be the Hessian.  The upper bound \(\norm{H_{T_{\rm h}}}\le L_H\) follows from Assumption~\ref{ass:lqr-local} and \(\norm{\bP^\star}\le L_P\).  The terminal matrix \(\bP^\star\) may be dense, but only its operator norm is used in this bound; no sparsity of \(\bP^\star\) is required for conditioning.

The stabilizing certificate constructs a uniformly bounded right inverse of \(F_{T_{\rm h}}\).  Given an arbitrary residual sequence \(r_{-1},r_0,\ldots,r_{T_{\rm h}-1}\) for the initial and dynamics equalities, choose \(x(0)=r_{-1}\), \(u(t)=-\bK_s x(t)\), and \(x(t+1)=(\bA-\bB\bK_s)x(t)+r_t\).  This produces \(F_{T_{\rm h}}y=r\).  The geometric bound \eqref{eq:uniform-stab-gain} and convolution with \(\{\alpha^t\}_{t\ge0}\) give
\[
  F_{T_{\rm h}}Y_{T_{\rm h}}=I,\qquad
  \norm{Y_{T_{\rm h}}}\le c_F ,
\]
with \(c_F\) independent of \(T_{\rm h}\) and \(N\).

We next spell out the nullspace curvature estimate.  Let \(y=(x,u)\in\ker F_{T_{\rm h}}\), so \(x(0)=0\) and \(x(t+1)=\bA x(t)+\bB u(t)\).  With \(\bC_Q=\bQ^{1/2}\), write
\[
  x(t+1)=(\bA-\bL_s\bC_Q)x(t)+\bL_s\bC_Qx(t)+\bB u(t).
\]
Iterating from \(x(0)=0\) gives
\[
  x(t)=\sum_{\tau=0}^{t-1}
  (\bA-\bL_s\bC_Q)^{t-1-\tau}
  \big(\bL_s\bC_Qx(\tau)+\bB u(\tau)\big).
\]
By \eqref{eq:uniform-det-gain}, Young's inequality for causal convolution, and the bounds on \(\bL_s,\bB\), there is \(c_d<\infty\), independent of \(T_{\rm h},N\), such that
\begin{equation}\label{eq:detectability-energy-expanded}
  \norm{x}_{\ell_2(0:T_{\rm h})}
  \le
  c_d\big(
  \norm{\bC_Qx}_{\ell_2(0:T_{\rm h}-1)}
  +\norm{u}_{\ell_2(0:T_{\rm h}-1)}
  \big).
\end{equation}
Consequently,
\begin{align*}
  &\norm{x}_{\ell_2(0:T_{\rm h})}^2
  +\norm{u}_{\ell_2(0:T_{\rm h}-1)}^2\\
  &\qquad\le
  (1+2c_d^2)\big(
  \norm{\bC_Qx}_{\ell_2(0:T_{\rm h}-1)}^2
  +\norm{u}_{\ell_2(0:T_{\rm h}-1)}^2
  \big).
\end{align*}
Since \(\bR\succeq\gamma I\) and \(\bP^\star\succeq0\), the terminal contribution may be discarded and
\begin{equation}\label{eq:nullspace-curvature-expanded}
  y^\top G_{T_{\rm h}}y
  \ge
  c_G\norm{y}^2,
  \
  y\in\ker F_{T_{\rm h}},
  \
  c_G:=\frac{\min\{1,\gamma\}}{1+2c_d^2}.
\end{equation}
Thus the detectability certificate controls unpenalized state directions through \(\bQ^{1/2}\), while \(\bR\succeq\gamma I\) controls input directions; all constants are uniform in the horizon and network size.  The lower bound does not use the terminal Hessian except for its nonnegativity, so the density of \(\bP^\star\) cannot degrade the coercivity constant.  It remains to convert the right inverse and nullspace curvature into a singular value bound for the saddle point matrix
\[
  H_{T_{\rm h}}=
  \begin{bmatrix}
    G_{T_{\rm h}} & F_{T_{\rm h}}^\top\\
    F_{T_{\rm h}} & 0
  \end{bmatrix}.
\]
Since \(F_{T_{\rm h}}Y_{T_{\rm h}}=I\), \(\sigma_{\min}(F_{T_{\rm h}})\ge c_F^{-1}\).  To bound \(H_{T_{\rm h}}^{-1}\), consider
\[
  G_{T_{\rm h}}y+F_{T_{\rm h}}^\top\lambda=f,
  \qquad F_{T_{\rm h}}y=g.
\]
Set \(y_g=Y_{T_{\rm h}}g\) and \(z=y-y_g\in\ker F_{T_{\rm h}}\).  Taking the inner product of the first equation with \(z\), using \(z^\top F_{T_{\rm h}}^\top\lambda=0\), and applying \eqref{eq:nullspace-curvature-expanded} gives
\begin{equation}\label{eq:kkt-primal-bound}
  \norm{z}
  \le c_G^{-1}\big(\norm{f}+\norm{G_{T_{\rm h}}}c_F\norm{g}\big),
  \
  \norm{y}\le c_F\norm{g}+\norm{z}.
\end{equation}
Moreover, \(F_{T_{\rm h}}^\top\) has smallest singular value at least \(c_F^{-1}\), and therefore
\begin{equation}\label{eq:kkt-dual-bound}
  \norm{\lambda}
  \le c_F\norm{f-G_{T_{\rm h}}y}.
\end{equation}
Equations~\eqref{eq:kkt-primal-bound} and \eqref{eq:kkt-dual-bound}, together with the uniform bound on \(\norm{G_{T_{\rm h}}}\), show that \(\norm{H_{T_{\rm h}}^{-1}}\) depends only on \(c_F,c_G\), and \(L_H\).  Thus \(\smin(H_{T_{\rm h}})\ge\gamma_H>0\), uniformly in \(T_{\rm h},N\).  This is the regularity mechanism used in \cite{shin2023near}, written in the variables of \eqref{eq:finite-lqr-main}.  The estimate uses \(\bQ^{1/2}\) only as an energy certificate, not to define KKT sparsity.
\end{proof}

For a horizon \(T_{\rm h}\), let \(\mathcal K_{T_{\rm h}}^{\rm term}\) be the finite block graph of the full KKT matrix with the Riccati terminal cost.  It contains all repeating edges in Definition~\ref{def:infinite-graph} up to the horizon and the terminal edges \(X_i(T_{\rm h})\sim X_j(T_{\rm h})\) induced by \((\bP^\star)_{ij}\).  Let \(d_{T_{\rm h}}^{\rm term}\) be its graph distance.  By \eqref{eq:kkt-conditioning} and \eqref{eq:inverse-decay},
\begin{equation}\label{eq:uniform-H-inverse-decay}
  \norm{(H_{T_{\rm h}}^{-1})_{ab}}
  \le C_H\rho_H^{d_{T_{\rm h}}^{\rm term}(a,b)}
\end{equation}
uniformly in \(T_{\rm h}\) and \(N\).  The distance \(d_\infty\), computed without edges from the terminal Hessian, is related to the finite graph by the following separation lemma.

\begin{lemma}[Terminal separation for nonterminal KKT vertices]\label{lem:terminal-separation}
Let \(a,b\in\mathcal K_\infty\) be vertices before the terminal layer, with time layers \(\tau(a)\) and \(\tau(b)\) defined in Definition~\ref{def:infinite-graph}.  Suppose the finite graph up to \(T_{\rm h}\), before adding the terminal Hessian edges, contains \(a,b\) and at least one shortest path in \(\mathcal K_\infty\) between them.  If
\begin{equation}\label{eq:terminal-separation-condition}
  2T_{\rm h}-\tau(a)-\tau(b)-c_{\rm term}
  >
  d_\infty(a,b),
\end{equation}
where \(c_{\rm term}\) is a fixed constant determined by the offsets between KKT coordinate types, then
\begin{equation}\label{eq:terminal-separation-equality}
  d_{T_{\rm h}}^{\rm term}(a,b)=d_\infty(a,b).
\end{equation}
The constant \(c_{\rm term}\) depends only on the KKT block convention and not on \(T_{\rm h}\), \(N\), or the density of \(\bP^\star\).
\end{lemma}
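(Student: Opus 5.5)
We prove \eqref{eq:terminal-separation-equality} by comparing path lengths in the two graphs. Adding edges can only shorten distances, so \(d_{T_{\rm h}}^{\rm term}(a,b)\le d_\infty(a,b)\) is immediate. The hypothesis that the finite graph before terminal edges contains a shortest \(\mathcal K_\infty\) path between \(a\) and \(b\) is what makes this valid. Without the terminal clique, the finite graph is an induced subgraph of \(\mathcal K_\infty\), so a path avoiding terminal Hessian edges has the same length in both graphs.

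For the reverse inequality, take a shortest path \(\pi\) in \(\mathcal K_{T_{\rm h}}^{\rm term}\) from \(a\) to \(b\). If \(\pi\) uses no terminal Hessian edge \(X_i(T_{\rm h})\sim X_j(T_{\rm h})\), it is a path in the finite graph, hence in \(\mathcal K_\infty\), so its length is at least \(d_\infty(a,b)\). Otherwise \(\pi\) visits the terminal layer. We split \(\pi\) at its first and last terminal-layer vertices, giving a segment from \(a\) up to layer \(T_{\rm h}\) and a segment from layer \(T_{\rm h}\) back to \(b\), with any clique edges in between.

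The key estimate is a Lipschitz property of the time layer map \(\tau\) along edges. Checking the edge list in Definition~\ref{def:infinite-graph} with the assigned layers:
\begin{itemize}
\item cost edges \(X_i(t)\sim X_j(t)\) and \(U_i(t)\sim U_j(t)\) keep \(\tau\) fixed;
\item the initial edge \(\Lambda_i(-1)\sim X_i(0)\) has \(\tau\)-change \(0\);
\item the edge \(\Lambda_i(t)\sim X_i(t+1)\) has \(\tau\)-change \(0\);
\item the edges \(\Lambda_i(t)\sim X_j(t)\) and \(\Lambda_i(t)\sim U_j(t)\) have \(\tau\)-change \(1\).
\end{itemize}
So \(|\tau(e^+)-\tau(e^-)|\le1\) on every edge. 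Because some edges do not advance \(\tau\) at all, we also need the coarse bound that reaching layer \(T_{\rm h}\) from a vertex at layer \(\tau(a)\) takes at least \(T_{\rm h}-\tau(a)-c_1\) edges. Here \(c_1\) absorbs the offset between coordinate types, for example terminal-layer \(X\) versus \(\Lambda\) layers. Hence any path through the terminal layer has length at least \(2T_{\rm h}-\tau(a)-\tau(b)-c_{\rm term}\) with \(c_{\rm term}=2c_1\). By \eqref{eq:terminal-separation-condition} this exceeds \(d_\infty(a,b)\), contradicting minimality of \(\pi\), since the shortest \(\mathcal K_\infty\) path is available in the finite graph and has length \(d_\infty(a,b)\). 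Therefore \(\pi\) avoids terminal edges, and equality follows.

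The main obstacle is bookkeeping \(c_{\rm term}\) correctly. Whether the terminal layer is entered via \(X_i(T_{\rm h})\) or via \(\Lambda_i(T_{\rm h}-1)\), the latter at layer \(T_{\rm h}\), determines the offsets. The cleanest check is to verify that along any path, \(\tau\) increases by at most one per edge, which yields the required inequality with a small explicit constant such as \(c_{\rm term}=2\). This constant is independent of \(N\), \(T_{\rm h}\), and \(\bP^\star\), since only layer indices enter.
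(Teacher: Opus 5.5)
Your proposal is correct and follows essentially the same argument as the paper. The shortest \(\mathcal K_\infty\) path lying in the finite graph gives \(d_{T_{\rm h}}^{\rm term}(a,b)\le d_\infty(a,b)\), and finite paths that avoid the terminal edges are paths in \(\mathcal K_\infty\). Any path through the \(\bP^\star\) clique must climb to layer \(T_{\rm h}\) and return, which costs at least \(2T_{\rm h}-\tau(a)-\tau(b)\) edges because every edge changes the layer by at most one.

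Your explicit edge-by-edge layer check is a small sharpening of the paper's generic offset \(c_0\): it shows that no offset is actually needed. So your remark that non-advancing edges force an extra constant is unnecessary, though harmless, since any \(c_{\rm term}\ge0\) works.
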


\begin{proof}
Because a shortest path that avoids the terminal Hessian is present in the finite graph, \(d_{T_{\rm h}}^{\rm term}(a,b)\le d_\infty(a,b)\).  Any finite path that avoids the terminal Hessian is also a path in \(\mathcal K_\infty\), so it cannot be shorter than \(d_\infty(a,b)\).  A path that uses \(\bP^\star\) must first reach a terminal state vertex \(X_i(T_{\rm h})\), cross one dense terminal edge, and return from some \(X_j(T_{\rm h})\) to \(b\).  The list of KKT coordinate types is finite, and every edge before the terminal layer changes the layer by at most one.  Thus there is a fixed \(c_0\) such that reaching the terminal layer from any earlier vertex \(v\) requires at least \(T_{\rm h}-\tau(v)-c_0\) edges.  Every path through the terminal clique therefore has length at least
\[
  (T_{\rm h}-\tau(a)-c_0)+1+(T_{\rm h}-\tau(b)-c_0),
\]
which is at least \(2T_{\rm h}-\tau(a)-\tau(b)-c_{\rm term}\) after setting \(c_{\rm term}=2c_0+1\).  Under \eqref{eq:terminal-separation-condition} such a path is longer than \(d_\infty(a,b)\).  Hence no shortest finite path uses the terminal layer, proving \eqref{eq:terminal-separation-equality}.
\end{proof}

For every fixed pair \(a,b\), the finite graph contains the required path for all sufficiently large \(T_{\rm h}\).  For example, it is enough to take \(T_{\rm h}\ge \max\{\tau(a),\tau(b)\}+d_\infty(a,b)\).  Hence the horizon can be chosen to satisfy Lemma~\ref{lem:terminal-separation} for each response coefficient without changing that coefficient.

\begin{proposition}[Response decay in KKT distance]\label{prop:kkt-response-decay}
Under Assumptions~\ref{ass:lqr-local} and~\ref{ass:lqr-regularity}, there exist constants \(C>0\) and \(\rho\in(0,1)\), independent of \(N\), such that for all \(i,j\in\calV\) and all \(t\ge0\),
\begin{align}
  \norm{\Phix^\star[t+1]_{ij}}&\le C\rho^{d_{\st}^X((t,i),(0,j))},
  \notag\\
  \norm{\Phiu^\star[t+1]_{ij}}&\le C\rho^{d_{\st}^U((t,i),(0,j))}.
  \label{eq:typed-response-decay}
\end{align}
Consequently,
\begin{equation}\label{eq:phi-decay}
  \norm{\Phix^\star[t+1]_{ij}}+
  \norm{\Phiu^\star[t+1]_{ij}}
  \le 2C\rho^{d_{\st}((t,i),(0,j))}.
\end{equation}
\end{proposition}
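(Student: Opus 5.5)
The approach is to read each optimal response coefficient as an entry of the inverse KKT matrix of the finite horizon problem \eqref{eq:finite-lqr-main}, and then apply the uniform inverse decay \eqref{eq:uniform-H-inverse-decay} together with the terminal separation in Lemma~\ref{lem:terminal-separation}.

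\emph{Sensitivity representation.} Fix \(i,j\), \(t\ge0\), and a coordinate \(e\) of the local initial condition at node \(j\). A unit disturbance impulse \(w_j(0)=e\) gives \(\bx(1)=e\) placed at node \(j\). The centralized optimal trajectory from this state is then \(\bx(s+1)=\Phix^\star[s+1]e\) and \(\bu(s+1)=\Phiu^\star[s+1]e\), up to the index shift fixed in Section~\ref{sec:setup}. By the Riccati terminal cost argument, the solution of \eqref{eq:finite-lqr-main} with \(\xbar=e\) reproduces this trajectory on its first \(T_{\rm h}\) steps. Its KKT system is \(H_{T_{\rm h}}y=b\), where the right-hand side \(b\) is nonzero only in the initial-constraint block \(\Lambda_j(-1)\) and equals \(e\) there. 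Hence
\[
X_i(t)=(H_{T_{\rm h}}^{-1})_{X_i(t),\Lambda_j(-1)}\,e, \qquad U_i(t)=(H_{T_{\rm h}}^{-1})_{U_i(t),\Lambda_j(-1)}\,e.
\]
So \(\Phix^\star[t+1]_{ij}\) and \(\Phiu^\star[t+1]_{ij}\) are exactly these KKT inverse blocks. I will check the sign convention of the multiplier row carefully, but signs do not affect norms.

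\emph{Decay.} Lemma~\ref{lem:kkt-conditioning} gives \(\norm{H_{T_{\rm h}}}\le L_H\) and \(\smin(H_{T_{\rm h}})\ge\gamma_H\). Since \(H_{T_{\rm h}}\) is symmetric, its spectrum lies in \([-L_H,-\gamma_H]\cup[\gamma_H,L_H]\). Inequality \eqref{eq:uniform-H-inverse-decay} then bounds each block by \(C_H\rho_H^{d^{\rm term}_{T_{\rm h}}}\), with constants uniform in \(T_{\rm h}\) and \(N\). Set \(a=X_i(t)\) or \(U_i(t)\), and \(b=\Lambda_j(-1)\), so that \(\tau(b)=0\). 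Now choose \(T_{\rm h}\) large, for instance
\[
T_{\rm h}\ge t+d_\infty(a,b)+c_{\rm term}+1 .
\]
This ensures that the finite graph contains a shortest \(\mathcal K_\infty\)-path from \(a\) to \(b\): every edge changes the time layer by at most one, so such a path stays within layers \(\le t+d_\infty(a,b)\). It also ensures that \eqref{eq:terminal-separation-condition} holds. Lemma~\ref{lem:terminal-separation} then gives \(d^{\rm term}_{T_{\rm h}}=d_\infty=d_{\st}^X\) (respectively \(d_{\st}^U\)). The coefficient itself does not depend on \(T_{\rm h}\), so this yields \eqref{eq:typed-response-decay} with \(C=C_H\sqrt{d_{\rm blk}}\) and \(\rho=\rho_H\). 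The \(\sqrt{d_{\rm blk}}\) factor converts the vector bound over the coordinates \(e\) into a block bound, using Assumption~\ref{ass:lqr-local}(a).

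\emph{Combined bound.} Inequality \eqref{eq:phi-decay} follows because each typed distance is at least the minimum \(d_{\st}\) and \(\rho<1\), so each term is at most \(C\rho^{d_{\st}}\).

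\emph{Main obstacle.} The delicate step is the identification of the response coefficient with an inverse block of a matrix whose constants are uniform. This requires three things: that the terminal-cost problem reproduces the infinite horizon trajectory exactly; that the index shift between \(t\) and \(t+1\) is consistent with the time layers \(\tau\); and that the decay constants in \eqref{eq:inverse-decay} are insensitive to the dense terminal clique. The last point holds because the constants there depend only on the spectral interval.
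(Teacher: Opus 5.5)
Your proposal is correct and follows essentially the same route as the paper: you identify each response block with the block of \(H_{T_{\rm h}}^{-1}\) coupling \(X_i(t)\) or \(U_i(t)\) to \(\Lambda_j(-1)\), choose \(T_{\rm h}\) large enough for Lemma~\ref{lem:terminal-separation}, and apply the uniform inverse decay bound \eqref{eq:uniform-H-inverse-decay}. The differences are minor. The response block equals that inverse block up to the sign of the injection, so your \(\sqrt{d_{\rm blk}}\) factor is harmless but unnecessary; the paper writes \(C=C_H\sup_j\norm{M_j}\). Your explicit choice \(T_{\rm h}\ge t+d_\infty(a,b)+c_{\rm term}+1\) is a slightly more careful version of the paper's horizon selection.
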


\begin{proof}[Proof of Proposition~\ref{prop:kkt-response-decay}]
The Riccati terminal cost makes the finite horizon optimizer agree with the infinite horizon LQR trajectory on the first \(T_{\rm h}\) steps.  Fix \(i,j\) and \(t\), and choose \(T_{\rm h}>t\) so that the finite graph contains shortest paths from \(X_i(t)\) and \(U_i(t)\) to \(b=\Lambda_j(-1)\), and so that Lemma~\ref{lem:terminal-separation} applies to both pairs.  Perturbing the initial condition at node \(j\) affects only the initial multiplier row \(\Lambda_j(-1)\); let \(M_j\) denote the corresponding signed injection matrix, with \(\sup_j\norm{M_j}<\infty\).  Differentiating the KKT system gives
\begin{align*}
  \Phix^\star[t+1]_{ij}
  &=\Pi_{X_i(t)}H_{T_{\rm h}}^{-1}\Pi_{\Lambda_j(-1)}^\top M_j,\\
  \Phiu^\star[t+1]_{ij}
  &=\Pi_{U_i(t)}H_{T_{\rm h}}^{-1}\Pi_{\Lambda_j(-1)}^\top M_j .
\end{align*}
Thus one admissible choice of response decay constants is
\begin{equation}\label{eq:admissible-response-constants}
  \rho:=\rho_H,\qquad C:=C_H\sup_j\norm{M_j}.
\end{equation}
Using \eqref{eq:uniform-H-inverse-decay} proves \eqref{eq:typed-response-decay}--\eqref{eq:phi-decay}.
\end{proof}

\begin{proof}[Proof of Theorem~\ref{thm:physical-light-cone}]
For \(V\in\{X,U\}\), every KKT path from \(\Lambda_j(-1)\) to \(V_i(t)\) must create net time displacement \(t\) and physical displacement \(d_{\calG}(i,j)\).  Each edge in Definition~\ref{def:infinite-graph} changes the time layer by at most one and spans at most \(v_{\rm cone}\) physical graph hops.  Hence a path of length \(\ell\) satisfies \(\ell\ge t\) and \(\ell\ge d_{\calG}(i,j)/v_{\rm cone}\), so
\begin{align*}
  d_{\st}^V((t,i),(0,j))
  &\ge
  \max\{t,d_{\calG}(i,j)/v_{\rm cone}\}
  \\
  &=
  t+\frac{[d_{\calG}(i,j)-v_{\rm cone}t]_+}{v_{\rm cone}} .
\end{align*}
Substitution into Proposition~\ref{prop:kkt-response-decay} gives \eqref{eq:physical-light-cone-decay} with \(C_{\rm lc}=2C\), \(\rho_{\rm lc}=\rho\), and \(c_{\rm lc}=1/v_{\rm cone}\).
\end{proof}

\subsection{Proofs for control architecture selection from response decay}\label{app:mask-selection-proofs}
\subsubsection{Tail bounds for implementation tap masks}\label{app:mask-tail-bounds}

For \(\tau,d\in\Z_{\ge0}\), define the physical decay index
\begin{equation}\label{eq:physical-decay-index}
  s(\tau,d):=\tau+\frac{[d-v_{\rm cone}\tau]_+}{v_{\rm cone}}
  =\max\{\tau,d/v_{\rm cone}\}.
\end{equation}

\begin{lemma}[Summability of physical decay shells]\label{lem:row-slice-tail-from-growth}\label{lem:column-slice-tail-from-growth}
Under Assumption~\ref{ass:growth}, for \(q\in\Z_{\ge0}\), let \(h(q)\) be the largest number of state and input response blocks in any fixed block row or block column whose indices satisfy \(q\le s(\tau,d_{\calG}(i,j))<q+1\).  Then \(h\) is bounded by a subexponential function independent of \(N\).  Consequently, for every \(\rho\in(0,1)\) and every \(\delta\in(\rho,1)\), there is \(M_\delta<\infty\), independent of \(N\), such that
\begin{equation}\label{eq:physical-shell-tail}
  \sum_{q\ge r}h(q)\rho^q\le M_\delta\delta^r,
  \qquad r\in\Z_{\ge0}.
\end{equation}
\end{lemma}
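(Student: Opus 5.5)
The plan is to bound \(h(q)\) by counting the index pairs \((\tau,d)\) that fall in the shell \(q\le s(\tau,d)<q+1\), and then to use Assumption~\ref{ass:growth} to count the nodes at each distance. Fix a block row \(i\); the block-column case is identical because the shell bound \eqref{eq:growth} holds at every centre node. Each block is indexed by a tap \(\tau\), a source node \(j\), and one of the two types (state or input). Since \(s(\tau,d)=\max\{\tau,d/v_{\rm cone}\}\), the condition \(s<q+1\) forces \(\tau<q+1\) and \(d<v_{\rm cone}(q+1)\). The condition \(s\ge q\) forces \(\tau\ge q\) or \(d\ge v_{\rm cone}q\).

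Dropping the lower constraint only enlarges the set, so
\[
h(q)\le 2\sum_{\tau=0}^{q}\ \sum_{d=0}^{\lceil v_{\rm cone}(q+1)\rceil}p(d)
\le 2(q+1)\sum_{d=0}^{\lceil v_{\rm cone}(q+1)\rceil}p(d)=:\tilde h(q).
\]
This bound is independent of \(N\) and of the chosen row. It remains to check that \(\tilde h\) is subexponential, i.e.\ that \(\sup_q\tilde h(q)\nu^q<\infty\) for every \(\nu\in(0,1)\). Fix \(\nu\) and set \(\mu:=\nu^{1/(2v_{\rm cone})}\in(0,1)\). Assumption~\ref{ass:growth} gives \(p(d)\le P_\mu\mu^{-d}\). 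Summing over \(d\le v_{\rm cone}(q+1)+1\) yields at most \(P_\mu\,(1-\mu)^{-1}\mu^{-v_{\rm cone}(q+1)-2}\), and \(\mu^{-v_{\rm cone}q}=\nu^{-q/2}\). Hence \(\tilde h(q)\nu^q\le c_\nu (q+1)\nu^{q/2}\), which is bounded in \(q\).

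The tail estimate \eqref{eq:physical-shell-tail} then follows directly. Given \(\rho<\delta<1\), apply the subexponential bound with \(\nu:=\rho/\delta'\) for some \(\delta'\in(\rho,\delta)\); this gives \(h(q)\rho^q\le K\delta'^q\). Therefore
\[
\sum_{q\ge r}h(q)\rho^q\le K\delta'^r/(1-\delta')\le M_\delta\delta^r .
\]
One may even take \(\delta'=\delta\); the intermediate \(\delta'\) merely leaves slack.

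The main obstacle is keeping track of the conversion between the spatial growth rate and the time-scaled shell. The number of distances in a shell grows like \(v_{\rm cone}q\), so the exponent in \(\mu\) has to be rescaled by \(v_{\rm cone}\), as done above. The factor \((q+1)\) coming from the tap range is then harmless, because it is only polynomial. Nothing else depends on \(N\), since \(p\) is uniform in the centre node.
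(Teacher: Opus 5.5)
Your proposal is correct and follows the paper's proof. It uses the same inclusion of the shell in \(\{\tau\le q,\ d\le\lceil v_{\rm cone}(q+1)\rceil\}\), the same counting bound \(h(q)\le c(q+1)\sum_{d}p(d)\), and the same geometric summation with \(\nu=\rho/\delta\). The only addition is that you verify explicitly, via \(\mu=\nu^{1/(2v_{\rm cone})}\), that the counting bound is subexponential, which the paper simply asserts.
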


\begin{proof}
The condition \(s(\tau,d)<q+1\) implies \(0\le\tau<q+1\) and \(0\le d<v_{\rm cone}(q+1)\).  For either a fixed response location or a fixed disturbance source, Assumption~\ref{ass:growth} therefore gives
\[
  h(q)
  \le c_{\rm type}(q+1)
  \sum_{d=0}^{\lceil v_{\rm cone}(q+1)\rceil}p(d),
\]
where \(c_{\rm type}\) accounts only for the fixed state and input coordinate types.  The right hand side is subexponential because \(p\) is subexponential.  Thus \(\bar M_\delta:=\sup_{q\ge0}h(q)(\rho/\delta)^q<\infty\).  Since \(h(q)\rho^q\le\bar M_\delta\delta^q\), summing the geometric series proves \eqref{eq:physical-shell-tail} with \(M_\delta=\bar M_\delta/(1-\delta)\).  The same count applies to rows and columns because \(d_{\calG}\) is symmetric.
\end{proof}

\begin{lemma}[Block row bound for normalized response energy]\label{lem:l1-to-h2}
Any stable block transfer matrix \(E\), with a fixed number \(n_{\rm rowtype}\) of block row types per physical node, satisfies
\begin{equation}\label{eq:l1-to-h2}
  \norm{E}_{\barHtwo,N}
  \le \sqrt{n_{\rm rowtype}}\,\norm{E}_{\calL_1,\row}.
\end{equation}
For any static \(\bW\) with \(\norm{\bW}\le L_W\),
\begin{equation}\label{eq:weighted-l1-to-h2}
  \norm{\bW E}_{\barHtwo,N}
  \le L_W\sqrt{n_{\rm rowtype}}\,\norm{E}_{\calL_1,\row}.
\end{equation}
\end{lemma}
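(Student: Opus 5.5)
The plan is to compare the Frobenius energy of each block row with its squared $\calL_1$ row sum, and then average over rows. First fix notation. Block rows of $E$ are indexed by $(a,\text{type})$, where $a\in\calV$ is a physical node and the type ranges over at most $n_{\rm rowtype}$ values (for example, state and input rows). For each block row $a'$, set
\[
  s_{a'}:=\sum_{\tau\ge0}\sum_b\norm{E[\tau]_{a'b}}_F .
\]
By definition, $s_{a'}\le\norm{E}_{\calL_1,\row}$ for every $a'$.

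The key elementary step is to bound the Frobenius energy of block row $a'$ by the square of its $\ell_1$ sum. This follows because $\sum_k c_k^2\le(\sum_k c_k)^2$ for nonnegative $c_k$. Hence
\[
  \sum_{\tau\ge0}\sum_b\norm{E[\tau]_{a'b}}_F^2\le s_{a'}^2\le\norm{E}_{\calL_1,\row}^2 .
\]
Since the Frobenius norm of a block matrix squares additively over blocks, the total is
\[
  \sum_{\tau\ge0}\norm{E[\tau]}_F^2=\sum_{a'}\sum_{\tau\ge0}\sum_b\norm{E[\tau]_{a'b}}_F^2\le(\#\text{block rows})\,\norm{E}_{\calL_1,\row}^2 .
\]
The number of block rows is at most $n_{\rm rowtype}N$. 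Dividing by $N$ and taking square roots gives \eqref{eq:l1-to-h2}. Stability of $E$ is only needed so that the series converge; if $\norm{E}_{\calL_1,\row}=\infty$, the bound is trivial.

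For \eqref{eq:weighted-l1-to-h2}, I will avoid the $\calL_1$ norm of $\bW E$, since $\bW$ need not be local. I will instead use the normalized energy directly. For each tap,
\[
  \norm{\bW E[\tau]}_F\le\norm{\bW}\,\norm{E[\tau]}_F\le L_W\norm{E[\tau]}_F ,
\]
because an induced-norm factor on the left can be pulled out of the Frobenius norm. Summing the squares over $\tau$ gives
\[
  \norm{\bW E}_{\barHtwo,N}\le L_W\norm{E}_{\barHtwo,N},
\]
and the first inequality then finishes the proof.

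There is no real obstacle here. The only point needing care is the counting of block rows, so that the factor is $n_{\rm rowtype}N$ rather than depending on local dimensions. Local dimensions do not enter, because $\norm{\cdot}_F$ is used blockwise and adds exactly across blocks.
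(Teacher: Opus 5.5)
Your proof is correct and follows the paper's argument. You bound each block row's tap-summed Frobenius energy by its squared $\calL_1$ row sum, count at most $n_{\rm rowtype}N$ block rows, normalize by $N$, and obtain the weighted bound from $\norm{\bW E[\tau]}_F\le\norm{\bW}\,\norm{E[\tau]}_F$, which is the submultiplicativity step the paper invokes.
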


\begin{proof}
For each block row \(a\), the squared Frobenius norm of all its blocks and taps is no larger than the square of their summed Frobenius norms.  There are at most \(n_{\rm rowtype}N\) block rows.  Summing these inequalities and dividing by \(N\) gives \eqref{eq:l1-to-h2}.  The submultiplicativity of the normalized \(\mathcal H_2\) norm gives \eqref{eq:weighted-l1-to-h2}.
\end{proof}

\begin{lemma}[Uniform bound for the SLS affine map]\label{lem:block-row-bounds}\label{lem:sls-affine-bound}
Under Assumptions~\ref{ass:lqr-local} and~\ref{ass:growth}, there is \(L_{\rm sls}<\infty\), independent of \(N\), such that
\begin{align}
  \norm{\calZ E}_{\calL_1,\row}
  &\le L_{\rm sls}\norm{E}_{\calL_1,\row},\notag\\
  \norm{\calZ E}_{\calL_1,\col}
  &\le L_{\rm sls}\norm{E}_{\calL_1,\col}.
  \label{eq:sls-affine-bounded}
\end{align}
\end{lemma}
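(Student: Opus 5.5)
The approach is to write \(\calZ E\) explicitly in the time domain and bound each block row (or column) sum by the locality and norm bounds of \(\bA\) and \(\bB\) from Assumption~\ref{ass:lqr-local}. For a stacked transfer matrix \(E=\operatorname{col}(E_x,E_u)\) with taps \(E[\tau]\), we have
\[
  (\calZ E)[\tau]=E_x[\tau+1]-\bA E_x[\tau]-\bB E_u[\tau],
\]
where the shift comes from multiplication by \(z\). The term \(zE_x\) is a pure index shift. Summing over all taps \(\tau\) therefore reproduces the same per-row \(\calL_1\) mass, with at most the tap that shifts to index \(-1\) also included; for strictly proper \(E\) this tap is zero, and in any case it only adds mass. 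The shift thus contributes at most \(\norm{E_x}_{\calL_1,\row}\le\norm{E}_{\calL_1,\row}\), and likewise for columns.

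The multiplication terms are bounded blockwise. Fix a block row \(a\) of \(\bA E_x[\tau]\). Then
\[
  \sum_\tau\sum_b\Bigl\|\sum_c A_{ac}E_x[\tau]_{cb}\Bigr\|_F
  \le\sum_{c:\,d_{\calG}(a,c)\le r_0}\norm{A_{ac}}\sum_\tau\sum_b\norm{E_x[\tau]_{cb}}_F .
\]
The block \(A_{ac}\) is a submatrix of \(\bA\), so \(\norm{A_{ac}}\le L\). The number of nodes \(c\) within distance \(r_0\) of \(a\) is at most \(\sum_{d\le r_0}p(d)=:n_{r_0}\) by Assumption~\ref{ass:growth}, which is independent of \(N\). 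Hence the row sum is at most \(Ln_{r_0}\norm{E}_{\calL_1,\row}\). The same argument applies to \(\bB E_u\). Here the Frobenius-versus-induced norm issue is handled by \(\norm{MX}_F\le\norm{M}\norm{X}_F\), and the bounded block dimensions \(d_{\rm blk}\) only affect constants if one prefers Frobenius bounds on the blocks \(A_{ac}\).

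For the column norm, fix a source column \(b\). Then
\[
  \sum_a\norm{A_{ac}E_x[\tau]_{cb}}_F
\]
summed over \(a\) within distance \(r_0\) of \(c\) is again at most \(Ln_{r_0}\) times \(\norm{E_x[\tau]_{cb}}_F\). Summing over \(c\) and \(\tau\) gives the column bound, using the symmetry of \(d_{\calG}\). Collecting the three terms gives
\[
  L_{\rm sls}:=1+2Ln_{r_0}.
\]

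There is no serious obstacle. The only point needing care is that the neighborhood count enters uniformly. This requires a bounded number of nodes within radius \(r_0\), which Assumption~\ref{ass:growth} supplies; only finite-radius balls are needed, so subexponential growth is more than enough.
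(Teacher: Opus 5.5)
Your proposal is correct and follows essentially the same route as the paper. Both arguments treat \(\calZ E\) as a time shift plus multiplication by \(\bA\) and \(\bB\). Locality within radius \(r_0\), together with the growth bound, makes the block row and column sums of \(\bA\) and \(\bB\) uniform in \(N\), and the triangle and product inequalities finish the proof. Your version simply writes these steps out and makes the constant explicit as \(L_{\rm sls}=1+2Ln_{r_0}\).
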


\begin{proof}
The map \(\calZ E=(zI-\bA)E_x-\bB E_u\) consists of a time shift and multiplication by \(\bA\) and \(\bB\).  Assumption~\ref{ass:lqr-local} confines every nonzero block of these matrices to distance \(r_0\), while Assumption~\ref{ass:growth} bounds the number of such blocks in every row and column.  Their block row and column \(\calL_1\) norms are therefore bounded uniformly in \(N\).  Applying the triangle and product inequalities for these norms proves \eqref{eq:sls-affine-bounded}.
\end{proof}

\begin{lemma}[Truncation tail and residual estimates]\label{lem:truncation-fir}\label{lem:aux-tail-residual-estimates}
Under the hypotheses of Theorem~\ref{thm:tail}, the truncation induced by \(\calM_\theta\) is FIR and stable.  There exist constants and rates, independent of \(N\) and \(\theta=(\kappa,\bar\kappa,T)\), such that
\begin{equation}\label{eq:tail-impl-kbarT}
  \norm{\Phi^\star-\widehat\Phi_\theta^\star}_{\calL_1,\row}
  \le
  C_{\rm row}
  \left(\delta_\kappa^\kappa+\delta_{\bar\kappa}^{\bar\kappa}+\delta_T^T\right).
\end{equation}
The same rates can be chosen so that
\begin{equation}\label{eq:tail-impl-kbarT-col}
  \norm{\Phi^\star-\widehat\Phi_\theta^\star}_{\calL_1,\col}
  \le
  C_{\rm col}
  \left(\delta_\kappa^\kappa+\delta_{\bar\kappa}^{\bar\kappa}+\delta_T^T\right).
\end{equation}
With \(\Delta_\theta=\calZ\widehat\Phi_\theta^\star-I\),
\begin{equation}\label{eq:residual-impl-kbarT-row}
  \norm{\Delta_\theta}_{\calL_1,\row}
  \le
  C_\Delta
  \left(\delta_\kappa^\kappa+\delta_{\bar\kappa}^{\bar\kappa}+\delta_T^T\right),
\end{equation}
and the constants can be enlarged so that
\begin{equation}\label{eq:residual-impl-kbarT-schur}
  \sqrt{\norm{\Delta_\theta}_{\calL_1,\row}\norm{\Delta_\theta}_{\calL_1,\col}}
  \le
  C_{\rm sg}
  \left(\delta_\kappa^\kappa+\delta_{\bar\kappa}^{\bar\kappa}+\delta_T^T\right).
\end{equation}
\end{lemma}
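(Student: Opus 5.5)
We prove the lemma by sorting the discarded coefficients into three disjoint families, bounding each family with the pointwise cone bound of Theorem~\ref{thm:physical-light-cone}, and summing over shells with Lemma~\ref{lem:row-slice-tail-from-growth}.

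\emph{FIR and stability.} By \eqref{eq:impl-truncation-filters}, $z(\widehat\Phi^\star_\theta)_x-I$ and $z(\widehat\Phi^\star_\theta)_u$ have nonzero taps only at $\tau\le T$. So $\widehat\Phi^\star_\theta$ is FIR with response indices $1,\dots,T+1$. Each block row of each tap contains at most $\sum_{d\le\bar\kappa}p(d)<\infty$ nonzero blocks. Each block is bounded by Theorem~\ref{thm:physical-light-cone}. Hence the truncation is stable, and its $\calL_1$ norms are finite.

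\emph{Tail bounds \eqref{eq:tail-impl-kbarT}--\eqref{eq:tail-impl-kbarT-col}.} The difference $\Phi^\star-\widehat\Phi^\star_\theta$ consists of the blocks $\Phi^\star[\tau+1]_{ij}$ with $(\tau,i,j)\notin\calM_\theta$; the leading $I$ is preserved. Such an index satisfies at least one of the following:
\begin{enumerate}[label=(\roman*)]
\item $\tau>T$;
\item $\tau\le T$ and $d>\bar\kappa$, where $d=d_{\calG}(i,j)$;
\item $\tau\le T$ and $\kappa+v_{\rm cone}\tau<d\le\bar\kappa$.
\end{enumerate}
Theorem~\ref{thm:physical-light-cone} gives the pointwise bound $C_{\rm lc}\rho_{\rm lc}^{\,c' s(\tau,d)}$, where $c'=\min\{1,c_{\rm lc}v_{\rm cone}\}$ and $s(\tau,d)=\max\{\tau,d/v_{\rm cone}\}$. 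The three cases give lower bounds on this index:
\begin{itemize}
\item in case (i), $s\ge T$;
\item in case (ii), $s\ge\bar\kappa/v_{\rm cone}$;
\item in case (iii), $d-v_{\rm cone}\tau>\kappa$, so $\tau+(d-v_{\rm cone}\tau)/v_{\rm cone}\ge\kappa/v_{\rm cone}$, and again $s\ge \kappa/v_{\rm cone}$.
\end{itemize}
In each case we sum over a fixed block row, or a fixed block column, using the shell count $h(q)$. Lemma~\ref{lem:row-slice-tail-from-growth} with $\rho=\rho_{\rm lc}^{c'}$ then gives the bounds $M_\delta\delta^{\lfloor T\rfloor}$, $M_\delta\delta^{\lfloor\bar\kappa/v_{\rm cone}\rfloor}$ and $M_\delta\delta^{\lfloor\kappa/v_{\rm cone}\rfloor}$. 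Set $\delta_T=\delta$ and $\delta_\kappa=\delta_{\bar\kappa}=\delta^{1/v_{\rm cone}}$, and absorb the floors into the constant $C_{\rm row}$. Since $d_{\calG}$ is symmetric, the same count gives the column bound with identical rates. Frobenius and induced norms differ only by $d_{\rm blk}$ factors.

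\emph{Residual bounds.} From \eqref{eq:exact-section-conic-residual}, $\Delta_\theta=-\calZ(\Phi^\star-\widehat\Phi^\star_\theta)$. Lemma~\ref{lem:sls-affine-bound} gives $\norm{\Delta_\theta}_{\calL_1,\row}\le L_{\rm sls}\norm{\Phi^\star-\widehat\Phi^\star_\theta}_{\calL_1,\row}$, and the same with $\col$. This yields \eqref{eq:residual-impl-kbarT-row} with $C_\Delta=L_{\rm sls}C_{\rm row}$. The geometric mean of the row and column bounds then gives \eqref{eq:residual-impl-kbarT-schur} with $C_{\rm sg}=L_{\rm sls}\sqrt{C_{\rm row}C_{\rm col}}$.

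\emph{Main obstacle.} The delicate step is case (iii). Theorem~\ref{thm:physical-light-cone} uses the cone $d\le v_{\rm cone}t$ measured in response index $t=\tau+1$, whereas the mask uses filter tap $\tau$. This shift by one costs only a factor $\rho_{\rm lc}^{-c_{\rm lc}v_{\rm cone}}$ in the constant. A second point is that Lemma~\ref{lem:row-slice-tail-from-growth} counts blocks in unit shells of $s$. Restricted to case (iii), these shells still contain at most $h(q)$ blocks, so applying the lemma there is legitimate.
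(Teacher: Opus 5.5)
Your proof is correct and follows the same route as the paper's proof. Both rewrite Theorem~\ref{thm:physical-light-cone} through $s(\tau,d)$ and note that the three discarded sets force $s\ge T$, $s>\bar\kappa/v_{\rm cone}$, or $s>\kappa/v_{\rm cone}$. Both then sum over shells with Lemma~\ref{lem:row-slice-tail-from-growth}, put the $1/v_{\rm cone}$ powers into the spatial rates, and apply Lemma~\ref{lem:sls-affine-bound} to $\Delta_\theta=-\calZ(\Phi^\star-\widehat\Phi^\star_\theta)$.

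One correction to your closing paragraph: there is no shift by one to pay for. Theorem~\ref{thm:physical-light-cone} bounds $\Phi^\star[t+1]$ with an exponent in $t$, so $t$ already equals the filter tap $\tau$. That is exactly how you used the bound in the body of your argument.
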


\begin{proof}
The truncation keeps finitely many taps and retains \((\widehat\Phi_{\theta,x}^{\star})[1]=I\), so it is stable and FIR.  Write \(d=d_{\calG}(i,j)\).  By Theorem~\ref{thm:physical-light-cone} and \eqref{eq:physical-decay-index}, every state or input response block at implementation tap \(\tau\) is bounded by \(C_{\rm lc}\rho_{\rm lc}^{s(\tau,d)}\).  A block is discarded only if
\[
  \tau>T,
  \qquad d>\bar\kappa,
  \qquad\text{or}\qquad
  d>\kappa+v_{\rm cone}\tau.
\]
These conditions imply, respectively,
\[
  s(\tau,d)>T,
  \quad s(\tau,d)>\bar\kappa/v_{\rm cone},
  \quad s(\tau,d)>\kappa/v_{\rm cone}.
\]
Apply Lemma~\ref{lem:row-slice-tail-from-growth} and sum the bounds for the three discarded sets.  Integer rounding is absorbed into the constants.  Taking powers \(1/v_{\rm cone}\) into the spatial rates gives \eqref{eq:tail-impl-kbarT}; the column count gives \eqref{eq:tail-impl-kbarT-col}.

Let \(E_\theta=\Phi^\star-\widehat\Phi_\theta^\star\).  Since \(\calZ\Phi^\star=I\),
\[
  \Delta_\theta=-\calZ E_\theta.
\]
Lemma~\ref{lem:sls-affine-bound} applied to \eqref{eq:tail-impl-kbarT} and \eqref{eq:tail-impl-kbarT-col} proves \eqref{eq:residual-impl-kbarT-row} and the analogous column bound.  For every frequency,
\[
  \norm{\Delta_\theta(e^{\mathrm i\omega})}_2
  \le
  \sqrt{\norm{\Delta_\theta}_{\calL_1,\row}
  \norm{\Delta_\theta}_{\calL_1,\col}}.
\]
Taking the supremum over \(\omega\) and enlarging the constant proves \eqref{eq:residual-impl-kbarT-schur}.
\end{proof}

\begin{proof}[Proof of Theorem~\ref{thm:tail}]
The row tail bound \eqref{eq:row-tail-impl-kbarT-main} is \eqref{eq:tail-impl-kbarT}.  Apply \eqref{eq:weighted-l1-to-h2} to \(E=\Phi^\star-\widehat\Phi_\theta^\star\) and use \eqref{eq:tail-impl-kbarT} to obtain \eqref{eq:weighted-tail-impl-kbarT}.  Assumption~\ref{ass:lqr-local} bounds \(\norm{\bW}\), so all fixed factors are absorbed into \(C_{\rm tail}\).
\end{proof}

\subsection{Proofs for direct truncation and exact localized SLS cost gaps}\label{app:exact-repair-proofs}

\begin{lemma}[LQR orthogonality identity]\label{lem:lqr-orthogonality}
For every stable response pair \(\Phi\) satisfying \(\calZ\Phi=I\),
\begin{equation}\label{eq:pythagorean-cost}
  J_N(\Phi)-J_N^\star
  =
  \frac12\norm{\bW(\Phi-\Phi^\star)}_{\barHtwo,N}^2 .
\end{equation}
\end{lemma}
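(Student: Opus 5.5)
The plan is to expand the quadratic cost around \(\Phi^\star\) and show that the cross term vanishes by first-order optimality of \(\Phi^\star\) over the affine set \(\{\Phi:\calZ\Phi=I\}\). Write \(E:=\Phi-\Phi^\star\). Since both \(\Phi\) and \(\Phi^\star\) satisfy the achievability equality and are stable and strictly proper, \(E\in z^{-1}\RH_\infty\) and \(\calZ E=0\). Expanding \eqref{eq:JN-phi-def} gives
\[
  J_N(\Phi)=J_N^\star+\frac1N\langle \bW\Phi^\star,\bW E\rangle_{\mathcal H_2}
  +\frac12\norm{\bW E}_{\barHtwo,N}^2 ,
\]
where \(\langle F,G\rangle_{\mathcal H_2}=\sum_t\operatorname{trace}(F[t]\trans G[t])\). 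Thus it suffices to show that the cross term is zero for every stable strictly proper \(E\) with \(\calZ E=0\).

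For the cross term I would use the closed-loop interpretation rather than abstract convexity, since convexity alone only gives \(\ge0\) and we need equality. First, \(\calZ E=0\) with \(E=\operatorname{col}(E_x,E_u)\) means \(E_x=(zI-\bA)^{-1}\bB E_u\) as a stable map, so \(E\) is the state/input response to a \emph{stable input perturbation} \(E_u\). Second, \(\Phi^\star\) is generated by the optimal static feedback \(\bu=-\bK^\star\bx\), so \(\Phi_x^\star=(zI-\bA+\bB\bK^\star)^{-1}\) and \(\Phi_u^\star=-\bK^\star\Phi_x^\star\). With this, the cross term becomes, column by column (disturbance coordinate \(e_k\)), the directional derivative of the infinite-horizon LQR cost from initial state \(x_0=e_k\) along the admissible trajectory variation \((E_x e_k,E_u e_k)\) shifted by one step. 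The variation starts from zero state, since \(E_x[1]=\Phi_x[1]-\Phi_x^\star[1]=I-I=0\). The standard first-order optimality of the LQR trajectory (equivalently, the costate identity \(\lambda(t)=\bP^\star x^\star(t)\) with \(\bR u^\star+\bB\trans\bP^\star x^\star(t+1)=0\)) implies that this derivative vanishes for every square-summable admissible variation with zero initial state. Concretely, I would sum by parts using \(\lambda(t-1)=\bQ x^\star(t)+\bA\trans\lambda(t)\) and \(\bR u^\star(t)=-\bB\trans\lambda(t)\). The telescoping boundary terms vanish at \(t=0\) because the initial variation is zero, and at \(t\to\infty\) because \(x^\star\) and \(\lambda\) decay geometrically (\(\bP^\star\) is stabilizing) while the variation is in \(\ell_2\). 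Summing over \(k\) and dividing by \(N\) gives zero, and the identity \eqref{eq:pythagorean-cost} follows.

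The main obstacle is making the boundary term at infinity rigorous for general stable \(E\), not merely FIR \(E\). The bound \(\abs{\lambda(T)\trans \delta x(T)}\le\norm{\bP^\star}\norm{x^\star(T)}\norm{\delta x(T)}\to0\) suffices, using the geometric decay of \(x^\star\) from Lemma~\ref{lem:riccati-existence} and \(\delta x\in\ell_2\). One could alternatively argue via a frequency-domain spectral factorization, but I would try the time-domain costate summation first. Finiteness of all terms is automatic because both \(\Phi\) and \(\Phi^\star\) lie in \(z^{-1}\RH_\infty\).
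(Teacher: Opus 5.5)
Your proof is correct. It uses the same decomposition as the paper, $J_N(\Phi)=J_N^\star+\frac1N\langle\bW\Phi^\star,\bW E\rangle_{\mathcal H_2}+\frac12\norm{\bW E}_{\barHtwo,N}^2$, but shows that the cross term vanishes by a different argument.

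\textbf{How the paper does it.} The paper uses the abstract argument you set aside. The feasible set $\{\Phi\in z^{-1}\RH_\infty:\calZ\Phi=I\}$ is affine, so $\Phi^\star+sE$ is feasible for every real $s$. The quadratic $s\mapsto J_N(\Phi^\star+sE)$ is therefore minimized at $s=0$, and its derivative there, which is exactly the cross term, must vanish. Your remark that convexity only gives $\ge0$ applies to one-sided feasible directions. Here $-E$ is admissible as well, so first-order optimality already yields equality.

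\textbf{How yours does it.} You verify the orthogonality directly from the Riccati structure: the costate identities, telescoping with $\delta x(0)=E_x[1]e_k=0$, and the boundary term at infinity. For that boundary term, $x^\star(t)\to0$ together with $\delta x\in\ell_2$ already suffices.

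\textbf{What each route buys.} The paper's argument is one line and uses no LQR structure, but it presupposes that an SLS minimizer $\Phi^\star$ exists. Yours needs the stabilizing Riccati solution of Lemma~\ref{lem:riccati-existence} and the identification of $\Phi^\star$ with the closed-loop response of $\bu=-\bK^\star\bx$. In return, it can actually prove that identification, but you should add the closing step explicitly, since the lemma refers to $\Phi^\star$ as an SLS optimizer:
\begin{enumerate}[label=(\roman*)]
  \item Your identity shows that the LQR response $\Phi^{\rm LQR}$ minimizes $J_N$ over all feasible $\Phi$.
  \item Any other minimizer satisfies $\bR^{1/2}(\Phi_u-\Phi_u^{\rm LQR})=0$. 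Since $\bR\succ0$, it has the same $\Phi_u$.
  \item By the recursion $\Phi_x[t+1]=\bA\Phi_x[t]+\bB\Phi_u[t]$ with $\Phi_x[1]=I$, it then has the same $\Phi_x$, so $\Phi^\star=\Phi^{\rm LQR}$.
\end{enumerate}
With this step, your argument also supplies the existence, uniqueness, and Riccati characterization of $\Phi^\star$. The paper invokes these elsewhere without separate proof, for instance in \eqref{eq:traceP-cost-equivalence} and in the KKT sensitivity interpretation of $\Phi^\star$.
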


\begin{proof}
Let \(E=\Phi-\Phi^\star\).  Both responses satisfy the affine SLS equation, so \(\calZ E=0\), and \(\Phi^\star+sE\) is feasible for every scalar \(s\).  First-order optimality of \(\Phi^\star\) therefore gives
\[
  \left\langle\bW\Phi^\star,\bW E\right\rangle_{\barHtwo,N}=0.
\]
Expanding \(\norm{\bW(\Phi^\star+E)}_{\barHtwo,N}^2\) in \eqref{eq:JN-phi-def} proves \eqref{eq:pythagorean-cost}.  The standard state-feedback SLS theorem ensures that every stable solution of \(\calZ\Phi=I\) has an internally stabilizing realization \cite{wang2019system,anderson2019sls}.
\end{proof}

\subsubsection{Small gain proof for direct truncation}\label{app:direct-truncation-proof}
\begin{proof}[Proof of Theorem~\ref{thm:direct-truncation-small-gain}]
Equation \eqref{eq:residual-impl-kbarT-schur} gives
\[
  \varepsilon:=\norm{\Delta_\theta}_{\mathcal H_\infty}
  \le
  C_{\rm sg}(\delta_\kappa^\kappa+\delta_{\bar\kappa}^{\bar\kappa}+\delta_T^T).
\]
If \(\varepsilon\le\alpha_{\rm sg}<1\), set \(R_\theta=(I+\Delta_\theta)^{-1}\).  The Neumann series gives
\[
  R_\theta\in\RH_\infty,
  \qquad
  \norm{R_\theta}_{\mathcal H_\infty}
  \le(1-\varepsilon)^{-1}.
\]
The truncated realization therefore has achieved response
\[
  \widehat\Phi_\theta^{\rm cl}
  =
  \widehat\Phi_\theta^\star R_\theta,
\]
which is stable and satisfies \(\calZ\widehat\Phi_\theta^{\rm cl}=(I+\Delta_\theta)R_\theta=I\).  The achieved response pair is generally dense and is not an exact localized response.  Let \(E_\theta=\widehat\Phi_\theta^\star-\Phi^\star\).  Since \(R_\theta-I=-\Delta_\theta R_\theta\),
\begin{equation}\label{eq:direct-achieved-error-decomposition}
  \widehat\Phi_\theta^{\rm cl}-\Phi^\star
  =E_\theta R_\theta-\Phi^\star\Delta_\theta R_\theta.
\end{equation}
The \(\mathcal H_2\)-\(\mathcal H_\infty\) product inequality now gives
\[
  \norm{\bW(\widehat\Phi_\theta^{\rm cl}-\Phi^\star)}_{\barHtwo,N}
  \le
  \frac{E_{\rm tail}(\theta)+\norm{\bW\Phi^\star}_{\barHtwo,N}\varepsilon}{1-\varepsilon}.
\]
By \eqref{eq:traceP-cost-equivalence}, \(\norm{\bW\Phi^\star}_{\barHtwo,N}^2=2J_N^\star\le d_{\rm blk}L_P\).  Theorem~\ref{thm:tail} and \eqref{eq:residual-impl-kbarT-schur} therefore bound the right hand side by a constant times \(\delta_\kappa^\kappa+\delta_{\bar\kappa}^{\bar\kappa}+\delta_T^T\), while \(1-\varepsilon\ge1-\alpha_{\rm sg}\).  Apply Lemma~\ref{lem:lqr-orthogonality} and \((a+b+c)^2\le3(a^2+b^2+c^2)\) to obtain \eqref{eq:direct-truncation-exp-gap}.  The lower bound follows from centralized optimality.
\end{proof}

\subsubsection{Exact implementation and cost}\label{app:exact-implementation-cost}

\begin{proof}[Proof of Proposition~\ref{prop:uniform-fir-repair}]
Because \(A_d^q=0\), \((zI-A_d)^{-1}=\sum_{s=0}^{q-1}A_d^s z^{-(s+1)}\).  Since \(\mathcal R_u(D)=-K_d\mathcal R_x(D)\),
\[
\calZ\mathcal R(D)
=(zI-\bA+\bB K_d)(zI-A_d)^{-1}D=D.
\]
If \(D\) is strictly proper and FIR, the finite sum in \eqref{eq:constructive-repair-operator} is also strictly proper and FIR.  Multiplication by \(A_d^s\) expands the graph support by at most \(sr_d\), multiplication by \(K_d\) adds at most \(r_K\), and \(z^{-(s+1)}\) shifts the tap index by at most \(q\).  The largest spatial expansion is therefore \((q-1)r_d+r_K\).

For \(\diamond\in\{\row,\col\}\), the product inequalities give
\[
  \norm{\mathcal R(D)}_{\calL_1,\diamond}
  \le
  \left(1+\norm{K_d}_{\calL_1,\diamond}\right)
  \sum_{s=0}^{q-1}\norm{A_d^s}_{\calL_1,\diamond}
  \norm{D}_{\calL_1,\diamond}.
\]
Both prefactors are bounded by \((1+L_d)qL_d\) under Assumption~\ref{ass:local-deadbeat}.  Their maximum defines \(C_{\rm rep}\), which is independent of \(N\), and proves both norm estimates.
\end{proof}

\begin{proof}[Proof of Theorem~\ref{thm:conic-exact-local}]
Let \(r_0\) be the plant coupling range in Assumption~\ref{ass:lqr-local} and take
\[
c_{\rm sp}:=r_0+(q-1)r_d+r_K,
\qquad c_T:=q+1.
\]
For \(\theta=(\kappa,\bar\kappa,T)\) satisfying the stated lower bounds, define the eroded parameter triple
\[
  \theta^-:=(\kappa-c_{\rm sp},\bar\kappa-c_{\rm sp},T-c_T)
\]
and set \(\Psi:=\widehat\Phi_{\theta^-}^\star\), \(D:=\calZ\Psi-I\).  Let \(G_x=z\Psi_x-I\) and \(G_u=z\Psi_u\).  Then
\begin{equation}\label{eq:residual-filter-expansion}
  D=G_x-z^{-1}\bA(G_x+I)-z^{-1}\bB G_u.
\end{equation}
Because \(G_x[0]=0\), the residual \(D\) is strictly proper.

\emph{Residual generation.}
A coefficient at tap \(\tau\) of \(G_x\) or \(G_u\) contributes through \eqref{eq:residual-filter-expansion} to \(D\) at tap \(\tau\) or \(\tau+1\), with an increase in spatial radius of at most \(r_0\).  The standalone term \(-z^{-1}\bA\) occurs at tap one and has spatial radius at most \(r_0\).

\emph{Repair expansion.}
For \(0\le s\le q-1\), multiplication by \(A_d^s z^{-(s+1)}\) in \eqref{eq:constructive-repair-operator}, followed by conversion to the realization filters \(z\mathcal R_x(D)\) and \(z\mathcal R_u(D)\), adds at most \(q-1\) taps and \((q-1)r_d+r_K\) graph hops to a residual coefficient.  Relative to the originating coefficient of \(G_x\) or \(G_u\), every repaired filter coefficient therefore has a tap \(\tau'\) satisfying
\[
  \tau\le\tau'\le\tau+q
\]
and an increase in spatial radius of at most \(c_{\rm sp}\).

\emph{Containment in \(\calM_\theta\).}
Every originating coefficient of \(G_x\) or \(G_u\) lies in the eroded mask, so its repaired descendants have graph distance at most
\[
\min\{\bar\kappa-c_{\rm sp},\kappa-c_{\rm sp}+v_{\rm cone}\tau\}+c_{\rm sp}
\le \min\{\bar\kappa,\kappa+v_{\rm cone}\tau'\}.
\]
Since the eroded horizon is \(T-c_T\), the choice \(c_T=q+1\) places these coefficients within the horizon \(T\).  After repair, the standalone term \(-z^{-1}\bA\) has tap at most \(q\) and spatial radius at most \(c_{\rm sp}\), and hence also lies in \(\calM_\theta\).  Thus
\[
\Phi_\theta^{\rm rep}:=\Psi-\mathcal R(D)
\in\calS_\theta^{\rm impl},
\qquad
\calZ\Phi_\theta^{\rm rep}=I.
\]
This proves \(\calF_\theta^{\rm impl}\ne\emptyset\).  Proposition~\ref{prop:uniform-fir-repair}, \eqref{eq:sls-affine-bounded}, and the triangle inequality give
\[
\norm{\Phi_\theta^{\rm rep}-\Phi^\star}_{\calL_1,\row}
\le
(1+C_{\rm rep}L_{\rm sls})
\norm{\widehat\Phi_{\theta^-}^\star-\Phi^\star}_{\calL_1,\row}.
\]
Since \(J_\theta^{\rm loc}\le J_N(\Phi_\theta^{\rm rep})\), the Pythagorean identity \eqref{eq:pythagorean-cost}, the norm bridge \eqref{eq:weighted-l1-to-h2}, and Theorem~\ref{thm:tail} yield the upper bound.  Replacing \((\kappa,\bar\kappa,T)\) by the corresponding erosions with fixed offsets changes only the prefactor, and \((a+b+c)^2\le3(a^2+b^2+c^2)\) gives the stated rates.  The lower bound follows from centralized optimality.
\end{proof}

\subsubsection{Resource law independent of network size}\label{app:resource-law-proof}

\begin{proof}[Proof of Corollary~\ref{cor:accuracy-resources}]
Let
\[
  \lambda:=\min\{-\log\delta_\kappa,
  -\log\delta_{\bar\kappa},-\log\delta_T\}>0,
\]
where the rates are enlarged, if necessary, so that the residual and direct cost estimates use the same three rates.  Choose \(m_\varepsilon\in\Z_{\ge0}\) such that
\begin{equation}\label{eq:explicit-resource-parameter-choice}
  m_\varepsilon\ge
  \max\left\{
  \frac{1}{\lambda}\log\frac{3C_{\rm sg}}{\alpha_{\rm sg}},
  \frac{1}{2\lambda}\log\frac{3C_{\rm cl}}{\varepsilon}
  \right\}.
\end{equation}
With \(\kappa_\varepsilon=\bar\kappa_\varepsilon=T_\varepsilon=m_\varepsilon\), Lemma~\ref{lem:aux-tail-residual-estimates} gives \(\norm{\Delta_{\theta_\varepsilon}}_{\mathcal H_\infty}\le\alpha_{\rm sg}\), and Theorem~\ref{thm:direct-truncation-small-gain} gives \eqref{eq:accuracy-direct-gap}.  Taking the ceiling in \eqref{eq:explicit-resource-parameter-choice} produces constants \(c_0,c_1\), independent of \(N\), for which \eqref{eq:accuracy-parameter-scaling} holds.

Under Assumption~\ref{ass:local-deadbeat}, let \(\lambda_{\rm ex}>0\) be the minimum of the negative logarithms of the three rates in Theorem~\ref{thm:conic-exact-local}, and define
\begin{align*}
  m_\varepsilon^J&:=
  \left\lceil(2\lambda_{\rm ex})^{-1}\log(3C_J/\varepsilon)\right\rceil,\\
  m_\varepsilon^{\rm ex}&:=
  \max\{m_\varepsilon,c_{\rm sp},c_T,m_\varepsilon^J\}.
\end{align*}
Setting all three control architecture parameters to \(m_\varepsilon^{\rm ex}\) gives exact feasibility and \eqref{eq:accuracy-exact-gap}.  Fixed margins and constants only change \(c_0,c_1\).

Finally, the polynomial ball bound and \(d_{\max}(\tau)\le\bar\kappa_\varepsilon\) give
\[
  C_{\rm halo}\le C_{\rm gr}(1+\kappa_\varepsilon)^D,
  \quad
  C_{\rm foot}\le C_{\rm gr}(1+\bar\kappa_\varepsilon)^D,
\]
and
\[
  C_{\rm tap}
  \le (T_\varepsilon+1)C_{\rm gr}(1+\bar\kappa_\varepsilon)^D.
\]
Substitution of \eqref{eq:accuracy-parameter-scaling} proves \eqref{eq:accuracy-resource-scaling}.
\end{proof}

\end{document}

%% file: Figures/spacetime_decay_tikz.tex
\begin{tikzpicture}[
  >=Latex,
  font=\footnotesize,
  plane/.style={draw=black!27, line width=0.45pt, rounded corners=3pt, fill=black!1},
  stedge/.style={draw=black!80, line width=0.35pt, line cap=round},
  base node/.style={circle, fill=black, inner sep=0pt, minimum size=3.5pt},
  hopone/.style={circle, draw=#1!85!black, fill=#1!75, line width=0.25pt, inner sep=0pt, minimum size=3.3pt},
  hoptwo/.style={circle, draw=#1!45!black, fill=#1!28, line width=0.20pt, inner sep=0pt, minimum size=2.9pt},
  source ring/.style={circle, fill=none, line width=1.15pt, minimum size=11pt},
  channel/.style={densely dashed, line width=0.45pt, line cap=round},
  fan/.style={densely dashed, line width=0.28pt, line cap=round},
  column/.style={line width=5.2pt, line cap=round},
]
\newcommand{\STCoordinates}[1]{%
  \foreach \n/\x/\y in {
    0/-10.5/3.75,1/-9.25/6.25,2/-9.5/1.25,3/-6.25/7,
    4/-6.25/2,5/-4.25/3.75,6/-4/6,7/-5.5/9.25,
    8/-1.5/7.25,9/-1.75/9.5,10/1/6.75,11/-11/7.5,
    12/-11/9.75,13/-13.75/6.75,14/-11.5/-1,15/-13.5/0.5,
    16/-13.5/-3.75,17/-8.75/-1.75,18/-8.25/-4,19/-2/0.75,
    20/-1.25/3.75,21/1/2.25,22/-1.25/-2.25,23/2/-2.5,
    24/3.25/0.5,25/-3.25/-4.25,26/5.5/1.5,27/6/4.25}{%
    \coordinate (#1\n) at (\x,\y);
  }%
}
\newcommand{\STEdges}[1]{%
  \foreach \a/\b in {
    1/3,1/0,12/11,11/13,11/1,7/3,3/6,6/5,5/4,4/2,
    2/0,2/14,14/16,14/15,14/17,17/18,4/19,5/20,
    20/21,19/21,19/22,22/23,23/24,24/21,24/26,
    26/27,6/8,8/10,8/9,22/25}{%
    \draw[stedge] (#1\a) -- (#1\b);
  }%
}
\newcommand{\STBaseNodes}[1]{%
  \foreach \n in {0,...,27}{\node[base node] at (#1\n) {};}%
}
\newcommand{\STHopHighlights}[3]{%
  \begin{scope}[opacity=#3, transparency group]
    \foreach \a/\b in {2/0,2/4,2/14}{%
      \draw[red, line width=8.0pt, line cap=round] (#1\a) -- (#1\b);
    }%
    \foreach \a/\b in {21/20,21/19,21/24}{%
      \draw[blue, line width=8.0pt, line cap=round] (#1\a) -- (#1\b);
    }%
  \end{scope}
  \begin{scope}[opacity=#2, transparency group]
    \foreach \a/\b in {0/1,4/5,4/19,14/15,14/16,14/17}{%
      \draw[red, line width=12.0pt, line cap=round] (#1\a) -- (#1\b);
    }%
    \foreach \a/\b in {20/5,19/4,19/22,24/23,24/26}{%
      \draw[blue, line width=12.0pt, line cap=round] (#1\a) -- (#1\b);
    }%
  \end{scope}
}
\newcommand{\STColoredNodes}[3]{%
  \foreach \n in {0,4,14}{\node[hopone=red, opacity=#3] at (#1\n) {};}%
  \foreach \n in {1,5,15,16,17,19}{\node[hoptwo=red, opacity=#2] at (#1\n) {};}%
  \foreach \n in {20,19,24}{\node[hopone=blue, opacity=#3] at (#1\n) {};}%
  \foreach \n in {4,5,22,23,26}{\node[hoptwo=blue, opacity=#2] at (#1\n) {};}%
  \node[circle, fill=red!80!black, inner sep=0pt, minimum size=3.6pt, opacity=#3] at (#12) {};
  \node[circle, fill=blue!80!black, inner sep=0pt, minimum size=3.6pt, opacity=#3] at (#121) {};
}
\newcommand{\STLayer}[5]{%
  \begin{scope}[shift={(0,0,#2)}, scale=#3]
    \draw[plane]
      (-17.1,-6.2) -- (9.3,-6.2) -- (2.1,11.1) -- (-17.9,11.1) -- cycle;
    \STCoordinates{#1}
    \STHopHighlights{#1}{#4}{#5}
    \STEdges{#1}
    \STBaseNodes{#1}
    \STColoredNodes{#1}{#4}{#5}
  \end{scope}
}

\begin{scope}[
  shift={(3.25,0.00)},
  x={(0.43cm,0.00cm)},
  y={(0.10cm,0.115cm)},
  z={(-0.28cm,2.16cm)}
]

\STLayer{A}{0.00}{1.00}{0.22}{0.36}
\STLayer{B}{1.20}{0.86}{0.15}{0.26}
\STLayer{C}{2.05}{0.74}{0.10}{0.18}

\foreach \a/\b/\opone/\optwo in {A/B/0.145/0.080,B/C/0.105/0.060}{%
  \foreach \n in {2,0,4,14}{%
    \draw[red!70, opacity=\opone, column] (\a2) -- (\b\n);
  }%
  \foreach \n in {1,5,15,16,17,19}{%
    \draw[red!45, opacity=\optwo, column] (\a2) -- (\b\n);
  }%
  \foreach \n in {21,20,19,24}{%
    \draw[blue!70, opacity=\opone, column] (\a21) -- (\b\n);
  }%
  \foreach \n in {4,5,22,23,26}{%
    \draw[blue!45, opacity=\optwo, column] (\a21) -- (\b\n);
  }%
}

\foreach \a/\b/\opone/\optwo in {A/C/0.050/0.026}{%
  \foreach \n in {2,0,4,14}{%
    \draw[red!65, opacity=\opone, column] (\a2) -- (\b\n);
  }%
  \foreach \n in {1,5,15,16,17,19}{%
    \draw[red!40, opacity=\optwo, column] (\a2) -- (\b\n);
  }%
  \foreach \n in {21,20,19,24}{%
    \draw[blue!65, opacity=\opone, column] (\a21) -- (\b\n);
  }%
  \foreach \n in {4,5,22,23,26}{%
    \draw[blue!40, opacity=\optwo, column] (\a21) -- (\b\n);
  }%
}

\foreach \a/\b/\op in {A/B/0.30,B/C/0.22}{%
  \foreach \n in {2,0,4,14}{%
    \draw[red!70, fan, opacity=\op] (\a2) -- (\b\n);
  }%
  \foreach \n in {1,5,15,16,17,19}{%
    \draw[red!45, fan, opacity=\op] (\a2) -- (\b\n);
  }%
  \foreach \n in {21,20,19,24}{%
    \draw[blue!70, fan, opacity=\op] (\a21) -- (\b\n);
  }%
  \foreach \n in {4,5,22,23,26}{%
    \draw[blue!45, fan, opacity=\op] (\a21) -- (\b\n);
  }%
}
\foreach \a/\b/\op in {A/C/0.095}{%
  \foreach \n in {2,0,4,14}{%
    \draw[red!65, fan, opacity=\op] (\a2) -- (\b\n);
  }%
  \foreach \n in {1,5,15,16,17,19}{%
    \draw[red!40, fan, opacity=\op] (\a2) -- (\b\n);
  }%
  \foreach \n in {21,20,19,24}{%
    \draw[blue!65, fan, opacity=\op] (\a21) -- (\b\n);
  }%
  \foreach \n in {4,5,22,23,26}{%
    \draw[blue!40, fan, opacity=\op] (\a21) -- (\b\n);
  }%
}

\draw[red!75, channel, opacity=0.58] (A2) -- (B2);
\draw[red!75, channel, opacity=0.36] (B2) -- (C2);
\draw[blue!75, channel, opacity=0.58] (A21) -- (B21);
\draw[blue!75, channel, opacity=0.36] (B21) -- (C21);

\node[source ring, draw=red!85] at (A2) {};
\node[red!82, font=\large, fill=white, fill opacity=0.80, text opacity=1,
  inner sep=0.5pt, xshift=-12pt, yshift=-14pt] at (A2) {\(i\)};
\node[source ring, draw=blue!82] at (A21) {};
\node[blue!82, font=\large, fill=white, fill opacity=0.80, text opacity=1,
  inner sep=0.5pt, xshift=12pt, yshift=-14pt] at (A21) {\(j\)};

\end{scope}

\node[font=\large, xshift=58pt, yshift=-2pt] at (A10) {\(t=0\)};
\node[font=\large, xshift=58pt, yshift=-1pt] at (B10) {\(t=1\)};
\node[font=\large, xshift=58pt, yshift=0pt] at (C10) {\(t=2\)};

\draw[-{Latex[length=3.2mm]}, line width=0.75pt] (-4.65,0.10) -- (-3.10,4.95);
\node[font=\small\bfseries, rotate=76] at (-3.98,3.05) {time};

\end{tikzpicture}

%% file: Figures/three_parameter_architecture_tikz.tex
\begin{tikzpicture}[
  >=Latex,
  font=\small,
  axis/.style={black, line width=0.9pt, -{Latex[length=3.0mm]}},
  tick/.style={black, line width=0.8pt},
  support/.style={fill=black!16, draw=none},
  boundary/.style={blue!78, densely dashed, line width=1.18pt},
  measure/.style={red!88, line width=1.0pt, {Latex[length=2.6mm]}-{Latex[length=2.6mm]}},
  note/.style={align=center, text=black!86}
]
\def\tzero{1.25}
\def\tone{1.86}
\def\tsat{2.56}
\def\tend{4.55}
\def\ykappa{0.56}
\def\ykbar{1.42}
\def\yaxis{-2.30}

\draw[axis] (0,\yaxis) -- (6.05,\yaxis);
\draw[axis] (0,\yaxis) -- (0,2.48);
\node[font=\large, anchor=west] at (5.15,\yaxis-0.45) {Time};
\node[font=\large, anchor=east] at (-0.22,2.26) {Space};

\foreach \y/\lab in {
  \ykbar/{x_{i+\bar\kappa}},
  \ykappa/{x_{i+\kappa}},
  0/{x_i},
  -\ykappa/{x_{i-\kappa}},
  -\ykbar/{x_{i-\bar\kappa}}
}{
  \draw[tick] (-0.18,\y) -- (0.18,\y);
  \node[anchor=east] at (-0.32,\y) {\(\lab\)};
}

\foreach \x/\lab in {
  \tzero/{t},
  \tone/{t+1},
  \tend/{t+T}
}{
  \draw[tick] (\x,\yaxis-0.15) -- (\x,\yaxis+0.15);
  \node[anchor=north] at (\x,\yaxis-0.25) {\(\lab\)};
}

\path[support]
  (\tzero,\ykappa) --
  (\tsat,\ykbar) --
  (\tend,\ykbar) --
  (\tend,-\ykbar) --
  (\tsat,-\ykbar) --
  (\tzero,-\ykappa) --
  cycle;
\draw[boundary] (\tzero,\ykappa) -- (\tzero,-\ykappa);
\draw[boundary] (\tzero,\ykappa) -- ({\tsat+0.58},{\ykbar+0.38});
\draw[boundary] (\tzero,-\ykappa) -- ({\tsat+0.58},{-\ykbar-0.38});
\draw[boundary] ({\tsat-0.62},\ykbar) -- ({\tend+0.66},\ykbar);
\draw[boundary] ({\tsat-0.62},-\ykbar) -- ({\tend+0.66},-\ykbar);
\draw[boundary] (\tend,{\ykbar+0.70}) -- (\tend,{-\ykbar-0.70});

\draw[black!35, densely dotted, line width=0.55pt] (\tzero,0) -- (\tend,0);
\node[note] at (3.15,0.0) {Allowed filter\\support};

\fill[red!88] (\tzero,0) circle (2.6pt);
\node[red!88, anchor=south west] at ({\tzero+0.10},0.10) {\(\widehat w_i\)};

\draw[measure] (\tzero-0.24,-\ykappa) -- (\tzero-0.24,0);
\node[red!88, anchor=east] at (\tzero-0.38,-0.28) {\(\kappa\)};

\draw[measure] (\tend+0.34,0) -- (\tend+0.34,\ykbar);
\node[red!88, anchor=west] at (\tend+0.50,0.72) {\(\bar\kappa\)};

\draw[measure] (\tzero,\yaxis+0.30) -- (\tend,\yaxis+0.30);
\node[red!88, fill=white, inner sep=1.0pt] at ({0.5*(\tzero+\tend)},\yaxis+0.48)
  {\(T\)};
\end{tikzpicture}

%% file: Figures/kkt_response_graph_definition_tikz.tex
\begin{tikzpicture}[
  x=1cm,
  y=1cm,
  font=\scriptsize,
  state/.style={draw=blue!65!black,rounded corners=1.2pt,minimum width=13mm,minimum height=5.5mm,inner sep=1.5pt,fill=blue!6},
  input/.style={draw=green!45!black,rounded corners=1.2pt,minimum width=13mm,minimum height=5.5mm,inner sep=1.5pt,fill=green!8},
  mult/.style={draw=orange!75!black,rounded corners=1.2pt,minimum width=14mm,minimum height=5.5mm,inner sep=1.5pt,fill=orange!10},
  kktedge/.style={thin,draw=black!65},
  xpath/.style={semithick,draw=blue!65!black},
  upath/.style={semithick,draw=green!45!black},
  response/.style={-{Latex[length=1.5mm]},semithick}
]

\node[anchor=west,font=\scriptsize\bfseries] at (0,3.70) {(a)};

\node[state] (xi) at (0.90,3.05) {$X_i(s)$};
\node[input] (ui) at (0.90,2.15) {$U_i(s)$};
\node[state] (xj) at (3.35,3.05) {$X_j(s)$};
\node[input] (uj) at (3.35,2.15) {$U_j(s)$};
\node[mult] (lami) at (5.70,2.60) {$\Lambda_i(s)$};
\node[state] (xinext) at (8.00,2.60) {$X_i(s+1)$};

\node[draw=black!50,dashed,rounded corners=2pt,fit=(xi)(xj)(ui)(uj)(lami),
  inner xsep=2.5mm,inner ysep=2.5mm] (cell) {};
\node[fill=white,inner sep=0.7pt] at (3.30,3.58) {time $s$};

\draw[kktedge] (xi) -- node[above=0.6mm,fill=white,inner sep=0.5pt]
  {$Q_{ij}\ne0$} (xj);
\draw[kktedge] (ui) -- node[above=0.6mm,fill=white,inner sep=0.5pt]
  {$R_{ij}\ne0$} (uj);
\draw[kktedge] (lami) -- node[pos=0.50,above=0.6mm,fill=white,inner sep=0.5pt]
  {$A_{ij}\ne0$} (xj);
\draw[kktedge] (lami) -- node[pos=0.50,below=0.6mm,fill=white,inner sep=0.5pt]
  {$B_{ij}\ne0$} (uj);
\draw[kktedge] (lami) -- node[pos=0.50,above=0.7mm,fill=white,inner sep=0.5pt]
  {update} (xinext);
\node[anchor=west] at (8.82,2.60) {$\cdots$};

\node[anchor=west,font=\scriptsize\bfseries] at (0,1.35) {(b)};

\node[mult] (source) at (0.85,0.45) {$\Lambda_j(-1)$};
\node[state] (xzero) at (2.55,0.45) {$X_j(0)$};
\node[inner sep=0pt] (xdots) at (4.10,0.82) {$\cdots$};
\node[inner sep=0pt] (udots) at (4.10,0.08) {$\cdots$};
\node[state] (xtarget) at (5.55,0.82) {$X_i(t)$};
\node[input] (utarget) at (5.55,0.08) {$U_i(t)$};

\draw[kktedge] (source) -- node[above=0.9mm,fill=white,inner sep=0.5pt]
  {initial} (xzero);
\draw[xpath] (xzero) -- (xdots) -- (xtarget);
\draw[upath] (xzero) -- (udots) -- (utarget);

\node[anchor=west,text=blue!65!black] (phix) at (6.65,0.82)
  {$\Phi_x^\star[t+1]_{ij}$};
\node[anchor=west,text=green!45!black] (phiu) at (6.65,0.08)
  {$\Phi_u^\star[t+1]_{ij}$};
\draw[response,draw=blue!65!black] (xtarget) -- (phix.west);
\draw[response,draw=green!45!black] (utarget) -- (phiu.west);

\end{tikzpicture}